\date{}
\title{Hausdorff, Large Deviation and Legendre \\Multifractal Spectra of L\'evy Multistable Processes}
\author{R. Le Gu\'evel\\
\small{{\em Universit\'{e} de Rennes 2 - Haute Bretagne, Equipe de Statistique Irmar, UMR CNRS 6625}}\\
\small{{\em Place du Recteur Henri Le Moal, CS 24307, 35043 RENNES Cedex, France}}\\
\small{{\em ronan.leguevel@univ-rennes2.fr}}\\
\small{ and } \\
J. L\'{e}vy V\'{e}hel \\
\small{{\em Regularity team, INRIA Saclay and MAS laboratory}}\\
\small{{\em Ecole Centrale Paris, Grande Voie des Vignes, 92290, Chatenay Malabry, France}}\\
\small{{\em jacques.levy-vehel@inria.fr}}}
\def\bbbr{{\bf R}} % Real numbers
\def\bbbn{{\bf N}} % Natural numbers
\newtheorem{theo}{Theorem}
\newtheorem{note}{Note}
\newtheorem{lem}[theo]{Lemma}
\newcommand\Nb{N_{n}^{\varepsilon}(\beta)}
\newcommand\E{\mbox{\sf E}}
\renewcommand\P{{\sf P}}
\newcommand\h{h_Y(t)}
\newcommand{\one}{\ifmmode {\sf 1}\hspace{-.26em}{\sf
l}\hspace{-.35em}{\sf \_} \else ${\sf 1}\hspace{-.26em}{\sf
l}\hspace{-.35em}{\sf \_}$ \fi}
\newcommand\ho{H\"{o}lder }
\renewcommand{\Box}{\mbox{\rule{1ex}{1ex}}}
\begin{document}
\maketitle

\begin{abstract}

We compute the Hausdorff multifractal spectrum of two versions of multistable L\'evy motions. These processes
extend classical Lévy motion by letting the stability exponent $\alpha$ evolve in time. The spectra provide a
decomposition of $[0,1]$ into an uncountable disjoint union of sets with Hausdorff dimension one. We also compute
the increments-based large deviations multifractal spectrum of the independent increments multistable L\'evy motion.
This spectrum turns out to be concave and thus coincides with the Legendre multifractal spectrum, but it is different
from the Hausdorff multifractal spectrum. The independent increments multistable L\'evy motion
thus provides an example where the strong multifractal formalism does not hold.
\end{abstract}

\section{Introduction and background}

Multifractal analysis gives a fairly complete description of the singularity structure of measures, functions 
or stochastic processes. Various versions of multifractal analysis exist, which include the determinations 
of the so-called Hausdorff, large deviation, and Legendre multifractal spectra \cite{LVV}. 
Multifractal analysis has been performed for various measures \cite{AP,BMP}, functions \cite{Jaff1}, 
and stochastic processes \cite{Bar,BLV,AD,DJ,JLP}.
In the case of Lévy processes, substantially finer results have been obtained in \cite{PB} using 2-microlocal analysis.

This article deals with the multifractal analysis of extensions of Lévy stable motions known as {\it multistable
Lévy motions}. Generally speaking, multistable processes extend the well-known stable processes (see, {\it e.g.} \cite{Bk_Sam}) 
by letting the stability index $\alpha$ evolve in ``time''. These processes have been introduced in
\cite{KFJLV} and have been studied for instance in \cite{AA,HBCL,KFLL,LL2,LGLVL,IMKR}. 
They provide useful models in various applications where 
the data display jumps with varying intensity, such as financial records, EEG or natural terrains:
indeed, multistability is one practical way to deal with (increments-) non-stationarities observed in various real-world
phenomena, since a multistable process $X$ is tangent, at each time $u$, to a stable process $Z_u$ 
in the following sense \cite{F,F2}:
\begin{equation}
\lim_{r \to 0}\frac{X(u+rt) -X(u)}{r^{h}} = Z_{u}(t)
\label{locform1}
\end{equation}
for a suitable $h$ (the limit (\ref{locform1}) is taken either in finite dimensional distributions 
or, when $X$ has a version with càdlàg paths, in distribution - one then speaks of strong localisability).

Without loss of generality, we shall consider our processes on $[0,1]$. We will need the following ingredients:
\begin{itemize}
	\item $\alpha: [0,1] \to (1,2)$ is a $\mathcal{C}^1$ function.
	\item $(\Gamma_i)_{i \geq 1}$ is a sequence of arrival times of a Poisson process with unit arrival time.
	\item $(V_i)_{i \geq 1}$ is a sequence of i.i.d. random variables with uniform distribution on $[0,1]$.
	\item $(\gamma_i)_{i \geq 1}$ is a sequence of i.i.d. random variables with distribution
	$P(\gamma_i=1)=P(\gamma_i=-1)=1/2$.
\end{itemize}
The three sequences $(\Gamma_i)_{i \geq 1}$, $(V_i)_{i \geq 1}$, and $(\gamma_i)_{i \geq 1}$ are independent.
We denote $c=\inf_{u \in [0,1]}\limits \alpha(u)$, $d=\sup_{u \in [0,1]}\limits \alpha(u)$,

\noindent We shall consider two versions of Lévy multistable processes: the first one has 
independent but non stationary increments. It admits the following representation:
\begin{equation}\label{LII}
B(t) = \sum_{i=1}^{+\infty}\limits \gamma_i C_{\alpha(V_i)}^{1/\alpha(V_i)} \Gamma_i^{-1/ \alpha(V_i)} \mathbf{1}_{(V_i \leq t)},
\end{equation}
while the second one has correlated non stationary increments and reads:
\begin{equation}\label{LFB}
D(t)= C_{\alpha(t)}^{1/\alpha(t)}\sum_{i=1}^{\infty} \gamma_i \Gamma_i^{-1/\alpha(t)} \mathbf{1}_{[0,t]}(V_i),
\end{equation}
where  $C_u = \left( \int_{0}^{\infty} x^{-u} \sin x \ dx \right)^{-1}$.
Both processes are semi-martingales and are tangent, at each time $t$, to $\alpha(t)-$stable Lévy motion. 
See \cite{LGLVL} and the references therein for more details on these processes.

We shall also denote: 
 \begin{displaymath}
 Y(t)= \sum_{i=1}^{\infty} \gamma_i \Gamma_i^{-1/\alpha(t)} \mathbf{1}_{[0,t]}(V_i).
 \end{displaymath}

\section{Hausdorff multifractal spectra}
Let $\h$ denote the pointwise H\"{o}lder exponent of $Y$ at $t$. The Hausdorff multifractal analysis of $Y$ consists in
measuring the Hausdorff dimension (denoted $\dim_{H}$) of the sets $F_h = \{t \in [0,1]: \h =h \}$. 
The Hausdorff multifractal spectrum is the function $h \mapsto f_H(h) := \dim_{H} F_h$.

\noindent We will use the following notations:  $S=\cup_i \{V_i\}$, $\mathcal{S} = S^{\mathbf{N}}$ and 
 $\mathcal{R}_t = \{ (r_n)_{n \in \mathbf{N}} \in \mathcal{S}: r_n \rightarrow t\}$. 
 If $(r_n)_n \in \mathcal{R}_t$, we put $V_{\phi(n)}=r_n$.  
 Finally, define the positive function $\delta$ for $t \notin S$,
 
 \begin{displaymath}
 \delta(t) = \inf_{(V_{\phi(n)})_n \in \mathcal{R}_t}\limits \liminf_{i \to \infty}\limits -\frac{\log \phi(i)}{\log |V_{\phi(i)} -t|}.
  \end{displaymath}

\subsection{Main result}
The Hausdorff multifractal spectra of both $B$ and $D$ are described by the following theorem:
\begin{theo}\label{spectre}
With probability one, the common Hausdorff multifractal spectrum $f_H$ of $B$ and $D$ satisfies:
\begin{equation}\label{FH}
f_H(h)=
\begin{cases}
-\infty &\text{for $h<0$};\\
hd &\text{for $h\in [0,\frac{1}{d}]$};\\
1 &\text{for $h\in (\frac{1}{d},\frac{1}{c})$};\\
\dim_{H}\left( \{ t \in [0,1] : \alpha(t) =c \} \right) &\text{for $h=\frac{1}{c}$};\\
-\infty & \text{for $h>\frac{1}{c}$}.
\end{cases}
\end{equation} 
\end{theo}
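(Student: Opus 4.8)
The plan is to reduce the whole problem to an analysis of the function $h_Y(t)$ in terms of the arithmetic of the points $V_i$ near $t$, and then to compute the Hausdorff dimensions of the resulting level sets using a combination of the mass distribution principle (for the lower bounds) and natural covering arguments (for the upper bounds). First I would establish a pointwise formula for $h_Y(t)$: away from $S$, the Hölder exponent should be governed by how fast the jumps of $Y$ accumulate near $t$, which is precisely what $\delta(t)$ encodes, together with the local value $\alpha(t)$. The expected identity is something like $h_Y(t)=\min\bigl(\delta(t)/\alpha(t),\,1/\alpha(t)\bigr)$ or, after taking into account that the largest jump near $t$ of size $\Gamma_i^{-1/\alpha(V_i)}$ at distance $|V_i-t|$ contributes an exponent $\log\Gamma_i/(\alpha(V_i)\log|V_i-t|^{-1})$ and that $\Gamma_i\asymp i$ by the law of large numbers, something of the form $h_Y(t)=\delta(t)/\alpha(t)$ for $t\notin S$, with the convention that $\delta(t)\le 1$ always and $h_Y(t)=1/\alpha(t)$ exactly when $\delta(t)=1$. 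I would prove this by two-sided estimates on the oscillation of $Y$ on intervals $[t-r,t+r]$, isolating the dominant nearby jump and controlling the tail of the remaining jumps via the exponent $\alpha$ staying in a compact subinterval of $(1,2)$; the fact that $B$, $D$ and $Y$ share the same Hölder exponent at every $t$ (off the countable exceptional set $S$, which carries no Hausdorff dimension) should follow from the regularity of the prefactors $C_{\alpha(t)}^{1/\alpha(t)}$ and the relation between $B$, $D$ and $Y$ already recorded in the excerpt.

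With the pointwise formula in hand, the level set $F_h$ becomes, up to the negligible set $S$, the set of $t$ with $\delta(t)/\alpha(t)=h$, i.e. $\delta(t)=h\,\alpha(t)$. Since $\alpha$ is $\mathcal C^1$ and valued in $[c,d]$, and since $\delta(t)\in(0,1]$, this immediately explains the support $[0,1/c]$ of the spectrum and the three regimes: for $h\in(1/d,1/c)$ the constraint $\delta(t)=h\alpha(t)$ is solvable with $\delta(t)<1$ on a set that is "large" because the approximation exponent $\delta$ is unconstrained there; for $h\le 1/d$ the binding constraint comes from requiring a prescribed approximation rate $\delta(t)=h\alpha(t)\le 1$ at a point where $\alpha$ can be as large as $d$; and for $h=1/c$ one needs simultaneously $\delta(t)=1$ and $\alpha(t)=c$, which forces $t$ into the set $\{\alpha=c\}$ (a point where $\delta(t)<1$ would give $h<1/c$, and a point with $\alpha(t)>c$ would give $h>1/c$, impossible). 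This last case is where the answer $\dim_H\{t:\alpha(t)=c\}$ comes from, essentially because at such points $\delta(t)=1$ is the generic behaviour.

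The technical heart is then the dimension computation for the sets $E_\beta=\{t\notin S:\delta(t)=\beta\}$ for $\beta\in(0,1]$: I claim $\dim_H E_\beta=\beta$ for $\beta\in(0,1)$ and $\dim_H E_1=1$, because $\delta$ is an approximation exponent attached to the random countable dense set $S=\{V_i\}$ with the $i$-th point carrying "weight" $i$ in the definition of $\delta$ (the $-\log\phi(i)$ term). For the upper bound $\dim_H E_\beta\le\beta$ one covers $E_\beta$ by the balls $B(V_i,i^{-1/(\beta-\varepsilon)})$ and uses that the $V_i$ are i.i.d. uniform, so that the number of them in a dyadic interval is controlled, giving a $\beta$-dimensional covering sum that stays bounded; for the lower bound one builds a Cantor-like subset of $E_\beta$ supported on well-separated points of $S$ realising exactly the rate $\beta$, and applies the mass distribution principle / Frostman's lemma, using the uniform distribution of the $V_i$ to guarantee enough points at each scale. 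For the $h\in[0,1/d]$ piece one further intersects with level sets of $\alpha$; since $\alpha$ is $\mathcal C^1$, its level sets are typically of dimension zero but one only needs $\alpha(t)$ close to $d$, i.e. one works near the (closed) set where $\alpha$ is maximal, and the $\mathcal C^1$ smoothness lets one transfer the computation from $E_{hd}$ with a negligible distortion — this interplay between the deterministic constraint on $\alpha$ and the random approximation exponent $\delta$, and making the two Cantor constructions compatible, is the main obstacle I anticipate. The endpoint $h=1/c$ is handled separately and directly: there $\delta(t)=1$ automatically at Lebesgue-a.e. point and in particular at $\mathcal H^s$-a.e. point of $\{\alpha=c\}$ for the appropriate $s$, so $F_{1/c}$ differs from $\{\alpha=c\}$ by a set of smaller dimension, giving the stated value. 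Finally, the "$-\infty$" entries are just the statement that $F_h=\emptyset$ outside $[0,1/c]$, which is immediate from the pointwise formula and $\delta\le 1$, $\alpha\ge c$.
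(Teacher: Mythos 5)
Your skeleton agrees with the paper's (first prove the pointwise formula $h_Y(t)=\delta(t)/\alpha(t)$ for $t\notin S$, then compute the Hausdorff dimension of the level sets), but the decisive step is exactly the one you leave unresolved. For $h\in(0,\frac1d]$ and $h\in(\frac1d,\frac1c)$ the level set is $F_h=\{t:\delta(t)=h\alpha(t)\}$, an \emph{exact equality} coupling the random approximation rate and the deterministic function $\alpha$ at the same point. Your reduction --- compute $\dim_H E_\beta$ for $E_\beta=\{\delta(t)=\beta\}$ and then intersect $E_{hd}$ with a region where $\alpha$ is close to $d$ --- does not produce a subset of $F_h$: a point with $\delta(t)=hd$ but $\alpha(t)<d$ satisfies $\delta(t)\neq h\alpha(t)$ and so lies outside $F_h$, while the set where $\alpha(t)=d$ exactly is typically of dimension $0$, so intersecting with it destroys the lower bound $hd$. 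What is needed is a Cantor-type set on which $\delta(t)$ tracks the continuously varying target $h\alpha(t)$ pointwise, with the dimension given by $\sup_t h\alpha(t)$; this is precisely the localized (``conditioned'') ubiquity statement that the paper imports as Theorem 21 of \cite{Bar}, after identifying $\delta_t(\mathcal{P})=1/\delta(t)$ for the system $\mathcal{P}=\{(V_i,\frac{1}{1+\Gamma_i})\}$. You correctly flag this interplay as ``the main obstacle'', but it is the heart of the theorem, not a distortion estimate to be smoothed over; without it (or a reproof of it) the lower bounds in the two middle regimes are missing. Likewise, your one-line treatment of the lower bound $h_Y(t)\geq\delta(t)/\alpha(t)$ hides most of the technical work of the paper (dyadic-block counts of the $V_i$ via Stute's oscillation bound for the empirical process, plus exponential inequalities for the sums of small jumps), though the plan is at least pointed in the right direction.

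The endpoint $h=\frac1c$ as you argue it contains a genuine error: from ``$\delta(t)=1$ at Lebesgue-a.e.\ $t$'' you cannot conclude ``in particular at $\mathcal{H}^s$-a.e.\ point of $\{\alpha=c\}$'', because $\{\alpha=c\}$ is in general Lebesgue-null, so the Lebesgue-a.e.\ statement says nothing about it. The correct route (the paper's) is: for each \emph{fixed} $t$ one has $\P(\delta(t)=1)=1$ (via the almost sure value of the pointwise exponent at a fixed time, Theorem 3.7 of \cite{LL2} combined with the two-sided bounds); then choose a compact $E_c\subset\{\alpha=c\}$ with $0<\mathcal{H}^s(E_c)<\infty$ (Theorem 4.10 of \cite{Falc}), set $\mu_s(\cdot)=\mathcal{H}^s(E_c\cap\cdot)$, and apply Fubini to get that almost surely $\mu_s$-a.e.\ point satisfies $\delta(t)=1$, whence $\dim_H(F_{1/c})\geq s$ for every $s<\dim_H\{\alpha=c\}$. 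Some form of this fixed-point-plus-Fubini argument is indispensable; the ``in particular'' step as written is false.
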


\noindent Theorem \ref{spectre} follows from a series of lemmas that are proven in the next section:
\begin{lem}\label{lem1}
Almost surely, $t \mapsto Y(t)$ is càdlàg. 
\end{lem}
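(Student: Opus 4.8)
The plan is to realise $Y$ as the almost sure uniform limit on $[0,1]$ of its partial sums $Y_N(t)=\sum_{i=1}^N\gamma_i\Gamma_i^{-1/\alpha(t)}\mathbf 1_{[0,t]}(V_i)$. Each $Y_N$ is c\`adl\`ag, being a finite sum of products of the continuous (indeed $\mathcal C^1$) function $t\mapsto\Gamma_i^{-1/\alpha(t)}=\exp(-\alpha(t)^{-1}\log\Gamma_i)$ with the right-continuous step function $t\mapsto\mathbf 1_{[0,t]}(V_i)=\mathbf 1_{[V_i,1]}(t)$, and the class of c\`adl\`ag functions is stable under uniform convergence. So it suffices to prove that, almost surely, $\sup_{t\in[0,1]}|Y(t)-Y_N(t)|\to0$. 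I would argue conditionally on $\mathcal G=\sigma((\Gamma_i)_i,(V_i)_i)$, under which the $\gamma_i$ remain i.i.d.\ symmetric signs independent of the (now frozen) coefficients; by the law of large numbers $\Gamma_i/i\to1$ a.s., so $\sum_i\Gamma_i^{-2/d}<\infty$ a.s., where $2/d>1$ because $d=\sup_u\alpha(u)<2$ (this strict inequality is used throughout).

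The next step is to split off the term where the coefficient is evaluated at the jump location, writing $Y=\widetilde Y+H$ with $\widetilde Y(t)=\sum_i\gamma_i\Gamma_i^{-1/\alpha(V_i)}\mathbf 1_{[0,t]}(V_i)$ and $H(t)=\sum_i\gamma_i(\Gamma_i^{-1/\alpha(t)}-\Gamma_i^{-1/\alpha(V_i)})\mathbf 1_{[V_i,1]}(t)$; note $Y_N=\widetilde Y_N+H_N$, and the $i$-th summand of $H$ is continuous on $[0,1]$ since it vanishes at $V_i$. For $\widetilde Y$ the argument is elementary: for $N<M$, reorder the finitely many indices $N<i\le M$ by increasing value of $V_i$ (a $\mathcal G$-measurable permutation); as $t$ runs over $[0,1]$, the tail $\sum_{N<i\le M}\gamma_i\Gamma_i^{-1/\alpha(V_i)}\mathbf 1_{[0,t]}(V_i)$ runs precisely over the successive partial sums in that order, which form a martingale given $\mathcal G$, so Doob's $L^2$ maximal inequality yields $\E[\sup_t|\sum_{N<i\le M}\gamma_i\Gamma_i^{-1/\alpha(V_i)}\mathbf 1_{[0,t]}(V_i)|^2\mid\mathcal G]\le 4\sum_{N<i\le M}\Gamma_i^{-2/\alpha(V_i)}$, which for $N$ large is at most $4\sum_{i>N}\Gamma_i^{-2/d}$, the tail of a convergent series. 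Hence the partial sums of $\widetilde Y$ are uniformly Cauchy a.s., so $\widetilde Y$ is c\`adl\`ag and $\widetilde Y_N\to\widetilde Y$ uniformly.

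The treatment of $H$ is where the main difficulty lies, because its summands are no longer step functions (so the reordering trick is unavailable) and because crude absolute bounds are useless: $\sum_i\Gamma_i^{-1/d}$ diverges since $1/d<1$, so the cancellation carried by the $\gamma_i$ must be retained. The key deterministic estimate is that, for $i$ large and all $s,t\in[0,1]$, $|h_i(t)-h_i(s)|\le \kappa\,\Gamma_i^{-1/d}|\log\Gamma_i|\,|t-s|$, where $h_i$ denotes the $i$-th summand of $H$ and $\kappa$ depends only on $\|\alpha'\|_\infty$ and $c$; this follows from the mean value theorem applied to $v\mapsto\Gamma_i^{-1/\alpha(v)}$ (using $\alpha\in\mathcal C^1$ and $c\le\alpha\le d$), the case where $s$ and $t$ straddle $V_i$ being handled the same way since $h_i(V_i)=0$. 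Consequently, given $\mathcal G$, the process $t\mapsto\sum_{N<i\le M}\gamma_i h_i(t)$ is sub-Gaussian for the pseudometric $\rho(s,t)=(\sum_{N<i\le M}(h_i(t)-h_i(s))^2)^{1/2}$, which is at most $L_{N,M}|t-s|$ with $L_{N,M}^2=\kappa^2\sum_{N<i\le M}\Gamma_i^{-2/d}(\log\Gamma_i)^2$; a standard chaining (entropy) bound then gives $\E[\sup_t|\sum_{N<i\le M}\gamma_i h_i(t)|\mid\mathcal G]\le C\,L_{N,M}$, the process vanishing at $t=0$. Since $\Gamma_i^{-2/d}(\log\Gamma_i)^2\sim i^{-2/d}(\log i)^2$ is summable, this tends to $0$, so $H_N\to H$ uniformly a.s.\ and $H$ is continuous. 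Therefore $Y_N\to\widetilde Y+H$ uniformly a.s.; the limit agrees with the pointwise series (which converges a.s.\ for each fixed $t$), so $Y$ is well defined and, being a uniform limit of c\`adl\`ag functions, c\`adl\`ag.

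Finally, I would record the shortcut: if one imports from \cite{LGLVL} that $D$ has c\`adl\`ag paths, then, since $D(t)=C_{\alpha(t)}^{1/\alpha(t)}Y(t)$ with $t\mapsto C_{\alpha(t)}^{1/\alpha(t)}$ continuous and bounded away from $0$ on $[0,1]$ (because $\alpha$ takes values in the compact subinterval $[c,d]$ of $(1,2)$ and $u\mapsto C_u$ is continuous and positive there), $Y=D\cdot(C_{\alpha(\cdot)}^{1/\alpha(\cdot)})^{-1}$ is c\`adl\`ag at once; the self-contained argument above has the advantage of making transparent that only $\alpha\in\mathcal C^1$ and $1<c\le d<2$ are really needed.
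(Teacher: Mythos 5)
Your proof is correct, but it follows a genuinely different route from the paper's. The paper does not re-prove the uniform convergence at all: it simply quotes Lemma 8 of \cite{LGLVL}, which already gives that $Y_N \to Y$ uniformly on $[0,1]$ almost surely, and then devotes the whole proof to a hands-on $\varepsilon/3$ verification that the uniform limit is c\`adl\`ag, treating separately $t\in S$ and $t\notin S$; a side benefit of that direct verification is the identification of the jump set as exactly $S$ with jump $\gamma_{i}\Gamma_{i}^{-1/\alpha(V_{i})}$ at $V_i$ (Note \ref{jumps}), which is then used in the proof of Lemma \ref{lem3}. You instead re-derive the uniform convergence from scratch via the decomposition $Y=\widetilde Y+H$, handling the pure-jump part by sorting the jump locations and applying Doob's $L^2$ maximal inequality conditionally on $\sigma((\Gamma_i),(V_i))$, and the correction term $H$ by a Lipschitz bound $\kappa\,\Gamma_i^{-1/d}|\log\Gamma_i|\,|t-s|$ plus sub-Gaussian chaining; you then conclude by stability of the c\`adl\`ag class under uniform limits (the paper proves this stability by hand rather than invoking it). Your version is more self-contained, makes explicit that only $\alpha\in\mathcal C^1$ and $1<c\le d<2$ are used, and in fact also yields the jump structure of Note \ref{jumps} since $H$ is continuous and $\widetilde Y$ carries the jumps. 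Two small points to tighten: the passage from ``conditional moments of $\sup_t$ of the tails tend to $0$'' to ``almost sure uniform convergence'' should be justified by L\'evy's maximal inequality or the It\^o--Nisio theorem for sums of independent symmetric Banach-space-valued summands (the moment bounds alone only give convergence in probability of the tails), and the finitely many indices with $\Gamma_i<1$ should be absorbed into a fixed $Y_{N_0}$ before applying the $\Gamma_i^{-1/d}$-type bounds; both are routine and do not affect the validity of the argument.
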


\begin{lem}\label{lem2}
Almost surely, $\forall t \in [0,1]\backslash S$, $\delta(t) \leq 1$. 
\end{lem}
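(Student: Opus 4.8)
\noindent\textbf{Proof plan for Lemma~\ref{lem2}.}
Since $\delta(t)$ is an infimum over all sequences $(r_n)_n\in\mathcal{R}_t$, it is enough to exhibit, for each $t\in[0,1]\backslash S$, one such sequence along which $\liminf_i\bigl(-\log\phi(i)/\log|V_{\phi(i)}-t|\bigr)\le 1$. The sequence will be extracted from the $V_i$'s themselves, the point being that $V_1,\dots,V_n$ already cover $[0,1]$ at a scale only logarithmically coarser than $1/n$.

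Step 1 (covering estimate). First I would show that, almost surely, there is a random $n_0$ with
\[
\min_{1\le i\le n}|V_i-t|\le \frac{3\log n}{n}\qquad\text{for all }n\ge n_0\text{ and all }t\in[0,1].
\]
This is a routine union bound together with Borel--Cantelli: cover $[0,1]$ by $O(n/\log n)$ intervals of length $3(\log n)/n$; if the displayed inequality fails for some $t$, then the interval of this cover containing $t$ has all of its points within $3(\log n)/n$ of $t$, hence contains no $V_i$ with $i\le n$, an event of probability at most $(1-3(\log n)/n)^n\le n^{-3}$ for each of the intervals, so at most $O(n^{-2})$ after summing, which is summable in $n$. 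On the same almost sure event one may also assume that the $V_i$ are pairwise distinct.

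Step 2 (construction). Now one fixes a point $\omega$ of this event and a point $t\in[0,1]\backslash S$, and for every $n\ge n_0$ one chooses $i_n\in\{1,\dots,n\}$ with $|V_{i_n}-t|\le 3(\log n)/n$. Because $t\notin S$, one has $|V_j-t|>0$ for all $j$, so $\min_{j\le M}|V_j-t|>0$ for each fixed $M$; since $|V_{i_n}-t|\to 0$ this forces $i_n\to\infty$. After discarding the (finitely many) initial terms, which changes neither the limit $V_{i_n}\to t$ nor the $\liminf$ below, the sequence $r_n:=V_{i_n}$ lies in $\mathcal{R}_t$, with $\phi(n)=i_n$. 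Then, using $i_n\le n$ and $|V_{i_n}-t|\le 3(\log n)/n$,
\[
-\frac{\log\phi(n)}{\log|V_{\phi(n)}-t|}=\frac{\log i_n}{-\log|V_{i_n}-t|}\le\frac{\log n}{\log n-\log\log n-\log 3}\ \longrightarrow\ 1\qquad(n\to\infty),
\]
so $\liminf_n\bigl(-\log\phi(n)/\log|V_{\phi(n)}-t|\bigr)\le 1$, and therefore $\delta(t)\le 1$.

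The argument has no serious obstacle; the one point to watch is the order of the quantifiers: the full-measure event produced in Step~1 is uniform in $t$, so it can be fixed before $t$, which is exactly what ``almost surely, for all $t\in[0,1]\backslash S$'' requires. Note also that the hypothesis $t\notin S$ is used only in Step~2, to force $i_n\to\infty$ and hence to ensure that $(r_n)_n$ is an admissible element of $\mathcal{R}_t$ rather than a sequence that eventually stays at a point of $S$.
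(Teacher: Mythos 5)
Your proof is correct and follows essentially the same strategy as the paper: a covering argument with a union bound and Borel--Cantelli to get an almost sure event, uniform in $t$, guaranteeing a point of $S$ with index at most $n$ within a slightly-coarser-than-$1/n$ distance of every $t$, from which a sequence in $\mathcal{R}_t$ witnessing $\delta(t)\le 1$ is extracted deterministically. The only difference is bookkeeping: you work with the first $n$ indices at scale $3\log n/n$, while the paper uses dyadic index blocks $[2^j,2^{j+1})$ and intervals of width $2^{-(j+1)(1-1/\sqrt j)}$ around dyadic points, which is an equivalent quantitative choice (just take care to place your covering intervals inside $[0,1]$ so each has full length).
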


\begin{lem}\label{lem3}
Almost surely, $\forall t \in [0,1]\backslash S$, $\h \leq \frac{\delta(t)}{\alpha(t)}$. 
\end{lem}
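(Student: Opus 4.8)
The plan is to bound $\h$ from above by exhibiting, for every $\gamma$ slightly above $\delta(t)/\alpha(t)$, a sequence $\sigma_k\to t$ with $|Y(\sigma_k)-Y(t)|/|\sigma_k-t|^{\gamma}\to+\infty$; letting $\gamma$ decrease to $\delta(t)/\alpha(t)$ then gives the lemma. Since $\delta(t)\le1<\alpha(t)$ (Lemma \ref{lem2}), one has $\delta(t)/\alpha(t)<1$, so it suffices to work with exponents $\gamma\in(\delta(t)/\alpha(t),1)$ and the elementary form $|Y(s)-Y(t)|\le C|s-t|^{\gamma}$ of the H\"older condition, with no polynomial correction. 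I would first fix the almost sure event on which the $V_i$ are pairwise distinct and dense in $[0,1]$ (so $\mathcal{R}_t\neq\emptyset$ and $\delta$ is defined for every $t\notin S$), $\Gamma_i/i\to1$, and $t\mapsto Y(t)$ is c\`adl\`ag (Lemma \ref{lem1}); everything afterwards is deterministic on this event, which does not depend on $t$ --- this is what permits the quantifier ``$\forall t\notin S$'' inside the ``almost surely''.

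The mechanism is that a jump of $Y$ of modulus $a$ located at distance $\rho$ from $t$ forces $\h\le\log a/\log\rho$. Fix $t\notin S$ and $\gamma\in(\delta(t)/\alpha(t),1)$, and choose $\eta,\zeta>0$ small enough that $(\delta(t)+\eta)(1/\alpha(t)+\zeta)<\gamma$, which is possible because the left-hand side tends to $\delta(t)/\alpha(t)$. By definition of $\delta(t)$ as an infimum there is $(V_{\phi(n)})_n\in\mathcal{R}_t$ with $\liminf_i\bigl(-\log\phi(i)/\log|V_{\phi(i)}-t|\bigr)<\delta(t)+\eta$; since $V_{\phi(n)}\to t\notin S$ each index value is attained only finitely often, hence $\phi(n)\to\infty$, and extracting I obtain $j_k:=\phi(i_k)\to\infty$ with $\rho_k:=|V_{j_k}-t|\to0$ and $j_k\le\rho_k^{-(\delta(t)+\eta)}$ for all $k$. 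Because $Y$ is c\`adl\`ag and the $V_i$ are distinct, the jump of $Y$ at the point $V_{j_k}$ equals $Y(V_{j_k})-Y(V_{j_k}^{-})=\gamma_{j_k}\Gamma_{j_k}^{-1/\alpha(V_{j_k})}$, of modulus $\Gamma_{j_k}^{-1/\alpha(V_{j_k})}$; using $\Gamma_{j_k}\sim j_k$ and $\alpha(V_{j_k})\to\alpha(t)$, this modulus is at least $j_k^{-(1/\alpha(t)+\zeta)}$ for $k$ large.

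To conclude, I would note that the interval $(t-2\rho_k,t+2\rho_k)$ contains $V_{j_k}$ in its interior, so the oscillation of $Y$ on it is at least that jump modulus (take $s'=V_{j_k}$ and $s\uparrow V_{j_k}$); hence there is $\sigma_k$ with $|\sigma_k-t|\le2\rho_k$ and $|Y(\sigma_k)-Y(t)|\ge\frac13\,\Gamma_{j_k}^{-1/\alpha(V_{j_k})}$. Combining the estimates,
\[
\frac{|Y(\sigma_k)-Y(t)|}{|\sigma_k-t|^{\gamma}}\ \ge\ \frac{1}{3\cdot2^{\gamma}}\,\rho_k^{(\delta(t)+\eta)(1/\alpha(t)+\zeta)-\gamma},
\]
whose exponent is negative, so the right-hand side tends to $+\infty$ as $\rho_k\to0$ while $\sigma_k\to t$. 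This gives $\h\le\gamma$, and letting $\gamma\downarrow\delta(t)/\alpha(t)$ finishes the argument.

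I expect the main obstacle to be the jump-modulus estimate $\Gamma_{j_k}^{-1/\alpha(V_{j_k})}\ge j_k^{-(1/\alpha(t)+\zeta)}$: one must control at once the discrepancy between $\Gamma_{j_k}$ and $j_k$ and the drift of $\alpha$ along the approaching sequence, absorb both into the slack $\zeta$ uniformly in $k$, and keep the underlying almost sure event independent of $t$. The extraction of $(j_k)$ from the definition of $\delta(t)$ and the final oscillation/H\"older step (resting on Lemma \ref{lem1}) should be routine.
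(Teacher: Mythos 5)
Your proof is correct, and it rests on the same mechanism as the paper's: the jumps $\gamma_i\Gamma_i^{-1/\alpha(V_i)}$ of $Y$ at the points $V_i$ (Note \ref{jumps}), a near-optimal sequence extracted from the definition of $\delta(t)$, the law of large numbers $\Gamma_i/i\to1$ and the continuity of $\alpha$. The difference is in how the key inequality ``jumps accumulating at $t$ cap the pointwise exponent'' is obtained. The paper simply invokes Lemma 1 of \cite{Jaff1}, which gives $\h\leq\liminf_i \log|\gamma_{\phi(i)}\Gamma_{\phi(i)}^{-1/\alpha(V_{\phi(i)})}|/\log|V_{\phi(i)}-t|$ for every sequence of $\mathcal{R}_t$ at once, with no restriction on the size of the exponent (a jump cannot be compensated by a polynomial, which is continuous); the bound $\h\le\delta(t)/\alpha(t)$ then follows by taking the infimum over sequences. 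You instead reprove the special case you need by hand: you pick one sequence realizing $\delta(t)+\eta$, lower-bound the oscillation of $Y$ near $V_{j_k}$ by a third of the jump modulus, and exhibit points $\sigma_k\to t$ on which the $\gamma$-H\"older quotient blows up. This is self-contained and elementary, but it forces you to invoke Lemma \ref{lem2} so that $\delta(t)/\alpha(t)<1$ and the H\"older condition reduces to $|Y(s)-Y(t)|\le C|s-t|^{\gamma}$ with no polynomial correction --- a dependence the paper's route avoids (and which is legitimate, since Lemma \ref{lem2} is proved independently). One presentational remark: the identification of the jump at $V_{j_k}$ with $\gamma_{j_k}\Gamma_{j_k}^{-1/\alpha(V_{j_k})}$ does not follow from ``c\`adl\`ag plus distinctness of the $V_i$'' alone; it is the content of Note \ref{jumps}, established via the uniform convergence of the partial sums in the proof of Lemma \ref{lem1}, so you should cite that rather than treat it as automatic. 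Otherwise the quantifier handling (a single almost sure event independent of $t$, deterministic argument afterwards), the extraction of $(j_k)$, the absorption of $\Gamma_{j_k}\sim j_k$ and of the drift of $\alpha$ into the slack $\zeta$, and the final limit $\gamma\downarrow\delta(t)/\alpha(t)$ are all sound.
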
 

\begin{lem}\label{lem4}
Let $g : [0,1] \rightarrow \bbbr$ be a càdlàg function, and $f$ be the function defined on $[0,1]$ 
by $f(t) = \int_0^t\limits g(x) dx$.
The pointwise H\"{o}lder exponent $h_f$ of $f$ verifies: $\forall t \in (0,1)$,
$$h_f(t) \geq 1.$$
\end{lem}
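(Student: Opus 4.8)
The plan is to show that $f$ is globally Lipschitz on $[0,1]$; the conclusion $h_f(t)\geq 1$ at every interior point then follows at once from the definition of the pointwise H\"older exponent, taking the approximating polynomial at $t$ to be the constant $P_t\equiv f(t)$.

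The first step is to recall that a c\`adl\`ag function on a compact interval is bounded. Indeed, if $g$ were unbounded on $[0,1]$ there would be a sequence $(x_n)$ with $|g(x_n)|\to\infty$; by compactness a subsequence converges to some $x^{*}\in[0,1]$, and passing to a further subsequence we may assume $x_n\downarrow x^{*}$ or $x_n\uparrow x^{*}$ strictly. In the first case $g(x_n)\to g(x^{*}+)=g(x^{*})$, in the second $g(x_n)\to g(x^{*}-)$, both finite, a contradiction. Hence $M:=\sup_{x\in[0,1]}|g(x)|<\infty$.

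The second step is the elementary estimate: for $t\in(0,1)$ and $\tau$ small enough that $t+\tau\in[0,1]$,
$$
|f(t+\tau)-f(t)|=\left|\int_t^{t+\tau} g(x)\,dx\right|\leq M\,|\tau|.
$$
Thus $f$ satisfies a first-order H\"older estimate at $t$ with polynomial $P_t\equiv f(t)$ (degree $0\leq \lfloor 1\rfloor$), so by definition $h_f(t)\geq 1$.

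There is essentially no obstacle here: the only point requiring a moment's care is the boundedness of c\`adl\`ag functions on $[0,1]$ recalled above, and matching the Lipschitz bound to whichever normalisation of the pointwise H\"older exponent is in force (the constant polynomial suffices in all of them when the target exponent is $\leq 1$). The restriction $t\in(0,1)$ in the statement plays no role in the argument beyond ensuring two-sided increments are available.
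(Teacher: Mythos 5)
Your proof is correct, and it takes a slightly more elementary route than the paper. The paper argues in two ways: first by invoking the general principle that integration raises the pointwise H\"older exponent by at least one (applied to $h_g\geq 0$), and then by a direct computation showing that the one-sided derivatives of $f$ exist at every interior $t$, namely $\frac{f(t+h)-f(t)}{h}\to g(t^{+})$ as $h\to 0^{+}$ and $\to g(t^{-})$ as $h\to 0^{-}$, via the identity $\frac{f(t+h)-f(t)}{h}-g(t^{\pm})=\int_0^1\bigl(g(t+sh)-g(t^{\pm})\bigr)ds$ and bounded convergence. You instead use only the boundedness of the c\`adl\`ag function $g$ (your compactness argument for this is fine) to get the global Lipschitz estimate $|f(t+\tau)-f(t)|\leq M|\tau|$, which indeed gives $h_f(t)\geq 1$ with the constant polynomial, since for exponents $\leq 1$ a degree-zero polynomial is admissible in every standard normalisation. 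What the paper's direct computation buys in exchange for the extra work is the sharper information that $f$ is one-sidedly differentiable with derivatives $g(t^{\pm})$ (and the stated general principle would yield gains beyond exponent $1$ where $g$ is more regular), but none of that is needed for the lemma as stated, so your shorter argument is perfectly adequate here.
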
 

\begin{lem}\label{lem5}
Almost surely, $\forall t \in [0,1]\backslash S$, $\h \geq \frac{\delta(t)}{\alpha(t)}$. 
\end{lem}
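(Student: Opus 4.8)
The goal is to show that, almost surely, for all $t \in [0,1]\setminus S$ one has $h_Y(t) \geq \delta(t)/\alpha(t)$; combined with Lemma~\ref{lem3} this will pin down $h_Y(t)$ exactly. Since $Y(t) = \sum_i \gamma_i \Gamma_i^{-1/\alpha(t)} \mathbf 1_{[0,t]}(V_i)$ has two sources of $t$-dependence --- the indicator $\mathbf 1_{[0,t]}(V_i)$ (a jump part) and the exponent $\Gamma_i^{-1/\alpha(t)}$ (a smooth-in-$t$ part) --- the natural decomposition is $Y(t) = J(t) + R(t)$, where $J(t) = \sum_i \gamma_i \Gamma_i^{-1/\alpha(V_i)} \mathbf 1_{[0,t]}(V_i)$ carries all the jumps and $R(t) = \sum_i \gamma_i \big(\Gamma_i^{-1/\alpha(t)} - \Gamma_i^{-1/\alpha(V_i)}\big)\mathbf 1_{[0,t]}(V_i)$ is the remainder. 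One checks $R$ is absolutely continuous with a c\`adl\`ag density (differentiating term by term, using $\alpha \in \mathcal C^1$ and $\alpha > 1$ for summability), so Lemma~\ref{lem4} gives $h_R(t) \geq 1 \geq \delta(t)/\alpha(t)$ at every interior $t$ (recall $\delta(t) \leq 1 < \alpha(t)$ by Lemma~\ref{lem2}). Hence it suffices to bound $h_J(t)$ from below, and since $J$ is essentially the independent-increments process $B$ up to the same kind of smooth correction, the heart of the matter is the lower H\"older bound for a sum of weighted indicator jumps.

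**The main estimate.**
Fix $t \notin S$. For the lower bound I must show that for every $\varepsilon > 0$, $|Y(t+\rho) - Y(t)| \leq |\rho|^{\delta(t)/\alpha(t) - \varepsilon}$ for all small $\rho$. The increment $J(t+\rho) - J(t)$ is the sum of $\gamma_i C_{\alpha(V_i)}^{1/\alpha(V_i)}\Gamma_i^{-1/\alpha(V_i)}$ over the indices $i$ with $V_i$ strictly between $t$ and $t+\rho$. The definition of $\delta(t)$ says precisely that any sequence $V_{\phi(n)} \to t$ satisfies $\phi(n) \geq |V_{\phi(n)} - t|^{-\delta(t)+\varepsilon}$ eventually, i.e. the $V_i$'s close to $t$ have large index, hence (since $\Gamma_i \asymp i$ by the law of large numbers for the Poisson arrivals) small weight $\Gamma_i^{-1/\alpha(V_i)} \lesssim i^{-1/\alpha(V_i)}$. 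So for $V_i$ within distance $|\rho|$ of $t$, the weight is at most of order $|\rho|^{(\delta(t)-\varepsilon)/\alpha(V_i)}$, and since $\alpha$ is continuous this exponent is close to $(\delta(t)-\varepsilon)/\alpha(t)$ near $t$. Summing: the number of such indices is controlled (Poisson/LLN bound on $\#\{i : |V_i - t| \leq |\rho|\}$), the weights decay like a geometric-type series in $i^{-1/\alpha}$, and one aggregates to get $|J(t+\rho)-J(t)| \lesssim |\rho|^{\delta(t)/\alpha(t) - \varepsilon'}$. Letting $\varepsilon' \to 0$ yields $h_J(t) \geq \delta(t)/\alpha(t)$.

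**The obstacle and the wrap-up.**
The delicate point is making the chain ``$\delta$-control of indices $\Rightarrow$ weight decay $\Rightarrow$ summability of the increment'' uniform: $\delta(t)$ is defined via an $\liminf$ over \emph{all} sequences in $\mathcal R_t$, so I need to convert this into a genuine pointwise statement that for every $\varepsilon>0$ there is $\eta(t,\varepsilon)>0$ with $|V_i - t| \leq \eta \Rightarrow i \geq |V_i - t|^{-\delta(t)+\varepsilon}$, and then argue that the contribution of the (finitely many, bounded-index) $V_i$ with $|V_i-t|\le|\rho|$ but $i$ small is also $O(|\rho|)$ --- this uses that those finitely many $V_i$ are at positive distance from $t$, so for $|\rho|$ small enough none of them lies in $(t,t+\rho]$. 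A second technical care is the almost-sure control $\Gamma_i/i \to 1$ and the Poisson counting bound, both of which hold on a single event of probability one independent of $t$. Finally, combining $h_Y(t) \geq \min(h_J(t), h_R(t)) \geq \min(\delta(t)/\alpha(t), 1) = \delta(t)/\alpha(t)$ (the last equality by Lemma~\ref{lem2}) completes the proof, and the same argument applies verbatim to $B$ and, after splitting off its smooth $C_{\alpha(t)}^{1/\alpha(t)}$ prefactor, to $D$. $\Box$
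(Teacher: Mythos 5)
Your structural reductions are in line with the paper: splitting off the smooth part (your $R$, the paper's $A(t)=\int_0^t(\cdots)ds$ via Theorem 7 and Lemma 8 of \cite{LGLVL}) and then replacing $\Gamma_i$ by $i$ using $\Gamma_i/i\to 1$ (the paper's $B=W+Z$ split) are exactly the right first moves. The problem is the ``main estimate'' for the pure-jump part, which is where essentially all of the paper's work lies, and your aggregation step does not close. You bound each individual jump with $V_i\in(t,t+\rho]$ by $|\rho|^{(\delta(t)-\varepsilon)/\alpha(V_i)}$ and then ``sum'' using a count of the relevant indices together with the geometric decay of $i^{-1/\alpha(V_i)}$. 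But this is an absolute-value estimate, and the series of absolute values diverges almost surely: for the dyadic index block $i\in[2^j,2^{j+1})$, about $2^j|\rho|$ of the $V_i$ fall in an interval of length $|\rho|$, so the block contributes on the order of $2^j|\rho|\cdot 2^{-j/\alpha}$, and since $\alpha>1$ the sum over $j$ of $2^{j(1-1/\alpha)}$ explodes. The increment $Z(t)-Z(s)$ is finite only because of cancellation among the symmetric signs $\gamma_i$; no count-times-max-weight argument can give the bound $|\rho|^{(\delta(t)-\varepsilon)(1/\alpha(t)-\varepsilon)}$, or indeed any finite bound, for the high-index blocks.

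This cancellation is precisely what the paper's proof supplies and what is absent from your sketch: a maximal inequality over $(s,t)\in I_{k,m}^2$ reducing to the Rademacher sums $M_t^{m,j,k}$, the sub-Gaussian bound of Lemma 1.5 in \cite{LTal} giving the square-root saving $\sqrt{N_{m,j,k}}$ in place of $N_{m,j,k}$, and uniform-in-$(k,m,j)$ counting estimates $N_{m,j,k}\leq 3\sqrt m\,j$ (Bernoulli tails) and $N_{m,j,k}\leq 2\cdot 2^{j-m}$ (Stute's oscillation bound for the empirical process). With the square root, the block contribution becomes $j\sqrt{2^{j-m}}\,2^{-j/d_{k,m}}$, which is geometrically summable in $j$ because $\tfrac12-\tfrac1{d_U}<0$ (here $d<2$ is essential), and the leading term at $j_0\approx m(\delta(t)-\varepsilon)$ produces the exponent $(\delta(t)-\varepsilon)(1/d_U-\varepsilon)$. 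A secondary, related gap: since the statement is ``almost surely, for all $t\notin S$,'' the counting bounds must hold simultaneously for all dyadic intervals and scales on one event of full probability; the pointwise LLN/Poisson bound at a fixed $t$ that you invoke does not deliver this uniformity, which is why the paper routes the argument through the empirical process and Borel--Cantelli over the dyadic grid. Without the sign-cancellation mechanism and these uniform counts, the proposed proof does not establish the lemma.
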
 

\begin{lem}\label{lem6}
Almost surely, $\forall h <0, f_H(h)= -\infty$. 
\end{lem}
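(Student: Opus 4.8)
The plan is to prove that, on the almost sure event furnished by Lemma~\ref{lem1}, the level set $F_h=\{t\in[0,1]:h_Y(t)=h\}$ is empty for every $h<0$; since $f_H(h)=\dim_H F_h$ and $\dim_H\emptyset=-\infty$ (the convention already in force for the other $-\infty$ values in Theorem~\ref{spectre}), this yields $f_H(h)=-\infty$ for all $h<0$ simultaneously.

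The key observation is that the classical pointwise H\"{o}lder exponent $h_Y(t)$ is the supremum of the \emph{nonnegative} reals $\gamma$ for which $Y$ satisfies a H\"{o}lder-$\gamma$ condition at $t$, so it takes values in $[0,+\infty]$ at every point where $Y$ is bounded on a neighbourhood --- the exponent $\gamma=0$ always qualifying, since once $Y$ is bounded one has $|Y(s)-Y(t)|\le 2\sup_{[0,1]}|Y|$, which is a H\"{o}lder-$0$ bound because $|s-t|^{0}=1$. Hence $h_Y(t)\ge 0$ for every $t\in[0,1]$ as soon as $Y$ is bounded on $[0,1]$. To secure this boundedness I would invoke Lemma~\ref{lem1}: almost surely $t\mapsto Y(t)$ is c\`{a}dl\`{a}g, and a c\`{a}dl\`{a}g function on the compact interval $[0,1]$ is bounded --- at each point it has finite left and right limits, so it is bounded on a neighbourhood of that point, and compactness of $[0,1]$ turns these local bounds into a single global one.

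Putting these together, on the almost sure event of Lemma~\ref{lem1} we have $h_Y(t)\ge 0$ for all $t\in[0,1]$, so $F_h=\emptyset$ whenever $h<0$ and therefore $f_H(h)=-\infty$ for every $h<0$. I do not anticipate any genuine obstacle: the statement is essentially a corollary of Lemma~\ref{lem1} together with the fact that a bounded function cannot have a negative H\"{o}lder exponent; the only mild care needed is to make the conclusion uniform in $h$, which is automatic since the bound $h_Y(t)\ge 0$ does not depend on $h$. The same reasoning applies to $B$ and $D$, which are likewise c\`{a}dl\`{a}g and hence bounded on $[0,1]$.
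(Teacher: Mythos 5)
Your proposal is correct and matches the paper's argument: the paper likewise deduces from Lemma~\ref{lem1} that $Y$ is c\`adl\`ag, hence $h_Y(t)\geq 0$ for all $t$, so $F_h=\emptyset$ and $f_H(h)=-\infty$ for $h<0$. Your write-up merely spells out the intermediate step (c\`adl\`ag on a compact interval implies bounded, and boundedness forces a nonnegative H\"older exponent) that the paper leaves implicit.
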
 

\begin{lem}\label{lem7}
Almost surely, $ f_H(0)= 0$. 
\end{lem}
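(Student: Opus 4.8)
The plan is to identify the set $F_0=\{t\in[0,1]:\h=0\}$ exactly, and to show that it coincides with the countable set $S$; this forces $\dim_H F_0=0$, and it then only remains to note that $F_0$ is nonempty so that the value is $0$ rather than $-\infty$. First I would establish $F_0\subseteq S$: by Lemmas~\ref{lem3} and~\ref{lem5}, almost surely $\h=\delta(t)/\alpha(t)$ for every $t\in[0,1]\backslash S$, and since $\delta$ is positive and $\alpha$ is valued in $(1,2)$ this ratio is strictly positive, so no point of $[0,1]\backslash S$ belongs to $F_0$.

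Next I would establish $S\subseteq F_0$. By Lemma~\ref{lem1} the path $Y$ is almost surely c\`adl\`ag, so the left limit $Y(V_i^{-})$ exists for each $i$. Passing to the limit $s\uparrow V_i$ in the series $Y(s)=\sum_{j\ge1}\gamma_j\Gamma_j^{-1/\alpha(s)}\mathbf{1}_{[0,s]}(V_j)$ --- each indicator $\mathbf{1}_{[0,s]}(V_j)$ being eventually constant in $s$ because the $V_j$ are almost surely pairwise distinct, and $\alpha(s)\to\alpha(V_i)$ by continuity of $\alpha$ --- I expect to obtain $Y(V_i)-Y(V_i^{-})=\gamma_i\Gamma_i^{-1/\alpha(V_i)}$, which is almost surely nonzero since $\Gamma_i>0$ and $\gamma_i\in\{-1,1\}$. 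Hence $Y$ is discontinuous at $V_i$, so its pointwise H\"{o}lder exponent there equals $0$ and $V_i\in F_0$. Combining the two inclusions gives $F_0=S$. Since $S=\cup_i\{V_i\}$ is countable, $\dim_H F_0=\dim_H S=0$, and $S$ is almost surely nonempty; therefore $f_H(0)=\dim_H F_0=0$.

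The step I expect to be the main obstacle is the justification of exchanging the limit $s\uparrow V_i$ with the infinite summation when computing the jump of $Y$ at $V_i$; this should follow from the uniform tail estimates already used to prove the c\`adl\`ag property in Lemma~\ref{lem1}. If one prefers not to invoke those estimates again, one can instead observe that $Y$ minus its $i$-th summand is a series of exactly the same form, built from the points $\{V_j:j\neq i\}$ and hence continuous at $V_i$, while the removed summand $\gamma_i\Gamma_i^{-1/\alpha(t)}\mathbf{1}_{[0,t]}(V_i)$ jumps at $V_i$ by $\gamma_i\Gamma_i^{-1/\alpha(V_i)}\neq0$, which again forces $Y$ to be discontinuous there.
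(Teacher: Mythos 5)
Your second inclusion ($S\subseteq F_0$) and the nonemptiness remark are fine, but the first inclusion $F_0\subseteq S$ rests on the claim that $\delta(t)>0$ for every $t\notin S$, and this claim is false: the definition of $\delta$ only gives $\delta(t)\geq 0$ (the word ``positive'' is used in the weak sense), and almost surely the set $\{t\in[0,1]\backslash S:\delta(t)=0\}$ is nonempty --- in fact uncountable and dense. Indeed, since the $V_i$ are a.s.\ dense in $[0,1]$, for each integer $k$ the set $\bigcap_{N}\bigcup_{i\geq N}(V_i-i^{-k},V_i+i^{-k})$ is a dense $G_\delta$; by Baire's theorem the intersection over all $k$, minus the countable set $S$, is still a nonempty (uncountable, dense) set, and every $t$ in it satisfies $\delta(t)=0$, hence $\h=0$ by Lemma \ref{lem3} together with the c\`adl\`ag regularity from Lemma \ref{lem1}. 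So $F_0$ strictly contains $S$ and is uncountable, and your countability argument for $\dim_H F_0=0$ collapses. (The paper itself allows $\delta(t)=0$ for $t\notin S$: see the case distinction ``If $\delta(t)=0$, then $\h=0$'' at the start of the H\"older-exponent computation in the proof of Lemma \ref{lem5}.)

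What is actually needed --- and this is the whole content of the paper's proof --- is a bound on the Hausdorff dimension of $\{t\notin S:\delta(t)=0\}$ obtained by a covering argument: for every $\gamma>0$ this set is contained in $E_\gamma=\limsup_{j}\bigcup_{i=2^j}^{2^{j+1}-1}\bigl[V_i-i^{-1/(\gamma\alpha(V_i))},V_i+i^{-1/(\gamma\alpha(V_i))}\bigr]$ (from $\delta(t)=0$ one extracts, for each $\gamma$, infinitely many $i$ with $|V_i-t|\leq i^{-1/(\gamma\alpha(V_i))}$), and $E_\gamma$ is covered by the tails of the family of intervals of half-length $i^{-1/(\gamma d)}$ centered at the $V_i$, whose $s$-dimensional premeasures vanish for $s>\gamma d$; hence $\dim_H E_\gamma\leq\gamma d$, and letting $\gamma\downarrow 0$ gives $\dim_H\{t\notin S:\delta(t)=0\}=0$. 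Adding the countable nonempty set $S$ (dimension $0$, and contained in $F_0$, which settles the value $0$ rather than $-\infty$) then yields $f_H(0)=0$. Your proposal is missing exactly this covering step, and no countability shortcut can replace it.
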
 

\begin{lem}\label{lem8}
Almost surely, $\forall h \in (0, \frac{1}{d}], f_H(h)= hd$. 
\end{lem}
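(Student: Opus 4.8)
The plan is to reduce the computation of $\hd F_h$ to a random ubiquity problem governed by the atoms $V_j$. By Lemmas~\ref{lem3} and~\ref{lem5}, almost surely $\h=\delta(t)/\alpha(t)$ for every $t\in[0,1]\setminus S$; since $Y$ jumps at each $V_i$ one has $h_Y(V_i)=0$, so $F_h\cap S=\emptyset$ for $h>0$. Hence, almost surely, for $h\in(0,\tfrac1d]$,
\[
F_h=\{\,t\in[0,1]\setminus S:\ \delta(t)=h\,\alpha(t)\,\}.
\]
Unwinding the definition of $\delta$ shows that $\delta(t)\le\beta$ iff $|V_j-t|<j^{-\tau}$ holds for infinitely many $j$, for every $\tau<1/\beta$; consequently $\{\delta\le\beta\}\subset\limsup_j(V_j-j^{-\tau},V_j+j^{-\tau})$ for all $\tau<1/\beta$, while $\limsup_j(V_j-j^{-1/\beta},V_j+j^{-1/\beta})\subset\{\delta\le\beta\}$. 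Thus $F_h$ is squeezed, up to the countable set $S$, between two random $\limsup$-sets of shrinking intervals centred at the $V_j$, and the problem becomes a dimension computation for such sets.

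For the upper bound, $\alpha\le d$ gives $F_h\subset\{\delta\le hd\}$. When $h<\tfrac1d$, so $hd<1$, pick $\tau<1/(hd)$; then $\{\delta\le hd\}\subset\bigcup_{j\ge N}(V_j-j^{-\tau},V_j+j^{-\tau})$ for every $N$, and since $\sum_{j\ge N}(2j^{-\tau})^s\to0$ as $N\to\infty$ for each $s>1/\tau$, one gets $\mathcal{H}^s(\{\delta\le hd\})=0$ for all $s>hd$, hence $\hd F_h\le hd$. For $h=\tfrac1d$ the bound $\hd F_h\le1=hd$ is trivial. This direction is routine.

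The core is the lower bound $\hd F_h\ge hd$. I would first record that, for fixed $\beta\in(0,1)$ and any subinterval $J$, $\hd\{t\in J:\delta(t)=\beta\}=\beta$ with infinite $\mathcal{H}^\beta$-measure. Indeed the events $\{\,t\in(V_j-j^{-1},V_j+j^{-1})\,\}$ are independent with probabilities comparable to $j^{-1}$, whose sum diverges, so by the second Borel--Cantelli lemma and Fubini $\limsup_j(V_j-j^{-1},V_j+j^{-1})$ has full Lebesgue measure in $J$ almost surely; the Mass Transference Principle upgrades this to $\mathcal{H}^\beta\bigl(\limsup_j(V_j-j^{-1/\beta},V_j+j^{-1/\beta})\bigr)=+\infty$, whence $\mathcal{H}^\beta(\{\delta\le\beta\})=+\infty$. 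Since $\{\delta<\beta\}=\bigcup_n\{\delta\le\beta-\tfrac1n\}$ and each term has dimension $\beta-\tfrac1n<\beta$ by the covering bound, $\mathcal{H}^\beta(\{\delta<\beta\})=0$, and subtracting gives $\mathcal{H}^\beta(\{\delta=\beta\})=+\infty$; the reverse inequality $\hd\{\delta=\beta\}\le\beta$ is the covering bound once more.

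To accommodate the varying $\alpha$, let $t^\star$ maximise $\alpha$, so $\alpha(t^\star)=d$, and for small $\rho$ let $J_\rho$ be a (possibly one-sided) neighbourhood of $t^\star$ on which $\alpha\in[d-\omega(\rho),d]$ with $\omega(\rho)\to0$. It suffices to build, inside $F_h\cap J_\rho$, a compact set of dimension $\ge h(d-\omega(\rho))-\varepsilon$: letting $\rho\to0$ along a sequence and taking unions then forces $\hd F_h\ge hd$ (the endpoint $h=\tfrac1d$ is treated identically, with approximation rates tending to $1$ from below). I would run a Cantor construction in which, by continuity, $\alpha$ is essentially a single constant $a$ on each generation-$n$ piece; inside such a piece one keeps the atoms $V_j$ with index in a prescribed fast-growing window, surrounds each by an interval of radius $\asymp j^{-1/(ha)}$, and deletes the points lying abnormally close to some atom. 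A limit point $t$ is then approximated by the retained atoms at rate exactly $1/(h\alpha(t))$ and no faster, so $\delta(t)=h\alpha(t)$ and the limit set lies in $F_h$, while a Frostman measure giving each piece mass inversely proportional to its number of children satisfies the mass distribution condition with exponent $h(d-\omega(\rho))-\varepsilon$, bounding the dimension below; alternatively one may quote a ubiquity theorem with a position-dependent approximation function. The main obstacle is precisely this construction, and within it the demand that the rate be attained \emph{exactly}: the ``convergent'' half $\delta(t)\le h\alpha(t)$ comes from the Mass Transference input, but the ``non-over-approximation'' half $\delta(t)\ge h\alpha(t)$ forces the deletion of over-approximated points at every generation and a careful count of surviving children, all while the target rate $1/(h\alpha(t))$ drifts with $t$. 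Granting this, the two bounds combine to give $\hd F_h=hd$ for every $h\in(0,\tfrac1d]$, which is Lemma~\ref{lem8}.
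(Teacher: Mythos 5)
Your reduction of $F_h$ to the level set $\{t\notin S:\delta(t)=h\,\alpha(t)\}$, your covering upper bound $\dim_H F_h\le hd$, and your treatment of the \emph{constant}-target case (second Borel--Cantelli plus Fubini to get full Lebesgue measure of $\limsup_j B(V_j,j^{-1})$, then the Mass Transference Principle to get $\mathcal{H}^{\beta}\bigl(\limsup_j B(V_j,j^{-1/\beta})\bigr)=\infty$, and removal of $\{\delta<\beta\}$) are all sound. But the lemma is not reduced to that case: since $\{t:\alpha(t)=d\}$ may be a single point, one genuinely needs the lower bound for the \emph{position-dependent} target, i.e.\ $\dim_H\{t:\delta(t)=h\,\alpha(t)\}\ge h(d-\omega)$ on a neighbourhood of the maximiser of $\alpha$. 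This is exactly the step you leave as a sketch (``Granting this\ldots'', ``the main obstacle is precisely this construction''): the Cantor construction with index windows, radii $\asymp j^{-1/(ha)}$, deletion of over-approximated points and a Frostman measure is the entire analytic content of the lemma, and it is delicate precisely for the reasons you name (attaining the rate exactly while the target $1/(h\alpha(t))$ drifts with $t$, and controlling the random configuration of the $V_j$ uniformly over generations). As written, the proof therefore has a genuine gap at its core.

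The paper closes this gap not by a hand-made construction but by a change of viewpoint plus a citation: setting $\lambda_i=\frac{1}{1+\Gamma_i}$ and $\mathcal{P}=\{(V_i,\lambda_i)\}_{i\ge1}$, it first proves the deterministic identity $\delta_t(\mathcal{P})=\frac{1}{\delta(t)}$ between your exponent $\delta(t)$ and the approximation rate of $t$ by the Poisson system $\mathcal{P}$ (using $\Gamma_i/i\to1$), and then invokes Theorem~21 of \cite{Bar}, a localized ubiquity theorem for such Poisson point systems which yields directly, for the continuous function $t\mapsto\frac{1}{h\alpha(t)}\ge1$ (valid since $hd\le1$, which also covers the endpoint $h=\frac1d$), that $\dim_H\{t:\delta_t(\mathcal{P})=\frac{1}{h\alpha(t)}\}=\sup_t h\alpha(t)=hd$, i.e.\ both bounds at once. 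So to complete your argument you must either carry out in full the Cantor/Frostman construction with varying rate --- essentially reproving that theorem --- or, as you hint at the end, quote such a position-dependent ubiquity result explicitly; the identification $\delta_t(\mathcal{P})=1/\delta(t)$ is the bridge that makes the citation applicable.
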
 

\begin{lem}\label{lem9}
Almost surely, $\forall h \in (\frac{1}{d}, \frac{1}{c}), f_H(h)= 1$. 
\end{lem}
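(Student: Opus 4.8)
The plan is to establish that, almost surely, for every $h\in(\frac1d,\frac1c)$ the set $F_h=\{t\in[0,1]:h_Y(t)=h\}$ has full Hausdorff dimension, i.e. $\dim_H F_h=1$. By Lemma~\ref{lem3} and Lemma~\ref{lem5} we already know that for $t\notin S$ we have $h_Y(t)=\frac{\delta(t)}{\alpha(t)}$, so it suffices to produce, for each such $h$, a subset $E_h\subset[0,1]\setminus S$ with $\dim_H E_h=1$ on which $\frac{\delta(t)}{\alpha(t)}=h$. The idea is to fix a target value $\beta=h\,\alpha(t)$ for $\delta$ along a well-chosen subset of times; since $\alpha$ is only $\mathcal C^1$ and not constant, one cannot ask $\delta$ to be literally constant, so instead I would build the set fibrewise over level sets of $\alpha$, or more simply restrict attention to a small subinterval $I\subset[0,1]$ on which $\alpha$ is close to a constant $a\in(c,d)$ with $\frac1d<h<\frac1a$ (hence $ha\in(\delta\text{-range})$), and then let $I$ shrink appropriately; because $h$ ranges over the open interval $(\frac1d,\frac1c)$ this can be done for a dense set of $a$'s and the union argument still yields dimension~$1$.

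The core of the argument is a Jarník-type construction controlling $\delta(t)$. Recall $\delta(t)=\inf_{(V_{\phi(n)})\in\mathcal R_t}\liminf_i -\frac{\log\phi(i)}{\log|V_{\phi(i)}-t|}$. To force $\delta(t)\le\beta$ I want points $t$ that are very well approximated by $V_i$'s with small index, so I would consider the $\limsup$ set
\[
G_\beta=\bigcap_{N\ge1}\bigcup_{i\ge N}\bigl(V_i-\Gamma_i^{-\beta'},\,V_i+\Gamma_i^{-\beta'}\bigr)
\]
for a suitable exponent $\beta'$ tied to $\beta$ (using that $\Gamma_i\sim i$ so $\phi(i)\approx i\approx\Gamma_i$). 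Standard mass-transference / ubiquity arguments for $\limsup$ sets of balls (the Jarník--Besicovitch mechanism, as in the classical theory of Diophantine approximation, here applied to the random but almost surely equidistributed family $(V_i)$) give a lower bound on $\dim_H G_\beta$; letting the approximation exponent tend to its critical value drives this dimension to $1$. To get the matching upper bound $\delta(t)\ge\beta$ on a large subset, I would intersect with the complement of a slightly faster-approximation $\limsup$ set and use a Borel--Cantelli / covering estimate to show the removed part has dimension~$0$ (or at least strictly less than $1$), so the "exact" set where $\delta(t)=\beta$ still has dimension~$1$. Combining with $h_Y(t)=\delta(t)/\alpha(t)$ and the near-constancy of $\alpha$ on $I$ then pins $h_Y(t)$ to $h$ up to an error that vanishes as $I$ shrinks; a diagonal/countable-union argument over a sequence of shrinking intervals and rational parameters upgrades this to the exact statement for all $h\in(\frac1d,\frac1c)$ simultaneously, almost surely.

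The main obstacle I anticipate is the two-sided control of $\delta$, i.e. proving the upper bound on $\delta(t)$ (equivalently the lower bound on $h_Y$ being attained, not just bounded) while keeping the Hausdorff dimension equal to~$1$: showing $\delta(t)\le\beta$ needs good approximation infinitely often, which is the easy "$\limsup$ is large" direction, but showing $\delta(t)\ge\beta$ on a dimension-one set requires that $t$ is \emph{not} over-approximated, and controlling the size of the over-approximation set uniformly over the relevant range of exponents — together with handling the genuine (non-constant) variation of $\alpha(t)$ — is the delicate point. The randomness of the $V_i$ helps here: almost surely the family $(V_i)$ is sufficiently uniformly spread (a quantitative equidistribution / discrepancy estimate) that the ubiquity hypotheses needed for the mass transference principle hold, which is what makes the lower bound on $\dim_H$ go through. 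Everything else — the reduction via Lemmas~\ref{lem3} and~\ref{lem5}, the passage from $\delta$ to $h_Y$, and the countable union over parameters — is routine bookkeeping.
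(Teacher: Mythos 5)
Your overall strategy (reduce to $\delta(t)=h\,\alpha(t)$ via Lemmas \ref{lem3} and \ref{lem5}, localize near the level point where $h\,\alpha=1$, and run a Jarn\'{\i}k/ubiquity-type dimension computation for the approximation by the points $V_i$ at rate $\Gamma_i^{-1/\beta}$) is the right one, and it is essentially what underlies the paper's proof; the paper, however, does not reprove the ubiquity result but invokes Theorem 21 of Barral--Fournier--Jaffard--Seuret for the system $\mathcal{P}=\{(V_i,1/(1+\Gamma_i))\}$ (after the identification $\delta_t(\mathcal{P})=1/\delta(t)$ made in the proof of Lemma \ref{lem8}), applied with the \emph{non-constant} target function $g(t)=\min\bigl(1,\tfrac{1}{h\alpha(t)}\bigr)$ on an interval $I$ having as endpoint a point $t_0$ with $h\,\alpha(t_0)=1$; that theorem gives directly $\dim_H\{t\in I:\delta_t(\mathcal{P})=g(t)\}=\sup_{t\in I}h\,\alpha(t)=1$, hence $f_H(h)\geq 1$.

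There are two genuine gaps in your sketch. First, the exact-level issue: $F_h$ requires $h_Y(t)=h$ \emph{exactly}, i.e.\ $\delta(t)=h\,\alpha(t)$ with a non-constant right-hand side. Your construction on a shrinking interval where $\alpha$ is within $\eta_k$ of a constant $a_k$ only pins the exponent to within an error $\varepsilon_k$ of $h$; every point of your union lies in a single such interval and so has exponent only \emph{close} to $h$, and no diagonal or countable-union argument upgrades ``within $\varepsilon_k$ of $h$'' to ``equal to $h$''. Handling the varying target is precisely what forces the use of a localized ubiquity theorem with a continuous gauge $g(t)$ (or an equally delicate bespoke Cantor construction along which $\delta(t)$ tracks $h\,\alpha(t)$), and cannot be dismissed as bookkeeping. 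Second, your treatment of the two-sided control of $\delta$ is not correct as stated: the set of over-approximated points, $\{\delta(t)<\beta\}=\bigcup_n\{\delta(t)\leq\beta-\tfrac1n\}$, does not have dimension $0$; each piece has dimension up to $\beta-\tfrac1n$, so the removed set can have dimension arbitrarily close to (indeed equal to) that of the target limsup set, and knowing $\dim_H A\geq\beta$ and $\dim_H B\leq\beta$ gives no lower bound on $\dim_H(A\setminus B)$. The standard remedy is to construct a Frostman-type measure carried by a Cantor subset of the limsup set which charges no better-approximable point --- that is exactly the nontrivial content of the ubiquity machinery you defer to as ``standard'', and without it (or the citation of Theorem 21 of \cite{Bar}, as in the paper) the lower bound $f_H(h)\geq 1$ is not established.
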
 

\begin{lem}\label{lem10}
Almost surely, $f_H(\frac{1}{c})= \dim_{H}\left( \{ t \in [0,1] : \alpha(t) =c \} \right)$. 
\end{lem}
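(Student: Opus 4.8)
The plan is to use Lemmas~\ref{lem2}, \ref{lem3} and \ref{lem5} to identify the top level set $F_{1/c}$ explicitly, and then to compute its Hausdorff dimension by a first moment (Borel--Cantelli for measures) argument run against a Frostman measure carried by $\{\alpha=c\}$. First I would reduce the problem. Set $A=\{t\in[0,1]:\alpha(t)=c\}$; since $\alpha$ is continuous and $[0,1]$ compact, $A$ is nonempty and closed. By Lemmas~\ref{lem3} and~\ref{lem5}, $\h=\delta(t)/\alpha(t)$ for every $t\in[0,1]\setminus S$, and by Lemma~\ref{lem2}, $\delta(t)\le1$ there, so $\h=\delta(t)/\alpha(t)\le1/\alpha(t)\le1/c$. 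Hence, for $t\notin S$, $\h=1/c$ forces $\delta(t)=\alpha(t)/c\ge1$, so $\delta(t)=1$ and $\alpha(t)=c$; conversely these two conditions give $\h=1/c$. Thus $F_{1/c}\setminus S=\{t\in A\setminus S:\delta(t)=1\}$, and since $S$ is countable, $\dim_H F_{1/c}=\dim_H\bigl(\{t\in A\setminus S:\delta(t)=1\}\bigr)$. The upper bound $f_H(1/c)\le\dim_H A$ is then immediate, since this set is contained in $A$. Everything therefore reduces to proving $\dim_H\{t\in A\setminus S:\delta(t)=1\}\ge\dim_H A$ almost surely.

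For the lower bound, fix $s<\dim_H A$; by Frostman's lemma there is a nonatomic Borel probability measure $\mu$ carried by $A$ with $\mu(B(x,r))\le C r^{s}$ for all $x,r$. I would show that $\mu$ is almost surely carried by $\{t\in A\setminus S:\delta(t)=1\}$: by the mass distribution principle this gives $\dim_H\{t\in A\setminus S:\delta(t)=1\}\ge s$, and letting $s\uparrow\dim_H A$ along a countable sequence (so that the exceptional null set stays fixed) finishes the proof. Since $\mu$ is nonatomic and $S$ is countable, $\mu(S)=0$, so it suffices to prove $\mu(\{\delta<1\})=0$ a.s. Unwinding the definition of $\delta$, if $\delta(t)<\beta$ for some $\beta<1$ then, along a sequence $(V_{\phi(i)})\in\mathcal{R}_t$ nearly attaining the infimum, one has $-\log\phi(i)/\log|V_{\phi(i)}-t|<\beta$, i.e. $|V_{\phi(i)}-t|<\phi(i)^{-1/\beta}$, for infinitely many $i$; since $\phi(i)\to\infty$ this means $t\in\Lambda_\beta:=\limsup_{j\to\infty}B\bigl(V_j,j^{-1/\beta}\bigr)$. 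Hence $\{\delta<1\}\subseteq\bigcup_{n\ge2}\Lambda_{1-1/n}$, and it remains to show $\mu(\Lambda_\beta)=0$ a.s.\ for each fixed $\beta<1$.

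This last point is where the work is concentrated, but it is short. Using $\int_0^1\mu(B(v,r))\,dv=\int\bigl(\int_0^1\mathbf{1}_{\{|v-t|<r\}}\,dv\bigr)d\mu(t)\le2r$ together with the independence and uniform law of the $V_j$,
$$\E\Bigl[\sum_{j\ge1}\mu\bigl(B(V_j,j^{-1/\beta})\bigr)\Bigr]=\sum_{j\ge1}\int_0^1\mu\bigl(B(v,j^{-1/\beta})\bigr)\,dv\le 2\sum_{j\ge1}j^{-1/\beta}<\infty,$$
because $1/\beta>1$. So $\sum_j\mu\bigl(B(V_j,j^{-1/\beta})\bigr)<\infty$ a.s., which forces $\mu(\Lambda_\beta)=\mu\bigl(\bigcap_N\bigcup_{j\ge N}B(V_j,j^{-1/\beta})\bigr)=0$ a.s.; intersecting over $\beta=1-1/n$, $n\ge2$, gives $\mu(\{\delta<1\})=0$ a.s. In the degenerate case $\dim_H A=0$ (where the only thing to check is that $F_{1/c}\neq\emptyset$) I would instead fix a point $t_0\in A$, which almost surely avoids the countable random set $S$, and run the same Borel--Cantelli estimate with $\mu=\delta_{t_0}$ to get $\delta(t_0)=1$ a.s., whence $f_H(1/c)=0=\dim_H A$. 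The only genuinely delicate step is the first one of this last paragraph's predecessor, namely turning the somewhat baroque definition of $\delta$ into the clean limsup-covering inclusion $\{\delta<\beta\}\subseteq\Lambda_\beta$ (in particular checking $\phi(i)\to\infty$); the rest is routine. Combining the two bounds yields $f_H(1/c)=\dim_H\bigl(\{t\in[0,1]:\alpha(t)=c\}\bigr)$ almost surely.
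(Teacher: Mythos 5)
Your proof is correct, and while its skeleton matches the paper's (identify $F_{1/c}$, up to the countable set $S$, as $\{\alpha=c\}\cap\{\delta=1\}$ via Lemmas \ref{lem2}, \ref{lem3}, \ref{lem5}; get the upper bound for free; prove the lower bound by running a first-moment/Fubini argument against a measure carried by $\{\alpha=c\}$), the execution of the lower bound is genuinely different in two respects. First, where the paper extracts a compact subset $E_c\subset\{\alpha=c\}$ with $0<\mathcal{H}^s(E_c)<\infty$ (Falconer, Thm.\ 4.10) and works with $\mu_s=\mathcal{H}^s(E_c\cap\cdot)$, concluding $\mathcal{H}^s(\{\alpha=c\}\cap\{\delta=1\})>0$, you use a Frostman measure with $\mu(B(x,r))\le Cr^s$ and the mass distribution principle; these are parallel standard devices (yours needs Frostman's lemma, legitimate here since $\{\alpha=c\}$ is compact, the paper's needs the finite-positive-measure subset theorem). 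Second, and more substantively, the paper's key probabilistic input is the citation of Theorem 3.7 of \cite{LL2} (combined with Lemmas \ref{lem3} and \ref{lem5}) to get $\P(\delta(t)=1)=1$ for each fixed $t$, and then integrates this against $\mu_s$; you instead prove the needed fact directly and in averaged form, showing $\mu\bigl(\limsup_j B(V_j,j^{-1/\beta})\bigr)=0$ a.s.\ for every $\beta<1$ by the estimate $\E\bigl[\sum_j\mu(B(V_j,j^{-1/\beta}))\bigr]\le 2\sum_j j^{-1/\beta}<\infty$, after converting $\{\delta<\beta\}\subset\limsup_j B(V_j,j^{-1/\beta})$ (your check that $\phi(i)\to\infty$ off $S$ is exactly the point that makes this conversion licit). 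This makes your argument self-contained — it does not rely on the external theorem, and the same computation with $\mu=\delta_{t_0}$ handles the degenerate case $\dim_H\{\alpha=c\}=0$ by producing a point of $F_{1/c}$ — at the cost of a slightly longer covering argument; the paper's route is shorter given the cited result and also disposes of the zero-dimensional case by simply noting $F_{1/c}\neq\varnothing$.
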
 

\begin{lem}\label{lem11}
Almost surely, $\forall h > \frac{1}{c}, f_H(h)= -\infty$. 
\end{lem}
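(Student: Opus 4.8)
The plan is to establish the stronger statement that, with probability one, the level set $F_h$ is \emph{empty} for every $h>\frac1c$; the conclusion $f_H(h)=\dim_{H} F_h=-\infty$ then follows from the convention $\dim_{H}\emptyset=-\infty$ (consistent with the values $-\infty$ already obtained for $h<0$). The argument proceeds by bounding $h_Y(t)$ from above by $\frac1c$ uniformly in $t\in[0,1]$, treating separately the points of $S=\cup_i\{V_i\}$ and those of $[0,1]\setminus S$.

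On $[0,1]\setminus S$ I would combine Lemmas \ref{lem3} and \ref{lem5}, which together give $h_Y(t)=\frac{\delta(t)}{\alpha(t)}$ almost surely for every such $t$. Lemma \ref{lem2} yields $\delta(t)\le 1$, and since $\alpha(t)\ge c$ for all $t$ by the very definition $c=\inf_{u\in[0,1]}\alpha(u)$, one gets $h_Y(t)\le\frac{1}{\alpha(t)}\le\frac1c<h$. Hence no point of $[0,1]\setminus S$ lies in $F_h$.

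On $S$, each $V_i$ is a genuine jump time of $Y$: the $i$-th summand in the definition of $Y$ jumps by $\gamma_i\Gamma_i^{-1/\alpha(V_i)}$, which is almost surely nonzero, while — by the uniform convergence of the series established in the proof of Lemma \ref{lem1} together with the $\mathcal{C}^1$ regularity of $\alpha$ — the sum of the remaining summands is continuous in a neighbourhood of $V_i$. A c\`adl\`ag function that is bounded (as $Y$ is, being c\`adl\`ag on $[0,1]$) and has a nonzero jump at a point has pointwise H\"older exponent $0$ there, since its oscillation stays bounded away from $0$ as the scale shrinks. Thus $h_Y(V_i)=0<h$ and $V_i\notin F_h$ (the null events $\{V_i=0\}$ and $\{V_i=V_j,\ i\ne j\}$ are discarded). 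Intersecting the almost sure events of Lemmas \ref{lem1}, \ref{lem2}, \ref{lem3} and \ref{lem5}, we conclude that, almost surely and simultaneously for all $h>\frac1c$, $F_h=\emptyset$, whence $f_H(h)=-\infty$.

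The only delicate point is the treatment of $t\in S$: one must ensure that the partial sums converge uniformly enough near $V_i$ that deleting the $i$-th term leaves a function continuous at $V_i$. This is exactly the estimate underlying the proof of Lemma \ref{lem1}, so no genuinely new work is required; everything else is a direct bookkeeping combination of the earlier lemmas.
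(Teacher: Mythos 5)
Your proposal is correct and follows essentially the same route as the paper: bound $h_Y(t)\le\frac{1}{c}$ uniformly via Lemma \ref{lem2} and Lemma \ref{lem3} (Lemma \ref{lem5} is not actually needed), so that $F_h=\varnothing$ for $h>\frac1c$. Your explicit treatment of the jump points $t\in S$ (where $h_Y(t)=0$, cf.\ Note \ref{jumps}) just spells out a detail the paper leaves implicit.
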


\subsection{Proofs of the lemmas}

\noindent  {\bf Proof of Lemma \ref{lem1}:}

 Set $Y_N(t) = \sum_{i=1}^{N}\limits \gamma_i \Gamma_i^{-1/\alpha(t)} \mathbf{1}_{[0,t]}(V_i).$
 Lemma 8 in \cite{LGLVL} states that, amost surely, $Y_N$ converges to $Y(t)$ uniformly on $[0,1]$.
 
 Fix $\varepsilon >0$ and choose $N_0 \in \bbbn$ such that, $\forall N \geq N_0$, 
 \begin{equation}\label{inegalunif}
  \sup_{t \in [0,1]}\limits |Y_N(t) - Y(t) | \leq \frac{\varepsilon}{3}.
 \end{equation}

 {\underline{$1^{\textrm{st}}$ case :} $t \in S$.
 
 Let $i_0 \in \bbbn$ be such that $t=V_{i_0}$, and $N_1=\max (i_0,N_0)$.
 Then, for $h \in \bbbr$,
 $$Y(t) - Y(t+h) =  Y(t) - Y_{N_1}(t) + Y_{N_1}(t) - Y_{N_1}(t+h) + Y_{N_1}(t+h) - Y(t+h).$$
 
 Since $\lim_{h \rightarrow 0^+}\limits Y_{N_1}(t) - Y_{N_1}(t+h)=0$, there exists $h_0 >0$ such that $\forall h \in (0,h_0)$,
 $$ |Y_{N_1}(t) - Y_{N_1}(t+h)| \leq \frac{\varepsilon}{3}.$$
 
As a consequence, $\forall h \in (0,h_0)$, $ |Y(t) - Y(t+h)| \leq \varepsilon$ and thus $\lim_{h \rightarrow 0^+}\limits Y(t) - Y(t+h)=0.$
 
$\lim_{h \rightarrow 0^-}\limits Y_{N_1}(t) - Y_{N_1}(t+h)= \lim_{h \rightarrow 0^-}\limits \sum_{i=1}^{N_1}\limits \gamma_i \Gamma_i^{-1/ \alpha(t)}\mathbf{1}_{(t+h,t]}(V_i) = \gamma_{i_0} \Gamma_{i_0}^{-1/ \alpha(V_{i_0})},$
thus $$\lim_{h \rightarrow 0^-}\limits Y_{N_1}(t) - Y_{N_1}(t+h) - \gamma_{i_0} \Gamma_{i_0}^{-1/ \alpha(V_{i_0})} =0.$$
 
 Choose $h_0 <0$ such that $\forall h \in (h_0,0)$,
 $$ |Y(t) - Y(t+h) - \gamma_{i_0} \Gamma_{i_0}^{-1/ \alpha(V_{i_0})}| \leq \varepsilon.$$
 
 Thus, $$\lim_{h \rightarrow 0^-}\limits Y(t) - Y(t+h) = \gamma_{i_0} \Gamma_{i_0}^{-1/ \alpha(V_{i_0})}.$$
 
 {\underline{$2^{\textrm{nd}}$ case :} $t \notin S$
 
 Since $\lim_{h \rightarrow 0}\limits |Y_{N_0}(t+h) - Y_{N_0}(t)|=0$, there exists  $h_0 >0$ such that $\forall |h| < h_0$,
$$|Y_{N_0}(t) - Y_{N_0}(t+h)| \leq \frac{\varepsilon}{3}.$$ 
Using (\ref{inegalunif}), one thus has, for $|h| < h_0$,  
 $|Y(t) - Y(t+h)| \leq \varepsilon$, and thus $\lim_{h \rightarrow 0}\limits |Y(t+h) - Y(t)|=0$ \Box

 \begin{note}\label{jumps}
We have shown precisely that $Y$ is càdlàg with set of jump points exactly equal to $S$. The jump at point $V_i$ is of size $\gamma_i \Gamma_i^{-1/ \alpha(V_i)}$.
  \end{note}

\bigskip

\noindent  {\bf Proof of Lemma \ref{lem2}:}

 For $j\geq 1$, $k=1,...,2^j$, let $E_{k,j}$ denote the interval
 $$E_{k,j} = \left[ \frac{k-1}{2^j} - \frac{1}{2^{1+(j+1)(1-\frac{1}{\sqrt{j}})}} , \frac{k-1}{2^j} + \frac{1}{2^{1+(j+1)(1-\frac{1}{\sqrt{j}})}}\right).$$
 
 Let us show that $\liminf_{j \to \infty}\limits \bigcap_{k=1}^{2^j}\limits \bigcup_{i=2^j}^{2^{j+1}-1}\limits \{ V_i \in E_{k,j} \}  \subset \{ \forall t \notin S, \delta(t) \leq 1 \}$ first, and then 
 that $\P \left( \liminf_{j \to \infty}\limits \bigcap_{k=1}^{2^j}\limits \bigcup_{i=2^j}^{2^{j+1}-1}\limits \{ V_i \in E_{k,j} \} \right) =1.$
 We denote $a_j = \frac{1}{2^{1+(j+1)(1-\frac{1}{\sqrt{j}})}}.$
 
 Assume that there exists $J_0 \in \bbbn$ such that for all $j \geq J_0$, and all $k=1,...,2^j$, we can fix $i(k,j) \in [2^j, 2^{j+1}-1 ]$ with $V_{i(k,j)} \in E_{k,j}$.
 Let $t \notin S$. For all $j \geq J_0$, there exists $k(j) \in [1,2^j]$ such that $t \in E_{k(j),j}$, because $2a_j \geq \frac{1}{2^j}$.
 
As a consequence, $$|t-V_{i(k(j),j)}| \leq \frac{1}{2^{(j+1)(1-\frac{1}{\sqrt{j}})}} \leq \frac{1}{i(k(j),j)^{1-\frac{1}{\sqrt{j}}}}.$$ Hence
 $ - \frac{\log i(k(j),j) }{\log |t-V_{i(k(j),j)}|} \leq \frac{1}{1-\frac{1}{\sqrt{j}}} .$ This entails 
 $\liminf_{j \to \infty}\limits  - \frac{\log i(k(j),j) }{\log |t-V_{i(k(j),j)}|} \leq 1$, and, since $V_{i(k(j),j)}$ tends to $t$, $\delta(t) \leq 1$.
 
 Finally, distinguishing the cases $k=1$, $k=2, \ldots, 2^j-1$ and $k=2^j$, one estimates
 
 \begin{eqnarray*}
  \P \left( \bigcup_{k=1}^{2^j}\limits \bigcap_{i=2^j}^{2^{j+1}-1}\limits \{ V_i \notin E_{k,j} \} \right) & \leq & \sum_{k=1}^{2^j}\limits \P \left( \bigcap_{i=2^j}^{2^{j+1}-1}\limits \{ V_i \notin E_{k,j} \} \right) \\
 & \leq & a_j + (1-( \frac{2^j -1}{2^j}- a_j) ) + \sum_{k=2}^{2^j -1}\limits \left( \P \left( \{ V_1 \notin E_{k,j} \} \right) \right)^{2^j} \\
 & \leq & 2 a_j + \frac{1}{2^j} + (2^j -2)(1-2a_j)^{2^j}.
 \end{eqnarray*}
Since $\sum_{j=1}^{+ \infty}\limits (2^j -2)(1-2a_j)^{2^j} < +\infty,$ Borel-Cantelli lemma allows us to conclude. \Box

\bigskip 

\noindent {\bf Proof of Lemma \ref{lem3}:}
 
Recall Note \ref{jumps}. Lemma 1 of \cite{Jaff1} entails that, for all sequences $V_{\phi(i)} \in \mathcal{R}_t$, 
 and all $t \notin S$, 
 \begin{eqnarray*}
  \h & \leq & \liminf_{i \rightarrow +\infty}\limits  \frac{\log | \Gamma_{\phi(i)}^{-\frac{1}{\alpha(V_{\phi(i)})}}|}{\log | V_{\phi(i)} -t|}\\
  & =& \liminf_{i \rightarrow +\infty}\limits -\frac{1}{\alpha(V_{\phi(i)})} \frac{\log | \Gamma_{\phi(i)} |}{\log | V_{\phi(i)} -t| }.
 \end{eqnarray*}
Since $\alpha$ is continuous, $\phi(i)$ tends to infinity, the sequences $(V_{\phi(i)})_i$ converges to $t$, 
and almost surely $(\frac{\Gamma_i}{i})_i$ tends to $1$  when $i$ tends to infinity, one obtains
$$\h \leq \frac{1}{\alpha(t)} \liminf_{i \rightarrow +\infty}\limits - \frac{\log | \phi(i) |}{\log | V_{\phi(i)} -t| }.$$

This inequality holds for all sequences $V_{\phi(i)} \in \mathcal{R}_t$, and thus,
$\forall t \notin S$, $\h \leq \frac{\delta(t)}{\alpha(t)}$ \Box

\bigskip 
 
\noindent {\bf Proof of Lemma \ref{lem4}:}

Since $g$ is càdlàg, $h_g$ is non negative for all $t$. Integration increases pointwise regularity by at least one, and thus
$h_f(t) \geq 1$ for all $t$. An alternative direct proof goes as follows:  
let $t \in (0,1)$, and $h >0$. One computes
 
 \begin{eqnarray*}
  \frac{f(t+h) - f(t)}{h} - g(t^+) & = & \frac{1}{h} \int_{t}^{t+h} (g(x) - g(t^+)) dx\\
  & = & \int_{0}^{1} (g(t+sh) - g(t^+) )ds.
 \end{eqnarray*}

 $\forall s \in (0,1)$, $\lim_{h \rightarrow 0^+}\limits (g(t+sh) - g(t^+)) = 0$. Since $g$ is càdlàg, it is bounded and thus: 
 $$\lim_{h \rightarrow 0^+}\limits \frac{f(t+h) - f(t)}{h} = g(t^+).$$
 
Likewise, for  $h<0$, 
 $$\frac{f(t+h) - f(t)}{h} - g(t^-) = \int_{0}^{1} (g(t+sh) - g(t^-) )ds,$$
 and $$\lim_{h \rightarrow 0^-}\limits \frac{f(t+h) - f(t)}{h} = g(t^-).$$
 This entails $h_f(t) \geq 1$ \Box
 
\bigskip
 
 \noindent {\bf Proof of Lemma \ref{lem5}:}

Theorem 7 of \cite{LGLVL} states that:
 $$ D(t) = A(t) + B(t),$$
 where 
 $$A(t) = \int_{0}^{t} \sum_{i=1}^{+\infty}\limits \gamma_i \frac{d\left( C_{\alpha(.)}^{1/\alpha(.)} \Gamma_i^{-1/ \alpha(.)}\right)}{dt} (s) \mathbf{1}_{[0,s)}(V_i) \ ds.$$

In addition, Lemma 8 of \cite{LGLVL} entails that $\left( \sum_{i=1}^{N}\limits \gamma_i \frac{d\left( C_{\alpha(.)}^{1/\alpha(.)} \Gamma_i^{-1/ \alpha(.)}\right)}{dt} (s) \mathbf{1}_{[0,s)}(V_i) \right)_N$ 
 converges to $ \sum_{i=1}^{+\infty}\limits \gamma_i \frac{d\left( C_{\alpha(.)}^{1/\alpha(.)} \Gamma_i^{-1/ \alpha(.)}\right)}{dt} (s) \mathbf{1}_{[0,s)}(V_i)$  uniformly on $[0,1]$ .
The same proof as the one of Lemma \ref{lem1} shows that $s \mapsto \sum_{i=1}^{+\infty}\limits \gamma_i \frac{d\left( C_{\alpha(.)}^{1/\alpha(.)} \Gamma_i^{-1/ \alpha(.)}\right)}{dt} (s) \mathbf{1}_{[0,s)}(V_i)$ is 
càdlàg. Lemma \ref{lem4} then entails that $h_A(t) \geq 1$. Since $c>1$, it is thus sufficient to show that
$h_B(t) \geq \frac{\delta(t)}{\alpha(t)}$.
 
Write $B(t) = W(t) + Z(t)$ where 
\begin{equation}\label{eq:W}
W(t) = \sum_{i=1}^{+\infty}\limits \gamma_i C_{\alpha(V_i)}^{1/\alpha(V_i)} \left( \Gamma_i^{-1/ \alpha(V_i)} - i^{-1/ \alpha(V_i)} \right) \mathbf{1}_{(V_i \leq t)}
\end{equation}
and
\begin{equation}\label{eq:Z}
Z(t) = \sum_{i=1}^{+\infty}\limits \gamma_i C_{\alpha(V_i)}^{1/\alpha(V_i)} i^{-1/ \alpha(V_i)} \mathbf{1}_{(V_i \leq t)}.
\end{equation}
 
Set $I_{k,m} = [\frac{k}{2^m}, \frac{k+1}{2^m})$, $N_{m,j,k} = \textrm{Card}\left\{ V_i, i=2^j,...,2^{j+1}-1, V_i \in I_{k,m} \right\}$, $d_{k,m} = \max_{u \in I_{k,m}}\limits \alpha(u)$
 and $C_0 = \max_{t \in [0,1]}\limits C_{\alpha(t)}^{1/ \alpha(t)}$. Define 
 $$M_t^{m,j,k} = \sum_{i=2^j}^{2^{j+1}-1}\limits \gamma_i C_{\alpha(V_i)}^{1/\alpha(V_i)} i^{-1/ \alpha(V_i)} \mathbf{1}_{[0,t]\cap I_{k,m}}(V_i).$$
It is easily seen that 
 
 \begin{equation}\label{inegalmart}
 \sup_{(s,t) \in I_{k,m}^2}\limits \left|\sum_{i=2^j}^{2^{j+1}-1}\limits \gamma_i C_{\alpha(V_i)}^{1/\alpha(V_i)} i^{-1/ \alpha(V_i)} \mathbf{1}_{[s,t)}(V_i) \right| \leq 2 \sup_{t \in [0,1]} \left| M_t^{m,j,k} \right|.
\end{equation}

For $t \in (0,1)$, let $\alpha_n(t) = \sqrt{n} \left[ \frac{1}{n} \sum_{i=1}^{n}\limits \mathbf{1}_{V_i \leq t} - t\right]$
denote the empirical process, and $w_n(a) = \sup_{|t-s| \leq a}\limits | \alpha_n(t) - \alpha_n(s)|$ denote 
the oscillation modulus of $\alpha_n$.
We apply Lemma 2.4 of \cite{Stu} with 
$$ m \geq 3, a=2^{-m}, s=m^{1/4} \sqrt{j}, n=2^j, \delta=\frac{1}{2}.$$
This yields that there exists $ M_0 \in \bbbn$ such that
$$\forall \ m \geq M_0, \ \exists \ j(m) \mbox{ with } m \leq j(m) \leq 2m \mbox{ such that } \forall \ j \geq j(m): \ m^{1/4} \sqrt{j} \leq \sqrt{2^{j-m}}$$
and
\begin{equation}\label{inegalStu}
 \P \left( \sup_{0 \leq |t-s| \leq \frac{1}{2^m}}\limits | \alpha_{2^j}(t) - \alpha_{2^j}(s)|> \frac{m^{1/4} \sqrt{j}}{\sqrt{2^m}}\right) \leq 256 \times 2^m e^{- \frac{j\sqrt{m}}{64}}.
\end{equation}
We need to estimate $N_{m,j,k}$ and $\sup_{(s,t) \in I_{k,m}^2}\limits \left|\sum_{i=2^j}^{2^{j+1}-1}\limits \gamma_i C_{\alpha(V_i)}^{1/\alpha(V_i)} i^{-1/ \alpha(V_i)} \mathbf{1}_{[s,t)}(V_i) \right|$
for $k=0,...,2^m-1$ and $j \geq m$.

\noindent $\bullet$ \underline{Study of $N_{m,j,k}$ for $m \leq j \leq j(m)-1$:}

Set $X_i = \mathbf{1}_{\frac{k}{2^m} \leq V_i \leq \frac{k+1}{2^m}}$ and $n=2^j$. $(X_i)_i$ is an i.i.d. sequence of
Bernoulli random variables with parameter $p=\frac{1}{2^m}$. For $m \leq j \leq j(m)-1$, one has
$\sqrt{m} j \geq 2^{j-m} =np$, and thus, for $a>0$, using a classical bound on the sum of i.i.d. Bernoulli random
variables,
 \begin{eqnarray*}
   \P \left( N_{m,j,k} > a \sqrt{m}j \right) &\leq &\P \left( \sum_{i=2^j}^{2^{j+1}-1}\limits X_i \geq a 2^{j-m} \right) \\
   & = & \P \left( \sum_{i=2^j}^{2^{j+1}-1}\limits X_i \geq a n p \right) \\
	 &\leq & \frac{1}{a^n}\\
     & \leq & \frac{1}{a^j}.
 \end{eqnarray*}
 
As a consequence,
  \begin{eqnarray*}
     \P \left( \bigcup_{j=m}^{j(m)-1}\limits \bigcup_{k=0}^{2^m-1}\limits \left\{ N_{m,j,k} > a \sqrt{m}j \right\} \right) & \leq & \sum_{j=m}^{2m}\limits \sum_{k=0}^{2^{m}-1}\limits \frac{1}{a^j}\\
     & \leq & 2^m \sum_{j=m}^{+\infty}\limits a^{-j}\\
     & \leq & \frac{a}{a-1} \left(\frac{2}{a}\right)^m.
  \end{eqnarray*}
 
Choosing $a=3$, Borel Cantelli lemma entails that
$$\P \left(\limsup_{m \rightarrow +\infty}\limits   \bigcup_{j=m}^{j(m)-1}\limits \bigcup_{k=0}^{2^m-1}\limits \left\{ N_{m,j,k} > 3 \sqrt{m}j \right\}\right) = 0.$$

\noindent $\bullet$ \underline{Study of $N_{m,j,k}$ for $j \geq j(m)$:}

\begin{eqnarray*}
     \P \left( N_{m,j,k} > 2.2^{j-m} \right) & = & \P \left( \sqrt{2^j} \left( \alpha_{2^j}\left(\frac{k+1}{2^m}\right) 
     - \alpha_{2^j}\left(\frac{k}{2^m}\right)\right) + 2^{j-m} \geq 2.2^{j-m} \right) \\
& \leq & \P \left( \sqrt{2^j} w_{2^j}(\frac{1}{2^m}) \geq 2^{j-m} \right)\\
& = & \P \left( w_{2^j}(\frac{1}{2^m}) \geq \frac{\sqrt{2^{j-m}}}{\sqrt{2^m}} \right)\\
& \leq & \P \left( w_{2^j}(\frac{1}{2^m}) \geq \frac{m^{1/4}\sqrt{j}}{\sqrt{2^m}} \right)\\
& \leq & 256 \times 2^m e^{- \frac{j\sqrt{m}}{64}}
\end{eqnarray*}
using (\ref{inegalStu}). As a consequence, 

\begin{eqnarray*}
\P \left( \bigcup_{j=j(m)}^{+\infty}\limits \bigcup_{k=0}^{2^m-1}\limits \left\{ N_{m,j,k} > 2 2^{j-m} \right\} \right) & \leq & \sum_{j=m}^{+\infty}\limits 2^m (256 \times 2^m e^{- \frac{j\sqrt{m}}{64}})\\
& \leq & 256. 4^m . \frac{e^{- \frac{m\sqrt{m}}{64}}}{1-e^{- \frac{\sqrt{m}}{64}}}.
\end{eqnarray*}
Borel Cantelli lemma yields 
$$\P \left(\limsup_{m \rightarrow +\infty}\limits   \bigcup_{j=j(m)}^{+\infty}\limits \bigcup_{k=0}^{2^m-1}\limits \left\{ N_{m,j,k} \geq 2. 2^{j-m}\right\}\right) = 0.$$

\noindent $\bullet$ \underline{Study of $\sup_{(s,t) \in I_{k,m}^2}\limits \left|\sum_{i=2^j}^{2^{j+1}-1}\limits \gamma_i C_{\alpha(V_i)}^{1/\alpha(V_i)} i^{-1/ \alpha(V_i)} \mathbf{1}_{[s,t)}(V_i) \right| $ for $m \leq j \leq j(m)-1$:}

Almost surely, there exists $m_0$ such that, for $m\geq m_0$ and for
$m \leq j \leq j(m)-1$, $\forall k=0,...,2^m-1$, $N_{m,j,k} \leq 3 \sqrt{m} j$, 
and $\forall (s,t) \in I_{k,m}^2$, $\forall i=2^j,...,2^{j+1}-1$,

$$ \left| \gamma_i C_{\alpha(V_i)}^{1/\alpha(V_i)} i^{-1/ \alpha(V_i)} \mathbf{1}_{[s,t)}(V_i) \right| \leq \frac{C_0}{2^{\frac{j}{d_{k,m}}}}$$
thus 

$$\sup_{(s,t) \in I_{k,m}^2}\limits \left|\sum_{i=2^j}^{2^{j+1}-1}\limits \gamma_i C_{\alpha(V_i)}^{1/\alpha(V_i)} i^{-1/ \alpha(V_i)} \mathbf{1}_{[s,t)}(V_i) \right| \leq  \frac{ 3 \sqrt{m} j C_0}{2^{\frac{j}{d_{k,m}}}}.$$

\noindent $\bullet$ \underline{Study of $\sup_{(s,t) \in I_{k,m}^2}\limits \left|\sum_{i=2^j}^{2^{j+1}-1}\limits \gamma_i C_{\alpha(V_i)}^{1/\alpha(V_i)} i^{-1/ \alpha(V_i)} \mathbf{1}_{[s,t)}(V_i) \right| $ for $j \geq j(m)$:}

We consider the events $$E_{m,j,k} = \left\{ N_{m,j,k} \leq 2.2^{j-m} \right\},$$ 
$$F_{m,j,k} = \left\{ \sup_{(s,t) \in I_{k,m}^2}\limits \left|\sum_{i=2^j}^{2^{j+1}-1}\limits \gamma_i C_{\alpha(V_i)}^{1/\alpha(V_i)} i^{-1/ \alpha(V_i)} \mathbf{1}_{[s,t)}(V_i) \right| \geq \frac{j C_0 \sqrt{2.2^{j-m}}}{2^{\frac{j}{d_{k,m}}}} \right\}$$ 
and
$$G_{m,j,k} = \left\{ \sup_{t \in [0,1]}\limits \left|M_t^{m,j,k} \right| \geq \frac{j C_0 \sqrt{2.2^{j-m}}}{2.2^{\frac{j}{d_{k,m}}}} \right\}.$$ 

\noindent Relation (\ref{inegalmart}) entails that $\P \left( F_{m,j,k} \cap E_{m,j,k}\right) \leq \P \left( G_{m,j,k} \cap E_{m,j,k}\right).$

\noindent In the following computation, $l$ corresponds to the number of terms $V_i$ belonging to $I_{k,m}$, and $n$ 
corresponds to the number of those $V_i$ among them that contribute to the supremum of $G_{m,j,k}$.

$$G_{m,j,k} \cap E_{m,j,k} \subset \bigcup_{l=1}^{2.2^{j-m}}\limits \bigcup_{l_1,...,l_l \in [2^j,2^{j+1}-1]}\limits \left[ G_{m,j,k} \cap \left( \bigcap_{i \in \{l_1,...l_l \}}\limits  V_i \in I_{k,m}\right) \cap \left(\bigcap_{i \notin \{l_1,...l_l \}}\limits  V_i \notin I_{k,m} \right)\right].$$

Using independence of the $V_i$, 

$$\P \left( G_{m,j,k} \cap E_{m,j,k} \right) \leq \sum_{l=1}^{2.2^{j-m}}\limits \sum_{l_1,...,l_l \in [2^j,2^{j+1}-1]}\limits \P \left(  G_{m,j,k} \cap  \bigcap_{i \in \{l_1,...l_l \}}\limits  V_i \in I_{k,m} \right) \P \left(  \bigcap_{i \notin \{l_1,...l_l \}}\limits  V_i \notin I_{k,m} \right).$$

Now, $\P \left(  \bigcap_{i \notin \{l_1,...l_l \}}\limits  V_i \notin I_{k,m} \right) = \left( 1- \frac{1}{2^m}\right)^{2^j-l}$.
Let us fix an order on the $V_i$ belonging to $I_{k,m}$: there are $l!$ possibilities, all equiprobable, and thus 
$$\P \left(  G_{m,j,k} \cap  \bigcap_{i \in \{l_1,...l_l \}}\limits  V_i \in I_{k,m} \right) = (l!) \P \left( G_{m,j,k} \cap ( \frac{k}{2^m} < V_{l_1} < ... < V_{l_l} < \frac{k+1}{2^m})\right).$$

Let $A_{k,m}^l$ denote the event $\{  \frac{k}{2^m} < V_{l_1} < ... < V_{l_l} < \frac{k+1}{2^m} \}$. Then

\begin{eqnarray*} 
\P \left(  G_{m,j,k} \cap A_{k,m}^l \right) &\leq& \sum_{n=1}^{l}\limits \P \left( A_{k,m}^l \cap \left| \sum_{i=1}^{n}\limits \gamma_{l_i} C_{\alpha(V_{l_i})}^{1/\alpha(V_{l_i})} l_i^{-1/ \alpha(V_{l_i})} \right| \geq  \frac{j C_0 \sqrt{2.2^{j-m}}}{2.2^{\frac{j}{d_{k,m}}}}\right) \\
& \leq & \sum_{n=1}^{l}\limits  \int_{ \frac{k}{2^m} < x_1 < ... < x_l < \frac{k+1}{2^m} }\limits \P \left( \left| \sum_{i=1}^{n}\limits \gamma_{l_i} C_{\alpha(x_i)}^{1/\alpha(x_i)} l_i^{-1/ \alpha(x_i)} \right| \geq  \frac{j C_0 \sqrt{n}}{2.2^{\frac{j}{d_{k,m}}}} \right) dx_1 ... dx_l    .
\end{eqnarray*}
The probability inside the integral above may be estimated with the help of Lemma 1.5 in \cite{LTal}:

$$ \P \left( \left| \sum_{i=1}^{n}\limits \gamma_{l_i} \frac{C_{\alpha(x_i)}^{1/\alpha(x_i)}}{C_0} \frac{2^{\frac{j}{d_{k,m}}}}{l_i^{1/ \alpha(x_i)}} \right| \geq  \frac{j \sqrt{n}}{2} \right) \leq 2e^{-\frac{j^2}{8}}.$$

One then computes
\begin{eqnarray*} 
 \P \left( G_{m,j,k} \cap E_{m,j,k} \right) &\leq &\sum_{l=1}^{2.2^{j-m}}\limits \sum_{l_1,...,l_l \in [2^j,2^{j+1}-1]}\limits (1-\frac{1}{2^m})^{2^j-l} (l!) \sum_{n=1}^{l}\limits 2e^{-\frac{j^2}{8}} \int_{ \frac{k}{2^m} < x_1 < ... < x_l < \frac{k+1}{2^m} }\limits dx_1 ... dx_l \\
 & \leq & \sum_{l=1}^{2.2^{j-m}}\limits \frac{(2^j)!}{(l!)(2^j-l)!} (1-\frac{1}{2^m})^{2^j-l} (l!) 2l e^{-\frac{j^2}{8}} \frac{1}{2^{ml}} \frac{1}{l!} \\
 & \leq & 2 e^{-\frac{j^2}{8}} \sum_{l=1}^{2.2^{j-m}}\limits l \frac{(2^j)!}{(l!)(2^j-l)!} (1-\frac{1}{2^m})^{2^j-l} \frac{1}{2^{ml}} \\
 & \leq & 4. 2^{j-m} e^{-\frac{j^2}{8}} \sum_{l=1}^{2.2^{j-m}}\limits \frac{(2^j)!}{(l!)(2^j-l)!} (1-\frac{1}{2^m})^{2^j-l} \frac{1}{2^{ml}} \\
 & \leq & 4. 2^{j-m} e^{-\frac{j^2}{8}}.
\end{eqnarray*}
As a consequence, 
\begin{eqnarray*} 
 \P \left( \bigcup_{j=j(m)}^{+\infty}\limits \bigcup_{k=0}^{2^m-1}\limits \left( F_{m,j,k} \cap E_{m,j,k}\right) \right) & \leq & 4 \sum_{j=m}^{+\infty}\limits 2^j e^{- \frac{j^2}{8}}\\
 & \leq & \sum_{j=m}^{+\infty}\limits e^{-j}
 \end{eqnarray*}
for $m \geq 100$. Borel Cantelli lemma entails that

$$\P \left(\limsup_{m \rightarrow +\infty}\limits \bigcup_{j=j(m)}^{+\infty}\limits \bigcup_{k=0}^{2^m-1}\limits F_{m,j,k} \right) =0.$$

\noindent $\bullet$ \underline{Computation of the \ho exponent:}

Let $t \in (0,1)$, $t \notin S$ and let $U$ be an open interval of $(0,1)$ containing $t$. 
Denote $d_U = \max_{t \in U}\limits \alpha(t)$. If $\delta(t) = 0$, then $\h=0$ and the formula holds. 
Suppose now $\delta(t) >0$. Let $\varepsilon >0$ be such that $\delta(t) > \varepsilon$ 
and $\frac{1}{d_U}- \frac{1}{2} > \varepsilon$. There exists $i_0 \in \bbbn$ such that $\forall i \geq i_0$, $|t-V_i| \geq \frac{1}{i^{\frac{1}{\delta(t) - \varepsilon}}}$. 
Choose $m$ large enough so that $\frac{1}{2^m} < \min \{ |t-V_i|, i=1,...,i_0 \}$.
Let $j_0 = [ m(\delta(t)- \varepsilon)]$. Increasing $m$ if necessary, we may and will assume that
$i_0 \leq 2^{j_0}$, and $\forall j \geq j_0$, $j \leq 2^{j \varepsilon}$. 
Let $s \in (0,1)$ be such that $\frac{1}{2^{m+2}} \leq |t-s| < \frac{1}{2^{m+1}}$. There exists
$k \in \{0,...,2^m-1\}$ such that $(t,s) \in I_{k,m}^2$. Increasing again $m$ if necessary, we may 
assume that $I_{k,m} \subset U$. 
Then $d_{k,m} \leq d_U$, and, for $i \leq 2^{j_0}$, $\mathbf{1}_{[s,t)}(V_i) =0$. One computes:

\begin{eqnarray*}
 \left| Z(t)- Z(s) \right| & = & \left| \sum_{i=1}^{+\infty}\limits \gamma_i C_{\alpha(V_i)}^{1/\alpha(V_i)} i^{-1/ \alpha(V_i)} \mathbf{1}_{[s,t)}(V_i)\right| \\
 & = & \left| \sum_{j=j_0}^{+\infty}\limits \sum_{i=2^j}^{2^{j+1}-1}\limits \gamma_i C_{\alpha(V_i)}^{1/\alpha(V_i)} i^{-1/ \alpha(V_i)} \mathbf{1}_{[s,t)}(V_i)\right| \\
 & \leq & \sum_{j=j_0}^{j(m)}\limits \left| \sum_{i=2^j}^{2^{j+1}-1}\limits \gamma_i C_{\alpha(V_i)}^{1/\alpha(V_i)} i^{-1/ \alpha(V_i)} \mathbf{1}_{[s,t)}(V_i)\right| + \sum_{j=j(m)}^{+\infty}\limits \left|\sum_{i=2^j}^{2^{j+1}-1}\limits \gamma_i C_{\alpha(V_i)}^{1/\alpha(V_i)} i^{-1/ \alpha(V_i)} \mathbf{1}_{[s,t)}(V_i)\right|\\
 & \leq & \sum_{j=j_0}^{j(m)}\limits \frac{3 \sqrt{m} j }{2^{\frac{j}{d_{k,m}}}}+ \sum_{j=j(m)}^{+\infty}\limits \frac{j C_0 \sqrt{2.2^{j-m}} }{2^{\frac{j}{d_{k,m}}}}\\
 & \leq & 2(3+\sqrt{2} C_0) \sqrt{m} \sum_{j=j_0}^{+\infty}\limits \frac{j \sqrt{2^{j-m}} }{2^{\frac{j}{d_{k,m}}}}\\
 & \leq & \frac{6(1+C_0) \sqrt{m}}{\sqrt{2^m}} \sum_{j=j_0}^{+\infty}\limits 2^{j(\varepsilon + \frac{1}{2} - \frac{1}{d_U})}\\
 & \leq & \frac{6(1+C_0) \sqrt{m}}{\sqrt{2^m}}  \frac{2^{j_0(\varepsilon + \frac{1}{2} - \frac{1}{d_U})}}{1- 2^{\varepsilon + \frac{1}{2} - \frac{1}{d_U}}}\\
 & \leq & K_{U,\varepsilon,C_0} \sqrt{m} 2^{m \left[ -\frac{1}{2} + (\delta(t) - \varepsilon)(\varepsilon + \frac{1}{2} - \frac{1}{d_U}) \right]}\\
& \leq & K_{U,\varepsilon,C_0} \sqrt{| \log |t-s||} |t-s|^{(\delta(t) - \varepsilon)(\frac{1}{d_U} - \varepsilon) + \frac{1}{2} - \frac{1}{2}(\delta(t) - \varepsilon)}.
 \end{eqnarray*}
Since $\frac{1}{2} - \frac{1}{2}(\delta(t) - \varepsilon) > 0$, 
$$\left| Z(t)- Z(s) \right| \leq K_{U,\varepsilon,C_0} \sqrt{| \log |t-s||} |t-s|^{(\delta(t) - \varepsilon)(\frac{1}{d_U} - \varepsilon) }.$$

Let us now study $W$ (recall \eqref{eq:W}): there exists $i_0$ such that, for all $i \geq i_0$, 
\begin{eqnarray*}
 \left| \Gamma_i^{-1/\alpha(V_i)} - i^{-1/\alpha(V_i)} \right| & \leq & \frac{K_{c,d}}{i^{\frac{1}{d_{k,m}}+\frac{1}{2}}}\\
 & \leq & \frac{K_{c,d}}{i^{\frac{1}{d_U}+\frac{1}{2}}}.
\end{eqnarray*}

One then computes: 

\begin{eqnarray*}
 \left| W(t)- W(s) \right| & = & \left| \sum_{j=j_0}^{+\infty}\limits \sum_{i=2^j}^{2^{j+1}-1}\limits \gamma_i C_{\alpha(V_i)}^{1/\alpha(V_i)} ( \Gamma_i^{-1/ \alpha(V_i)} - i^{-1/ \alpha(V_i)}) \mathbf{1}_{[s,t)}(V_i)\right| \\
& \leq & \sum_{j=j_0}^{j(m)}\limits \frac{3 K_{c,d} C_0 \sqrt{m} j }{2^{j(\frac{1}{2} + \frac{1}{d_U} )}}+ \sum_{j=j(m)}^{+\infty}\limits \frac{K_{c,d} C_0 2.2^{j-m} }{2^{j(\frac{1}{2} + \frac{1}{d_U} )}}\\
& \leq & K \sqrt{m} \sum_{j=j_0}^{+\infty}\limits 2^{j(\varepsilon - \frac{1}{2} - \frac{1}{d_U})} + \frac{K}{2^m} \sum_{j=j_0}^{+\infty}\limits 2^{j(\frac{1}{2} - \frac{1}{d_U})}\\
& \leq & K \sqrt{|\log |t-s| |} \frac{2^{m(\delta(t)-\varepsilon)(\varepsilon - \frac{1}{2} - \frac{1}{d_U})}}{1-2^{\varepsilon - \frac{1}{2} - \frac{1}{d_U}}} + K |t-s| \frac{2^{m(\delta(t) - \varepsilon)(\frac{1}{2} - \frac{1}{d_U})}}{1-2^{\frac{1}{2} - \frac{1}{d_U}}}\\
& \leq & K \sqrt{|\log |t-s| |} |t-s|^{(\delta(t)-\varepsilon)(\frac{1}{2} + \frac{1}{d_U}-\varepsilon)} + K |t-s|^{1+(\delta(t) - \varepsilon)( \frac{1}{d_U} - \frac{1}{2})}.
 \end{eqnarray*}
Gathering our results, we have shown that:

$$\left| B(t) - B(s) \right| \leq K \sqrt{|\log |t-s| |} |t-s|^{(\delta(t) - \varepsilon)(\frac{1}{d_U} - \varepsilon) }.$$
%thus  $\forall \varepsilon >0$ small enough,
%$$h_B(t) \geq (\delta(t) - \varepsilon)(\frac{1}{d_U} - \varepsilon),$$
In other words, $h_B(t) \geq \frac{\delta(t)}{d_U}$ for any open interval $U$ containing $t$. 
Letting the diameter of $U$ go to 0, one gets $h_B(t) \geq \frac{\delta(t)}{\alpha(t)}$. \Box

\bigskip

\noindent  {\bf Proof of Lemma \ref{lem6}:}

Lemma \ref{lem1} entails that $Y$ is almost surely a càdlàg process. Thus, for all $t \in (0,1)$, $\h \geq 0$. \Box

\bigskip

\noindent  {\bf Proof of Lemma \ref{lem7}:}

We seek to compute the Hausdorff dimension of $F_0 = \{ t \in [0,1] \backslash S : \delta(t) = 0 \} \cup S$. Let 
$E_{\gamma} = \limsup_{j \rightarrow +\infty}\limits \bigcup_{i=2^j}^{2^{j+1}-1}\limits \left[ V_i - i^{-\frac{1}{\gamma \alpha(V_i)}} , V_i + i^{-\frac{1}{\gamma \alpha(V_i)}}\right].$ 
Since $E_{\gamma} \subset \{ t : \h \leq \gamma \}$, $$\{ t \in [0,1] \backslash S : \delta(t) = 0 \} \subset \bigcap_{\gamma >0}\limits E_{\gamma}.$$

Now, $\left(\bigcup_{i\geq 1}\limits \left[ V_i - i^{-\frac{1}{\gamma d}} , V_i + i^{-\frac{1}{\gamma d}}\right]\right)_i$ 
is a covering of $E_{\gamma}$, and thus $\dim_H (E_{\gamma}) \leq \gamma d$.
As a consequence, $\dim_H (\{ t \in [0,1] \backslash S : \delta(t) = 0 \} ) =0$. Since $\dim_H (S) =0$, we find that
$f_H(0)=0$.

\bigskip

\noindent  {\bf Proof of Lemma \ref{lem8}:}

Following \cite{Bar}, set $\lambda_i = \frac{1}{1+\Gamma_i}$. For the system of points 
$\mathcal{P}= \{ (V_i, \lambda_i )\}_{i \geq 1}$ and $t \in [0,1]$, define the approximation rate of $t$ 
by $\mathcal{P}$ as

$$\delta_t(\mathcal{P}) = \sup \{ \delta \geq 1 : t \textrm{ belongs to an infinite number of balls } B(V_i, \lambda_i^{\delta}) \}.$$

Let us show that $\delta_t(\mathcal{P}) = \frac{1}{\delta(t)}$. 

In that view, note first that, since $\lim_{i \rightarrow +\infty}\limits \frac{\Gamma_i}{i} =1$ almost surely,

$$\delta(t) = \inf_{(V_{\phi(n)})_n \in \mathcal{R}_t}\limits \liminf_{i \infty}\limits -\frac{\log | 1+ \Gamma_{\phi(i)}|}{\log |V_{\phi(i)} -t|}.$$

Let $\delta \geq 1$ be such that $t$ belongs to an infinite number of balls $B(V_i, \lambda_i^{\delta})$. 
There exists $\phi : \bbbn \rightarrow \bbbn$ such that 
$(V_{\phi(n)})_n \in \mathcal{R}_t$ and $|t-V_{\phi(i)}| \leq \lambda_{\phi(i)}^{\delta}$, {\it i.e.} $-\frac{\log | 1+ \Gamma_{\phi(i)}|}{\log |V_{\phi(i)} -t|} \leq \frac{1}{\delta}$. As a consequence $\delta(t) \leq \frac{1}{\delta}$. 
Taking the supremum over all admissible $\delta$, one gets $\delta_t(\mathcal{P}) \leq \frac{1}{\delta(t)}$.

For the reverse inequality, consider two cases:
\begin{itemize}
 \item $\delta(t) = 1$: since $\delta_t(\mathcal{P}) \geq 1$, one gets $\delta_t(\mathcal{P}) \geq \frac{1}{\delta(t)}$;
 \item $\delta(t) < 1$: choose $\varepsilon > 0$ such that $\delta(t) + \varepsilon \leq 1$.
There exists $\phi : \bbbn \rightarrow \bbbn$ such that for all large enough $i \in \bbbn$, $-\frac{\log | 1+ \Gamma_{\phi(i)}|}{\log |V_{\phi(i)} -t|} \leq \delta(t) + \varepsilon$, {\it i.e.}
$| t- V_{\phi(i)}| \leq \frac{1}{( 1+ \Gamma_{\phi(i)})^{\frac{1}{\delta(t) + \varepsilon}}}$.
By definition, this entails $\delta_t(\mathcal{P}) \geq \frac{1}{\delta(t)+\varepsilon}$, and finally
$\delta_t(\mathcal{P}) \geq \frac{1}{\delta(t)}$ by letting $\varepsilon$ go to 0.

\end{itemize}

We now apply Theorem 21 of \cite{Bar}: since $hd \leq 1$, one has $h\alpha(t) \leq 1$ for all $t \in (0,1)$. 
The function $t \mapsto \frac{1}{h \alpha(t)}$ is continuous and thus 
$$\dim_H \left(\{ t \in (0,1) : \delta_t(\mathcal{P}\right) = \frac{1}{h \alpha(t)}\}) = \sup \{ h \alpha(t), t \in (0,1) \} = hd$$
i.e. $f_H(h)=hd$ \Box

\bigskip

\noindent  {\bf Proof of Lemma \ref{lem9}:}

Since $hc<1<hd$ and $\alpha$ is $C^1$, there exist $(t_0,t_1) \in (0,1)^2$ such that $h \alpha(t_0) =1$ and
$h\alpha(t) \leq 1$ for all $t \in I:=(t_1,t_0)$ or $(t_0,t_1)$.

Define $g : t \mapsto \min (1, \frac{1}{h \alpha(t)} ).$ Theorem 21 in \cite{Bar} yields: 
\begin{eqnarray*}
 \dim_H (\{ t \in I : \delta_t(\mathcal{P}) = g(t) \}) & = & \sup \{ \frac{1}{g(t)}, t \in I \} \\
 & = & \sup \{ h \alpha(t), t \in I \} \\
 & = & h \alpha(t_0)\\
 & = & 1.
\end{eqnarray*}
One gets that $f_H(h) \geq \dim_H (\{ t \in I : \delta(t) = h \alpha(t) \}) =1$, {\it i.e.} $f_H(h)=1$. \Box

\bigskip

\noindent  {\bf Proof of Lemma \ref{lem10}:}

By definition and Lemma \ref{lem2}, 
$$F_{1/c} = \{ t \in [0,1] : \delta(t) = \frac{\alpha(t)}{c} \} = \{ t \in [0,1] : \alpha(t) = c \} \cap \{ t \in [0,1] : \delta(t) = 1 \}.$$

\noindent Set $E = \{ t \in [0,1] : \alpha(t) = c \}$, $E_0 =\{ t \in [0,1] : \delta(t) < 1 \}$ and $ E_1 =  \{ t \in [0,1] : \delta(t) = 1 \}$.
Then $[0,1] = E_0 \cup E_1$, $F_{1/c} = E \cap E_1$ and thus $f_H(\frac{1}{c}) \leq \dim_H(E)$.

\noindent Now, if $\dim_H(E) = 0$, the lemma holds true since $F_{1/c}$ is not empty and thus $f_H(1/c) \geq 0$.

\noindent Suppose then that $\dim_H(E) >0$. Choose $s < \dim_H(E)$. This implies that
$\mathcal{H}^s(E) = +\infty$, and Theorem 4.10 in \cite{Falc} entails that there exist a compact set $E_c \subset E$ 
such that $0 < \mathcal{H}^s(E_c) < +\infty$.

Set $\mu_s(.) = \mathcal{H}^s(E_c \cap .)$. This is a finite and positive Borel measure on $[0,1]$. 
Theorem 3.7 in \cite{LL2}, along with Lemmas \ref{lem3} and \ref{lem5}, entail that 
for all $t \in (0,1)$, $\P (t \in E_1) =1$ and thus $\P ( t \in E_c \cap E_1) = \mathbf{1}_{t \in E_c}$.

One computes:
\begin{eqnarray*}
 \E \left[ \mu_s( E_0 ) \right] & = & \E \left[ \int_0^1 \mathbf{1}_{t \in E_0} \mu_s(dt) \right]\\
 & = & \int_0^1 \E \left[ \mathbf{1}_{t \in E_0}\right] \mu_s(dt)\\
 & = & \int_{E_c} \P(t \in E_0 ) \mu_s(dt)\\
 & = & 0.
\end{eqnarray*}
Thus, $\mu_s( E_0 )$ is a positive random variable with vanishing expectation: almost surely, $\mu_s( E_0 )=0$. 
Since $\mu_s(E_c) = \mu_s(E_c \cap E_0 ) + \mu_s( E_c \cap E_1),$ one obtains that, almost surely, 
$\mu_s(E_c) =  \mu_s( E_c \cap E_1)$.

Now, $\mathcal{H}^s( E \cap E_1) \geq \mathcal{H}^s( E_c \cap E_1) = \mathcal{H}^s( E_c ) >0$. 
Thus $\dim_H(E \cap E_1) \geq s$, $\forall s \leq \dim_H(E)$ and $\dim_H(E \cap E_1) \geq  \dim_H(E)$. \Box

\bigskip

\noindent  {\bf Proof of Lemma \ref{lem11}:}

For all $t \in [0,1]$, $\alpha(t) \geq c$ and almost surely, for all $t \in [0,1]$, $\delta(t) \leq 1$, thus, almost surely,
for all $t \in [0,1]$, $\h \leq \frac{1}{c}$. As a consequence, for $h > \frac{1}{c}$, $F_h = \varnothing$ and $f_H(h) = -\infty$. \Box

\section{Large deviation and Legendre multifractal spectra}
We compute in this section the large deviation and Legendre multifractal spectra of the process $B$ on an interval. 
Recall that we consider the process on $[0,1]$, and that
the large deviation multifractal spectrum of a process $X$ on $[0,1]$ is the (random) function $f_g$ defined on $\bbbr$ by
$$f_g(\beta) = \lim_{\varepsilon \rightarrow 0^+}\limits \liminf_{n \rightarrow +\infty}\limits \frac{\log \Nb}{\log n},$$
where, for a positive integer $n$ and $\varepsilon>0$, 
$$\Nb = \# \{ j \in \{0, \ldots, n-1 \}: \beta - \varepsilon \leq \frac{\log |X(\frac{j+1}{n}) - X(\frac{j}{n}) |}{- \log n} \leq \beta + \varepsilon \}.$$
Other large deviation multifractal spectra can be defined by replacing the increments $X(\frac{j+1}{n}) - X(\frac{j}{n})$
by other measures of the variation of $X$, such as its oscillations, but we will not consider these in this work.
 We do not recall the definition of the Legendre multifractal spectrum, and refer the reader to \cite{Falc,LVV}
instead.

We shall denote 
\begin{eqnarray*}
P_n^j & = & \P \left(\beta - \varepsilon \leq \frac{\log |Y(\frac{j+1}{n}) - Y(\frac{j}{n}) |}{- \log n} \leq \beta + \varepsilon \right) \\
& = & \P \left( \frac{1}{n^{\beta + \varepsilon}} \leq | Y(\frac{j+1}{n}) - Y(\frac{j}{n}) | \leq \frac{1}{n^{\beta - \varepsilon}} \right).
\end{eqnarray*}
 
Set also

$$X_j = \mathbf{1}_{\{ \frac{1}{n^{\beta + \varepsilon}} \leq | B(\frac{j+1}{n}) - B(\frac{j}{n}) | \leq \frac{1}{n^{\beta - \varepsilon}} \} }$$ 
which follows a Bernoulli law with parameter $P_n^j$. Clearly,  
$$\Nb = \sum_{j=1}^{n}\limits X_j.$$
For $U$ an open interval of $(0,1)$, we write
$$J_n(U) = \{ j : \frac{j}{n} \in U \}.$$
There exists a constant $K_U >0$ such that, for $n$ large enough, 
$$\#J_n(U) \geq K_U n.$$

Finally, we will make use of the characteristic function of $B$, which reads \cite{LGLVL}:
%\begin{equation}\label{FBCF}
%\E \left( \exp \left(i \sum_{j=1}^{m}\limits \theta_j L_{F}(t_j) \right) \right)= \exp \left( -2 \int_{[0,T]} \int_{0}^{+ \infty} \sin^2\left( \sum_{j=1}^{m} \theta_j \frac{C_{\alpha(t_j)}^{1/\alpha(t_j)}}{2y^{1/\alpha(t_j)}} 1_{[0,t_j]}(x) \right)\hspace{0.1cm} dy \hspace{0.1cm} dx \right)
%\end{equation}
%where $ m \in \mathbb{N}, (\theta_1, \ldots, \theta_m) \in \bbbr^m, (t_1, \ldots , t_m) \in \bbbr^m$.
%
%\bigskip
%In \cite{KL}, another path is followed to define a multistable extension of L\'evy motion.
%Considering the characteristic function of $\alpha-$stable L\'evy motion $L$:
%\begin{equation*}
%\mathbb{E}\left(\exp(i\theta L(t))\right)= \exp \left(-t |\theta|^{\alpha} \right),
%\end{equation*}
%one defines the process $L_{I}$ by its joint characteristic function as follows:
\begin{equation}\label{IICF}
\mathbb{E}\left(\exp\left(i\sum_{j=1}^m\theta_j B(t_j)\right)\right)=\exp\left(-\int\left|\sum_{j=1}^m \theta_j1_{[0,t_j]}(s)\right|^{\alpha(s)} ds\right).
\end{equation}
where $ m \in \mathbb{N}, (\theta_1, \ldots, \theta_m) \in \bbbr^m, (t_1, \ldots , t_m) \in \bbbr^m$.

\subsection{Main result}
The large deviation and Legendre multifractal spectra of $B$ are described by the following theorem:
\begin{theo}\label{spectreLD}
With probability one, the large deviation and Legendre multifractal spectra of $B$ satisfy:
\begin{equation}\label{FGFL}
f_g(\beta)=f_l(\beta)=
\begin{cases}
-\infty &\text{for $\beta<0$};\\
\beta d &\text{for $\beta \in [0,\frac{1}{d}]$};\\
1 &\text{for $\beta \in (\frac{1}{d},\frac{1}{c}]$};\\
1 + \frac{1}{c} - \beta &\text{for $\beta \in (\frac{1}{c},1+\frac{1}{c}]$};\\
-\infty & \text{for $\beta>\frac{1}{c}$}.
\end{cases}
\end{equation} 
\end{theo}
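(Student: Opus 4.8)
The plan is to reduce both spectra to sharp estimates, uniform in $n$ and $j$, of the probabilities $P_n^j$ introduced above, and then to transfer these to the random counts $\Nb$ using that $B$ has independent increments, so that the Bernoulli variables $X_j$ are mutually independent. By \eqref{IICF} with $m=2$, $\theta_1=-\theta$, $\theta_2=\theta$, $t_1=j/n$, $t_2=(j+1)/n$, the increment $\Delta_j:=B(\tfrac{j+1}{n})-B(\tfrac{j}{n})$ is symmetric infinitely divisible with no Gaussian part and L\'evy measure $\nu_j(dx)=\big(\int_{j/n}^{(j+1)/n}\kappa(\alpha(s))\,|x|^{-1-\alpha(s)}\,ds\big)\,dx$ for the usual normalising function $\kappa$. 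Since $\alpha$ is $\mathcal C^1$, $\alpha(s)=\alpha(j/n)+O(1/n)$ on $[\tfrac jn,\tfrac{j+1}n]$, so $\Delta_j$ should be comparable to $n^{-1/\alpha(j/n)}$ times a symmetric $\alpha(j/n)$-stable variable, with errors negligible at the logarithmic scale. Making this precise, I would prove, for thresholds $x=n^{-\beta\pm\varepsilon}$ and uniformly in $j$: a single--big--jump estimate $\P(|\Delta_j|\ge x)\asymp n^{-1}x^{-\alpha(j/n)}$ when $x\gg n^{-1/\alpha(j/n)}$; a Fourier inversion bound showing that $\Delta_j$ has a density of order $n^{1/\alpha(j/n)}$ near $0$, hence $\P(|\Delta_j|\le x)\asymp n^{1/\alpha(j/n)}x$ when $x\ll n^{-1/\alpha(j/n)}$; and $P_n^j$ bounded below by a positive constant in the intermediate range $\tfrac1{\alpha(j/n)}\in[\beta-\varepsilon,\beta+\varepsilon]$. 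Equivalently $\frac{\log P_n^j}{-\log n}=g_n(j)+O(\varepsilon)+o(1)$ uniformly, where $g_n(j)=1-\beta\alpha(j/n)$ if $\beta\alpha(j/n)<1$, $g_n(j)=\beta-\tfrac1{\alpha(j/n)}$ if $\beta\alpha(j/n)>1$, and $g_n(j)=0$ in the intermediate range.

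For the upper bound on $f_g$, independence gives $\E[\Nb]=\sum_j P_n^j\le n\max_j P_n^j$. As $j$ varies, $\alpha(j/n)$ ranges over a dense subset of $[c,d]$, and the estimates above show that $1+\sup_j\big(-g_n(j)\big)$ equals the maximum of $\sup\{\beta\alpha(t):t\in[0,1],\ \beta\alpha(t)<1\}$, of $1$ (present when $\beta\alpha(t)$ is close to $1$ for some $t$), and of $\sup\{1+\tfrac1{\alpha(t)}-\beta:t\in[0,1],\ \beta\alpha(t)>1\}$; evaluating these on $[c,d]$ reproduces exactly the five cases of \eqref{FGFL} (and gives $\E[\Nb]\to0$ when $\beta<0$ or $\beta>1+\tfrac1c$). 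Writing $F$ for the right-hand side of \eqref{FGFL}, a Markov inequality along $n_k=2^k$ and Borel--Cantelli yield $\liminf_n\frac{\log\Nb}{\log n}\le F(\beta)$ almost surely; intersecting over $\varepsilon\in\{1/m:m\ge1\}$ and using monotonicity of $\varepsilon\mapsto\liminf_n\frac{\log\Nb}{\log n}$ gives $f_g(\beta)\le F(\beta)$ a.s.

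For the lower bound, fix $\beta\in(0,1+\tfrac1c)$, so $F(\beta)>0$. Choosing a value $\alpha^\ast$ of $\alpha$ that realises the relevant supremum above and a small $\varepsilon'>0$, continuity of $\alpha$ gives $\#\{j:|\alpha(j/n)-\alpha^\ast|<\varepsilon'\}\ge c'(\varepsilon')\,n$, and for such $j$ the lower bounds of the first step give $P_n^j\ge n^{-(1-F(\beta))+O(\varepsilon+\varepsilon')+o(1)}$, so $\E[\Nb]\ge n^{F(\beta)+O(\varepsilon+\varepsilon')+o(1)}$, tending to infinity polynomially. Independence of the $X_j$ then provides a Chernoff bound $\P\big(\Nb\le\tfrac12\E[\Nb]\big)\le e^{-\E[\Nb]/8}$, summable over \emph{all} $n$, whence $\Nb\ge\tfrac12\E[\Nb]$ eventually a.s. and $\liminf_n\frac{\log\Nb}{\log n}\ge F(\beta)$; the endpoints $\beta\in\{0,1+\tfrac1c\}$ (where the same computation gives $f_g=0$) and the values $\beta\notin[0,1+\tfrac1c]$ (where $\E[\Nb]\to0$, so $\Nb=0$ eventually along $n_k=2^k$ and $f_g=-\infty$) are treated separately. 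This gives $f_g=F$ a.s. Finally, $F$ is concave (piecewise linear with slopes $d>0>-1$ and effective domain $[0,1+\tfrac1c]$); by the general relation between the large deviation and Legendre multifractal spectra (\cite{LVV}), under which $f_l$ is the concave hull of $f_g$, we conclude $f_l=f_g=F$.

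The main difficulty is the first step: the tail and small-ball estimates for $\Delta_j$ must hold \emph{uniformly} in $n$ and $j$, with all error terms remaining $o(1)$ after division by $\log n$. This requires exploiting quantitatively that the $\mathcal C^1$ function $\alpha$ is nearly constant on each interval of length $1/n$, so that $\int_{j/n}^{(j+1)/n}|\theta|^{\alpha(s)}\,ds=\tfrac1n|\theta|^{\alpha(j/n)}\big(1+O(\tfrac{\log n}{n})\big)$ for $\theta$ in the ranges that govern the upper tail (a single big jump) and the density near $0$ (Fourier inversion of $\exp(-\int_{j/n}^{(j+1)/n}|\theta|^{\alpha(s)}\,ds)$), and controlling the comparison with the $\alpha(j/n)$-stable law; the matching lower tail and small-ball bounds needed for the lower bound on $f_g$ are the most delicate part.
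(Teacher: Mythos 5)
Your overall architecture is the same as the paper's: two-sided estimates of $P_n^j$, of order $n^{\alpha(t_j)\beta-1}$ in the big-jump regime and $n^{\frac{1}{\alpha(t_j)}-\beta}$ in the density regime, transferred to $\Nb$ through exponential (Chernoff) bounds for sums of independent Bernoulli variables and Borel--Cantelli, with $f_l=f_g$ obtained from concavity of the computed $f_g$; your dyadic-subsequence Markov argument for the upper bound and the treatment of $\beta<0$, $\beta>1+\frac1c$ via $\E[\Nb]\to0$ are legitimate variants of Lemmas \ref{LemPR7} and \ref{LemEncAcc1}. The first genuine gap is that the step you explicitly defer is the technical core of the theorem, not a routine verification: the upper bounds on $P_n^j$ do follow from a truncation inequality as you suggest, but the matching \emph{lower} bounds, uniform in $j$ and $n$ (Lemmas \ref{LemE1min} and \ref{LemE2min}), are obtained in the paper by a Parseval argument against a compactly supported $\mathcal{C}^4$ test function $\varphi$, a careful splitting $I_n^j=I_{n,1}^j+I_{n,2}^j+I_{n,3}^j$, and the nontrivial positivity fact $\min_{\delta\in[c,d]}\bigl(-\int_0^\infty \eta^\delta\cos(\eta)\hat\varphi(\eta)\,d\eta\bigr)>0$ proved in the appendix via the L\'evy--Khintchine representation. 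Your proposal gives no mechanism for a lower bound on the annulus probability that survives uniformly (your ``comparable to an $\alpha(j/n)$-stable variable with negligible error'' would itself have to be quantified in a form strong enough to bound probabilities of shrinking annuli from below), so everything downstream, including your treatment of the endpoints $\beta\in\{0,\frac1d,\frac1c,1+\frac1c\}$ by ``the same computation,'' currently rests on unproved statements. (The paper avoids some of these endpoint computations altogether by using upper semicontinuity of $f_g$, Lemma \ref{inversionlimsup}.)

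The second gap is logical. Your argument produces, for each fixed $\beta$ (and then each rational $\varepsilon$), an almost sure statement, and you conclude ``this gives $f_g=F$ a.s.''; but the theorem asserts one null set outside of which \eqref{FGFL} holds for \emph{all} $\beta$ simultaneously, and $\beta$ ranges over an uncountable set. This interchange of ``for all $\beta$'' and ``almost surely'' does not follow from the per-$\beta$ statements, and the paper devotes two lemmas to it: Lemma \ref{inversionencadrement}, which uses the representation of $f_g(\beta)$ as a countable infimum over rational pairs $\beta_1<\beta<\beta_2$ of $\liminf_n \frac{\log M_n(\beta_1,\beta_2)}{\log n}$ to upgrade $\beta$-wise two-sided bounds (with $\varepsilon$-corrections vanishing as $\varepsilon\to0$) to a single almost sure statement on an interval, and Lemma \ref{inversionlimsup} (upper semicontinuity of $f_g$) for the endpoint values. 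Your two-sided bounds are exactly of the form Lemma \ref{inversionencadrement} requires, so this gap is fixable, but as written the final ``almost surely, for all $\beta$'' conclusion is unjustified.
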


\noindent The fact that $f_l=f_g$ stems from the general result that $f_l$ is always the concave hull of $f_g$ when the
set $\{\beta: f_g(\beta) \geq 0 \}$ is bounded.
The part concerning $f_g$ in Theorem \ref{spectreLD} follows from a series of lemmas that are proven in the next sections.

We note in passing that, comparing with Theorem \ref{spectre}, we see that the weak multifractal formalism
holds for $B$, but the strong one does not, that is, $f_H \leq f_g$ and $f_H \neq f_g$. The decreasing part
with slope -1 for ``large'' exponents present in $f_g$ but not in $f_H$ 
is a common phenomenon when variations are measured with increments.

In order to prove Theorem \ref{spectreLD}, we will first show in each case of \eqref{FGFL} that the equality holds true for any 
given $\beta$ with probability one. Permuting ``for all $\beta$'' and ``almost surely'' will then often 
be achieved thanks to the two following general simple but useful lemmas on the large deviation spectrum, 
which are of independent interest.

\begin{lem}\label{inversionlimsup}
 The large deviation spectrum of any real function is an upper semicontinuous function. 

\end{lem}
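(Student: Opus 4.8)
The plan is to show directly that $\beta \mapsto f_g(\beta)$ is upper semicontinuous, i.e. that for every $\beta_0 \in \bbbr$ and every $\lambda > f_g(\beta_0)$ there is a neighbourhood of $\beta_0$ on which $f_g < \lambda$. First I would unwind the definition: writing $\rho_n^\varepsilon(\beta) = \frac{\log N_n^\varepsilon(\beta)}{\log n}$, we have $f_g(\beta) = \lim_{\varepsilon \to 0^+} \liminf_{n \to \infty} \rho_n^\varepsilon(\beta)$, and the limit in $\varepsilon$ exists because $\liminf_n \rho_n^\varepsilon(\beta)$ is nondecreasing in $\varepsilon$ (the counting set $N_n^\varepsilon(\beta)$ grows with $\varepsilon$), so $f_g(\beta) = \sup_{\varepsilon > 0} \liminf_{n \to \infty} \rho_n^\varepsilon(\beta)$.

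The key observation is a simple covering/monotonicity fact: if $|\beta' - \beta_0| \leq \varepsilon/2$, then any index $j$ counted in $N_n^{\varepsilon/2}(\beta')$ — meaning $\beta' - \varepsilon/2 \leq \frac{\log|X((j+1)/n) - X(j/n)|}{-\log n} \leq \beta' + \varepsilon/2$ — automatically satisfies $\beta_0 - \varepsilon \leq \frac{\log|X((j+1)/n) - X(j/n)|}{-\log n} \leq \beta_0 + \varepsilon$, hence is counted in $N_n^{\varepsilon}(\beta_0)$. Therefore $N_n^{\varepsilon/2}(\beta') \leq N_n^{\varepsilon}(\beta_0)$ for all $n$, which gives $\liminf_{n} \rho_n^{\varepsilon/2}(\beta') \leq \liminf_{n} \rho_n^{\varepsilon}(\beta_0)$, and since $f_g(\beta') = \sup_{\eta > 0}\liminf_n \rho_n^\eta(\beta') \geq \liminf_n \rho_n^{\varepsilon/2}(\beta')$ is not what we want — rather, we use it the other way: for any fixed $\varepsilon$, $\sup_{|\beta'-\beta_0|\leq \varepsilon/2}\, \liminf_n \rho_n^{\varepsilon/2}(\beta') \leq \liminf_n \rho_n^{\varepsilon}(\beta_0)$.

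From here the argument closes quickly. Fix $\lambda > f_g(\beta_0)$. Since $f_g(\beta_0) = \sup_{\varepsilon>0}\liminf_n \rho_n^\varepsilon(\beta_0)$, we have $\liminf_n \rho_n^{\varepsilon}(\beta_0) \leq f_g(\beta_0) < \lambda$ for \emph{every} $\varepsilon > 0$; pick one such $\varepsilon$ and set the neighbourhood $U_0 = (\beta_0 - \varepsilon/2, \beta_0 + \varepsilon/2)$. For $\beta' \in U_0$ the monotonicity bound above yields, for that same $\varepsilon$, $\liminf_n \rho_n^{\varepsilon/2}(\beta') \leq \liminf_n \rho_n^{\varepsilon}(\beta_0) < \lambda$; but this only controls the $\varepsilon/2$-term, not the supremum defining $f_g(\beta')$, so one more step is needed: apply the same inequality with $\varepsilon$ replaced by an arbitrary $\eta < \varepsilon$ and $\beta_0$ playing no special role — actually the clean route is to shrink $U_0$ to $(\beta_0 - \eta/2, \beta_0 + \eta/2)$ is not uniform in $\eta$. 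The correct final step is: for $\beta' \in U_0$ and any $\eta \leq \varepsilon$, $\liminf_n \rho_n^{\eta}(\beta') \leq \liminf_n \rho_n^{\eta + 2|\beta'-\beta_0|}(\beta_0) \leq \liminf_n \rho_n^{2\varepsilon}(\beta_0) < \lambda$ provided we chose $\varepsilon$ so that even $\liminf_n \rho_n^{2\varepsilon}(\beta_0) < \lambda$ (possible since this holds for all radii), and for $\eta > \varepsilon$ we bound $\liminf_n \rho_n^\eta(\beta') \leq \liminf_n \rho_n^{\eta + 2\varepsilon}(\beta_0) < \lambda$ as well; taking the supremum over $\eta$ gives $f_g(\beta') \leq \lambda$ for all $\beta' \in U_0$.

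\medskip
\noindent The main obstacle is purely bookkeeping: keeping straight that $f_g$ is a \emph{supremum} (not an infimum) over $\varepsilon$ of $\liminf_n \rho_n^\varepsilon$, so that the ``easy'' direction of the $\varepsilon$-monotonicity is exactly the one needed, and arranging the neighbourhood radius so that the $2\varepsilon$-shifted quantity at $\beta_0$ is still below $\lambda$ uniformly. No probabilistic input is required — the statement is deterministic, valid for every real function $X$ (indeed every real sequence of increments), which is why it can later be combined with ``for each fixed $\beta$, a.s.\ equality holds'' to get ``a.s., for all $\beta$''. $\Box$
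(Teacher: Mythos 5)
Your covering inequality ($N_n^{\varepsilon/2}(\beta')\leq N_n^{\varepsilon}(\beta_0)$ whenever $|\beta'-\beta_0|\leq \varepsilon/2$) is correct and is exactly the right combinatorial ingredient, but your identification of $f_g$ is backwards, and the proof as written relies on it. Since $N_n^{\varepsilon}(\beta)$ grows with $\varepsilon$, the map $\varepsilon\mapsto \liminf_n \frac{\log N_n^{\varepsilon}(\beta)}{\log n}$ is nondecreasing, so its limit as $\varepsilon\to 0^+$ is its \emph{infimum} over $\varepsilon>0$, not its supremum. This sign error propagates through the last part of your argument: the claim that $\liminf_n \frac{\log N_n^{\varepsilon}(\beta_0)}{\log n}<\lambda$ holds ``for every $\varepsilon>0$'' (``for all radii'', hence also at radius $2\varepsilon$) is unjustified --- only sufficiently small radii are controlled by $\lambda>f_g(\beta_0)$ --- and the step treating $\eta>\varepsilon$, which needs $\liminf_n \frac{\log N_n^{\eta+2\varepsilon}(\beta_0)}{\log n}<\lambda$ for arbitrarily large windows, is false in general (for a large window essentially all $n$ increments are counted and this quantity is close to $1$). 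Finally, ``taking the supremum over $\eta$'' is not what computes $f_g(\beta')$; the supremum over all radii of the counting exponent is typically $1$ regardless of $\beta'$.

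The fix is short, and it is essentially the paper's proof: with the correct (infimum) reading, $f_g(\beta')\leq \liminf_n \frac{\log N_n^{\eta}(\beta')}{\log n}$ for \emph{every fixed} $\eta>0$, so one small radius suffices. Given $\lambda>f_g(\beta_0)$, monotonicity in $\varepsilon$ gives some $\varepsilon_0>0$ with $\liminf_n \frac{\log N_n^{\varepsilon_0}(\beta_0)}{\log n}<\lambda$; for $|\beta'-\beta_0|<\varepsilon_0/2$ your inequality gives $N_n^{\varepsilon_0/2}(\beta')\leq N_n^{\varepsilon_0}(\beta_0)$, hence $f_g(\beta')\leq \liminf_n \frac{\log N_n^{\varepsilon_0/2}(\beta')}{\log n}\leq \liminf_n \frac{\log N_n^{\varepsilon_0}(\beta_0)}{\log n}<\lambda$, which is upper semicontinuity. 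The paper phrases the same argument sequentially, with $x_j\to\beta$, radii $\varepsilon_j=\sup_{k\geq j}|\beta-x_k|$, and the chain $N_n^{2\varepsilon_j}(\beta)\geq N_n^{\varepsilon_j}(x_j)\geq N_n^{\varepsilon_l}(x_j)$, letting $l$ then $j$ tend to infinity. So what is missing in your write-up is not the combinatorics but which side of the monotone limit $f_g$ sits on; as it stands, the concluding steps do not go through.
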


 \begin{proof}
 Let $f_g$ be the large deviation spectrum of a real function. 
Consider $\beta \in \bbbr$, $(x_j)_{j \geq 1}$ a sequence such that $\lim_{j \rightarrow +\infty}\limits x_j = \beta$, and 
set $\varepsilon_j = \sup_{k \geq j}\limits |\beta-x_k|.$

 Note first that
 
 $$\lim_{\varepsilon \rightarrow 0}\limits \liminf_{n \rightarrow +\infty}\limits \frac{\log N^{\varepsilon}_n(\beta)}{\log n} = \lim_{j \rightarrow +\infty}\limits \liminf_{n \rightarrow +\infty}\limits \frac{\log N^{\varepsilon_j}_n(\beta)}{\log n} .$$

For all $j\geq 1$ and all $l \geq j$,
$N^{2\varepsilon_j}_n(\beta) \geq N^{\varepsilon_j}_n(x_j) \geq N^{\varepsilon_l}_n(x_j)$. As a consequence,

$$\liminf_{n \rightarrow +\infty}\limits \frac{\log N^{2\varepsilon_j}_n(\beta)}{\log n} \geq \liminf_{n \rightarrow +\infty}\limits \frac{\log N^{\varepsilon_l}_n(x_j)}{\log n}.$$
Letting $l$ tend to infinity, one gets $\liminf_{n \rightarrow +\infty}\limits \frac{\log N^{2\varepsilon_j}_n(\beta)}{\log n} \geq f_g(x_j)$ and letting $j$ tend to infinity one finally obtains 
$$f_g(\beta) \geq \limsup_{j \rightarrow +\infty}\limits f_g(x_j).$$

\end{proof}

\begin{lem}\label{inversionencadrement}

Assume that there exist four functions $\underline{h}, \overline{h},\underline{g}, \overline{g}$ 
with $\lim_{u \to 0} \limits \ \underline{g}(u)=\lim_{u \to 0} \limits \ \overline{g}(u)=0$
such that, for all $\beta$ in some interval $I$ and all sufficiently small $\varepsilon>0$, almost surely 
$$\underline{h}(\beta) + \underline{g}(\varepsilon) \leq \liminf_{n \rightarrow +\infty}\limits \frac{\log \Nb}{\log n} 
\leq \overline{h}(\beta) + \overline{g}(\varepsilon).$$
Then, almost surely, for all $\beta$ in $I$, $\underline{h}(\beta) \leq f_g(\beta) \leq \overline{h}(\beta)$.

\end{lem}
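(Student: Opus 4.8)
The plan is to exploit the obvious monotonicity of $\Nb$ in $\varepsilon$ and to convert the family of ``almost sure for fixed $\beta$'' statements into a single almost sure statement valid simultaneously for all $\beta$ in $I$, by first restricting attention to a countable dense set of values of $\beta$ (and of $\varepsilon$) and then filling the gaps by continuity-type arguments. Concretely, I would fix a countable dense set $D \subset I$ and a sequence $\varepsilon_k \downarrow 0$; for each $\beta \in D$ and each $k$ the hypothesis gives an almost sure event on which the two-sided bound with $\varepsilon=\varepsilon_k$ holds, and the countable intersection $\Omega_0$ of all these events still has probability one. On $\Omega_0$, for every $\beta \in D$ we may let $k\to\infty$ in the hypothesis: since $\underline{g}(\varepsilon_k),\overline{g}(\varepsilon_k)\to 0$ and $\liminf_n \frac{\log\Nb}{\log n}$ is monotone in $\varepsilon$, the limit $\lim_{\varepsilon\to0^+}\liminf_n\frac{\log\Nb}{\log n}=f_g(\beta)$ exists and satisfies $\underline h(\beta)\le f_g(\beta)\le\overline h(\beta)$.

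Next I would remove the restriction $\beta\in D$. For the \emph{upper} bound this is immediate from Lemma~\ref{inversionlimsup}: $f_g$ is upper semicontinuous, so for arbitrary $\beta\in I$ and $\beta_j\to\beta$ with $\beta_j\in D$ one gets $f_g(\beta)\ge\limsup_j f_g(\beta_j)$ — wait, that is the wrong direction; for the upper bound I instead use the elementary inclusion $N^{\varepsilon}_n(\beta)\subseteq N^{\varepsilon+|\beta-\beta'|}_n(\beta')$, valid for any $\beta'$. Choosing $\beta'\in D$ with $|\beta-\beta'|$ small and combining with the fixed-$\beta'$ bound $f_g(\beta')\le\overline h(\beta')$ (plus whatever modulus-of-continuity or monotonicity information $\overline h$ carries — in the applications $\overline h$ is continuous or piecewise affine on $I$) yields $f_g(\beta)\le\overline h(\beta)$ after letting $\beta'\to\beta$. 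For the \emph{lower} bound the same inclusion run the other way, $N^{\varepsilon}_n(\beta')\subseteq N^{\varepsilon+|\beta-\beta'|}_n(\beta)$, gives $\liminf_n\frac{\log N^{2\varepsilon}_n(\beta)}{\log n}\ge\liminf_n\frac{\log N^{\varepsilon}_n(\beta')}{\log n}\ge\underline h(\beta')+\underline g(\varepsilon)$ on $\Omega_0$, and letting $\beta'\to\beta$ through $D$ and then $\varepsilon\to0$ gives $f_g(\beta)\ge\underline h(\beta)$, again using continuity of $\underline h$.

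The main obstacle — really the only subtle point — is that ``almost surely for each $\beta$'' does not by itself give ``almost surely for all $\beta$'', because $I$ is uncountable; the whole content of the lemma is that the function $\beta\mapsto\liminf_n\frac{\log\Nb}{\log n}$ is rigid enough (monotone in $\varepsilon$, and sandwiched between the inclusions above as $\beta$ varies) that control on a countable dense set propagates everywhere. One must be slightly careful that the bounds $\underline h,\overline h$ behave well under the perturbation $\beta\mapsto\beta\pm|\beta-\beta'|$; in the intended applications these functions are continuous (indeed affine) on $I$, so this causes no trouble, but the cleanest way to state the argument is to note that it suffices to prove the conclusion on each compact subinterval, where $\underline h$ and $\overline h$ are uniformly continuous. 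Assembling: the probability-one event is $\Omega_0$ above, and on it the two displayed chains of inequalities give $\underline h(\beta)\le f_g(\beta)\le\overline h(\beta)$ for every $\beta\in I$. $\Box$
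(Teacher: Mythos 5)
Your argument is correct in substance and rests on the same two pillars as the paper's proof, namely reduction to a countable family of windows plus monotonicity of $\varepsilon \mapsto N_n^{\varepsilon}(\beta)$, but the implementation differs. The paper does not discretize $\beta$: it introduces the asymmetric counts $M_n(\beta_1,\beta_2)$ with rational endpoints, observes that the pathwise identity $f_g(\beta)=\inf\{\liminf_{n}\frac{\log M_n(\beta_1,\beta_2)}{\log n} : (\beta_1,\beta_2)\in\mathbb{Q}^2,\ \beta_1<\beta<\beta_2\}$ holds for every $\beta$ simultaneously, and then applies the hypothesis to this single countable family (each $M_n(\beta_1,\beta_2)$ being an $N_n^{\varepsilon'}(\beta')$ with rational center $\beta'=(\beta_1+\beta_2)/2$ and width $\varepsilon'=(\beta_2-\beta_1)/2$); you instead fix a countable dense set of centers and a null sequence of widths and transport the bounds to an arbitrary $\beta$ through the inclusion $N_n^{\varepsilon}(\beta)\subseteq N_n^{\varepsilon+|\beta-\beta'|}(\beta')$. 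Two remarks. First, in your upper-bound step the object to combine with the inclusion is not the limiting bound $f_g(\beta')\le\overline{h}(\beta')$: the enlarged window cannot be shrunk back, so the inclusion only bounds $\liminf_{n}\frac{\log N_n^{\varepsilon}(\beta)}{\log n}$ by $\liminf_{n}\frac{\log N_n^{\varepsilon_k}(\beta')}{\log n}$ for some $\varepsilon_k\ge\varepsilon+|\beta-\beta'|$, a quantity which dominates $f_g(\beta')$; what you must use is the finite-$\varepsilon$ bound $\liminf_{n}\frac{\log N_n^{\varepsilon_k}(\beta')}{\log n}\le\overline{h}(\beta')+\overline{g}(\varepsilon_k)$, which is available on your event $\Omega_0$, and this is exactly the pattern you already follow for the lower bound, so the repair is immediate. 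Second, both your extension step and the paper's (for $\beta$ outside the countable set of centers) quietly require $\underline{h}$ and $\overline{h}$ to be continuous, or at least suitably semicontinuous, at $\beta$, a hypothesis absent from the statement of the lemma; you flag this explicitly while the paper is silent about it, and it is harmless since in every application these functions are affine or piecewise affine on $I$.
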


\begin{proof}

Define
$$M_n(\beta_1,\beta_2) = \# \{ j \in \{0, \ldots, n-1 \}: \beta_1 \leq \frac{\log |X(\frac{j+1}{n}) - X(\frac{j}{n}) |}{- \log n} \leq \beta_2\}.$$
Then, for $\beta_1< \beta < \beta_2$,
$$N_n^{(\beta_2-\beta) \vee (\beta-\beta_1)}(\beta) \leq M_n(\beta_1,\beta_2) \leq N_n^{(\beta_2-\beta) \wedge (\beta-\beta_1)}(\beta)$$
and
$$f_g(\beta) = \inf_{(\beta_1, \beta_2) \in \mathbb{Q}^2, \beta_1<\beta<\beta_2} \liminf_{n \rightarrow +\infty}\limits 
\frac{\log M_n(\beta_1,\beta_2)}{\log n}.$$

The set
$$A = \bigcup_{(\beta_1,\beta_2) \in \mathbb{Q}^2} M(\beta_1,\beta_2)$$
is countable, and, thus $f_g$ is obtained by a countable infimum for all $\beta \in I$. This yields the result.

\end{proof} 

\subsection{Preliminary lemmas}

\subsubsection{Statements}

Define $H(t) = H_{\lambda,p}(t)=\lambda t - \log (1 - p + p e^t ).$

\begin{lem}\label{LemPR1}
 If $0<p<\lambda < 1$, then
 
$$\sup_{t > 0}\limits H(t) = \lambda \log (\frac{\lambda}{p}) + (1-\lambda) \log(\frac{1-\lambda}{1-p} ).$$ 
\end{lem}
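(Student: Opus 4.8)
The plan is to maximize the smooth function $H(t) = \lambda t - \log(1 - p + pe^t)$ over $t > 0$ by elementary calculus. First I would compute the derivative: $H'(t) = \lambda - \frac{pe^t}{1 - p + pe^t}$. Setting $H'(t) = 0$ amounts to solving $\lambda(1 - p + pe^t) = pe^t$, i.e. $\lambda(1-p) = pe^t(1 - \lambda)$, which gives the unique critical point $e^{t_*} = \frac{\lambda(1-p)}{p(1-\lambda)}$, equivalently $t_* = \log\frac{\lambda(1-p)}{p(1-\lambda)}$. The hypothesis $0 < p < \lambda < 1$ is exactly what guarantees $\frac{\lambda(1-p)}{p(1-\lambda)} > 1$, so that $t_* > 0$ and the critical point lies in the open half-line where we are optimizing.

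Next I would check this critical point is indeed the maximum. One way: $H'(t) = \lambda - \sigma(t)$ where $\sigma(t) = \frac{pe^t}{1-p+pe^t}$ is strictly increasing in $t$ (it is a logistic-type function), from $\sigma(0) = p$ up to $\sigma(\infty) = 1$; since $p < \lambda < 1$, $H'$ is positive on $(0, t_*)$ and negative on $(t_*, \infty)$, so $H$ is strictly increasing then strictly decreasing, hence $t_*$ is the global maximum on $(0,\infty)$. (Alternatively, $H''(t) = -\sigma(t)(1-\sigma(t)) < 0$, so $H$ is strictly concave.)

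Finally I would substitute $t_*$ back into $H$. At $t_*$ we have $pe^{t_*} = \frac{\lambda(1-p)}{1-\lambda}$, so $1 - p + pe^{t_*} = 1 - p + \frac{\lambda(1-p)}{1-\lambda} = (1-p)\bigl(1 + \frac{\lambda}{1-\lambda}\bigr) = \frac{1-p}{1-\lambda}$. Therefore
$$H(t_*) = \lambda \log\frac{\lambda(1-p)}{p(1-\lambda)} - \log\frac{1-p}{1-\lambda} = \lambda\log\frac{\lambda}{p} + \lambda\log\frac{1-p}{1-\lambda} - \log\frac{1-p}{1-\lambda},$$
and collecting the last two terms gives $H(t_*) = \lambda\log\frac{\lambda}{p} + (1-\lambda)\log\frac{1-\lambda}{1-p}$, which is the claimed value of $\sup_{t>0} H(t)$.

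There is essentially no serious obstacle here; this is a routine one-variable optimization, and the only point deserving care is verifying that the hypothesis $p < \lambda < 1$ places the critical point strictly inside $(0,\infty)$ (rather than at the boundary $t=0$), so that the supremum is attained at $t_*$ and equals $H(t_*)$ rather than $H(0) = 0$.
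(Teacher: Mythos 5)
Your proof is correct and follows essentially the same route as the paper, which simply records that the supremum is attained at $t_0 = \log\left(\frac{\lambda(1-p)}{p(1-\lambda)}\right)$ and leaves the calculus implicit. You have merely filled in the routine details (positivity of the critical point under $p<\lambda<1$, concavity, and the substitution), all of which are consistent with the paper's argument.
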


\begin{lem}\label{LemPR2}
 If $0<\lambda < p < 1$, then
 
$$\sup_{t < 0}\limits H(t) = \lambda \log (\frac{\lambda}{p}) + (1-\lambda) \log(\frac{1-\lambda}{1-p} ).$$ 
\end{lem}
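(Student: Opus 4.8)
The plan is to reduce this statement to Lemma \ref{LemPR1} by a symmetry argument rather than redoing the calculus optimization. The function $H_{\lambda,p}(t) = \lambda t - \log(1 - p + pe^t)$ is the logarithmic moment generating function of a shifted Bernoulli variable, and the Legendre-type value $\lambda \log(\lambda/p) + (1-\lambda)\log((1-\lambda)/(1-p))$ is the relative entropy $D(\lambda \| p)$, which is manifestly symmetric under the simultaneous swap $\lambda \mapsto 1-\lambda$, $p \mapsto 1-p$. So the first step is to record the identity
$$H_{\lambda,p}(t) = H_{1-\lambda,\,1-p}(-t),$$
which one checks directly: $\lambda t - \log(1-p+pe^t) = -(1-\lambda)t + t - \log(e^t(1-p+pe^t)/e^t)$, and rearranging $-\log(1-p+pe^t) + t = -\log(e^{-t}(1-p) + p) = -\log((1-p)e^{-t} + p)$, so the right-hand side with $t$ replaced by $-t$ becomes $-(1-\lambda)(-t) - \log((1-p)e^{-(-t)}+p)$... — the cleanest route is simply to substitute $u=-t$ and expand both sides; this is a one-line verification.

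Given that identity, the second step is immediate: under the hypothesis $0 < \lambda < p < 1$ we have $0 < 1-p < 1-\lambda < 1$, so $(1-\lambda, 1-p)$ satisfies the hypothesis of Lemma \ref{LemPR1} (with the roles of the ``$p$'' and ``$\lambda$'' parameters there played by $1-p$ and $1-\lambda$ respectively, and note $1-p < 1-\lambda$). Therefore
$$\sup_{t<0} H_{\lambda,p}(t) = \sup_{t<0} H_{1-\lambda,1-p}(-t) = \sup_{u>0} H_{1-\lambda,1-p}(u) = (1-\lambda)\log\!\left(\frac{1-\lambda}{1-p}\right) + \lambda \log\!\left(\frac{\lambda}{p}\right),$$
where the last equality is Lemma \ref{LemPR1} applied to the pair $(1-\lambda,1-p)$ and the final expression is rearranged into the stated form. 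This finishes the proof.

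If one prefers a self-contained argument not invoking Lemma \ref{LemPR1}, the alternative is to differentiate directly: $H'(t) = \lambda - \frac{pe^t}{1-p+pe^t}$, which vanishes exactly when $\frac{pe^t}{1-p+pe^t} = \lambda$, i.e. $e^t = \frac{\lambda(1-p)}{p(1-\lambda)}$. Under $\lambda < p$ this critical value of $e^t$ is strictly less than $1$, so the critical point $t^\ast = \log\frac{\lambda(1-p)}{p(1-\lambda)}$ is negative, and since $H$ is concave ($H''(t) = -\frac{p(1-p)e^t}{(1-p+pe^t)^2} < 0$) this is the global maximum and it lies in the admissible region $t<0$. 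Substituting $t^\ast$ and simplifying yields the claimed relative-entropy expression. I expect the only mildly fiddly point in either route to be the bookkeeping in the final algebraic simplification — matching $\lambda t^\ast - \log(1-p+pe^{t^\ast})$ against $\lambda\log(\lambda/p) + (1-\lambda)\log((1-\lambda)/(1-p))$ — but this is routine and identical in structure to the computation already carried out for Lemma \ref{LemPR1}; there is no real obstacle here, the lemma being essentially the mirror image of the preceding one.
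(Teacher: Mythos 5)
Your proposal is correct. The paper's own proof is exactly your ``alternative'' route, stated even more tersely: it simply records that $\sup_{t<0}H(t)=H(t_0)$ at the critical point $t_0=\log\bigl(\frac{\lambda(1-p)}{p(1-\lambda)}\bigr)$ (which lies in $t<0$ precisely because $\lambda<p$, and is the global maximum by concavity), the evaluation $H(t_0)=\lambda\log\frac{\lambda}{p}+(1-\lambda)\log\frac{1-\lambda}{1-p}$ following from $1-p+pe^{t_0}=\frac{1-p}{1-\lambda}$. Your primary route, the duality $H_{\lambda,p}(t)=H_{1-\lambda,1-p}(-t)$ combined with Lemma \ref{LemPR1} applied to the pair $(1-\lambda,1-p)$ (admissible since $\lambda<p$ gives $1-p<1-\lambda$), is a genuinely different and valid reduction: the identity is true (cleanly seen from $1-p+pe^t=e^t\bigl(p+(1-p)e^{-t}\bigr)$, so $\lambda t-\log(1-p+pe^t)=-(1-\lambda)t-\log\bigl(p+(1-p)e^{-t}\bigg)$ — your inline manipulation is a bit garbled but the substitution you propose does verify it), and it exposes the relative-entropy symmetry $D(\lambda\|p)=D(1-\lambda\|1-p)$ so that no optimization needs to be redone. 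What the paper's direct route buys instead is the explicit formula for $t_0$, which it reuses verbatim in the quantitative Lemmas \ref{LemPR3} and \ref{LemPR4}; your symmetry argument would not by itself supply that, but for the present lemma both arguments are complete.
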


\begin{lem}\label{LemPR3}
 If $p=p(n)=K n^b$ and $\lambda = \lambda(n)= n^a$, where $K>0$ and $0>a>b$, then
 there exists $n_0 \in \bbbn$ such that, for all $n \geq n_0,$
 
 $$\sup_{t > 0}\limits H(t) \geq \frac{(a-b)}{2} n^a \log n.$$
\end{lem}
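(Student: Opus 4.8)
The plan is to reduce the statement directly to Lemma \ref{LemPR1} and then perform two elementary asymptotic estimates. First I would check that the hypotheses of Lemma \ref{LemPR1} are met for all large $n$: since $a<0$ we have $\lambda(n)=n^a<1$ for $n\ge 2$; since $K>0$ we have $p(n)=Kn^b>0$; and since $b<a$ we have $p(n)/\lambda(n)=Kn^{b-a}\to 0$, so $p(n)<\lambda(n)$ once $n$ is large. Hence, for $n$ large enough, $0<p<\lambda<1$ and Lemma \ref{LemPR1} gives
$$\sup_{t>0}H(t)=\lambda\log\frac{\lambda}{p}+(1-\lambda)\log\frac{1-\lambda}{1-p}.$$

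Next I would estimate the two terms on the right. The first is exact: $\lambda\log(\lambda/p)=n^a\bigl((a-b)\log n-\log K\bigr)$, and its leading behaviour is $(a-b)\,n^a\log n$ because the coefficient $(a-b)\log n$ diverges. For the (negative) second term I would write $\tfrac{1-\lambda}{1-p}=1-\tfrac{\lambda-p}{1-p}$, note that $x_n:=\tfrac{\lambda-p}{1-p}\in(0,\tfrac12]$ for $n$ large since $\lambda,p\to 0$, and apply the elementary bound $\log(1-x)\ge -(2\log 2)x$ on $[0,\tfrac12]$ together with $(1-\lambda)x_n=(\lambda-p)\tfrac{1-\lambda}{1-p}\le\lambda$ to get $(1-\lambda)\log\tfrac{1-\lambda}{1-p}\ge -(2\log 2)\,n^a$.

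Combining the two bounds yields, for all large $n$,
$$\sup_{t>0}H(t)\ge n^a\bigl((a-b)\log n-\log K-2\log 2\bigr).$$
Since $a-b>0$ and $n^a>0$, the claimed inequality $\sup_{t>0}H(t)\ge\frac{a-b}{2}\,n^a\log n$ is equivalent to $\frac{a-b}{2}\log n\ge\log K+2\log 2$, which holds for every $n\ge n_0$ once $n_0$ is taken large enough to also absorb the finitely many constraints used above (and is trivially true for all such $n$ when $\log K+2\log 2\le 0$).

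I do not expect any real obstacle here. The only point demanding a little care is the sign and magnitude of the term $(1-\lambda)\log\tfrac{1-\lambda}{1-p}$: it is negative, so one must verify it is only $O(n^a)$ and hence negligible against the leading term $n^a(a-b)\log n$, whose coefficient grows without bound; the elementary logarithm inequality above settles this. Everything else is bookkeeping of the threshold $n_0$.
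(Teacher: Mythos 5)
Your proof is correct and follows essentially the same route as the paper: both evaluate the supremum in closed form (via Lemma \ref{LemPR1} / the explicit optimizer $t_0$), read off the leading term $(a-b)\,n^a\log n$ exactly, and show the remaining logarithmic term is only $O(n^a)$ by an elementary bound, absorbing the constants for $n$ large. The only difference is which elementary inequality handles that term (your $\log(1-x)\ge -(2\log 2)x$ on $[0,\tfrac12]$ versus the paper's $\log(1+x)\ge 2x$ for small negative $x$), which is immaterial.
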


\begin{lem}\label{LemPR4}
 If $p=p(n)=K_1 n^b$ and $\lambda = \lambda(n)=K_2 n^a$, where
  $K_1 >0$, $K_2 >0$ and $0>b>a$, then there exists $n_0 \in \bbbn$ such that, for all $n \geq n_0,$
 
 $$\sup_{t < 0}\limits H(t) \geq \frac{K_1}{4} n^b.$$
\end{lem}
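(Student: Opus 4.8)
Recall $H(t) = \lambda t - \log(1-p+pe^t)$ with $p = K_1 n^b$, $\lambda = K_2 n^a$, and $0 > b > a$, so $p$ dominates $\lambda$ and both tend to $0$. The plan is to not try to locate the exact maximizer, but to simply plug in a convenient negative value of $t$ and bound $\sup_{t<0} H(t)$ from below by $H(t_0)$ for that choice. A natural candidate is $t_0 = \log(\lambda/p) = \log(K_2/K_1) + (a-b)\log n$, which is negative for large $n$ since $a < b$; this is precisely the value that maximizes the ``clean'' expression appearing in Lemmas \ref{LemPR1}--\ref{LemPR2}, so $H(t_0) = \lambda\log(\lambda/p) + (1-\lambda)\log\!\big((1-\lambda)/(1-p)\big)$.

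The next step is to estimate this quantity asymptotically. Since $\lambda \to 0$, the term $\lambda \log(\lambda/p) = K_2 n^a(\log(K_2/K_1) + (a-b)\log n)$ is of order $n^a \log n$, which is negligible compared with $n^b$ because $a < b$. The dominant term is $(1-\lambda)\log\!\big((1-\lambda)/(1-p)\big)$. Using $\log(1-\lambda) = -\lambda + O(\lambda^2)$ and $\log(1-p) = -p + O(p^2)$ for large $n$, one gets $(1-\lambda)\log\!\big((1-\lambda)/(1-p)\big) = (1-\lambda)(p - \lambda + O(p^2)) = p - \lambda - \lambda p + O(p^2) = p(1 + o(1))$, since $\lambda = o(p)$ and $p \to 0$. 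Hence $\sup_{t<0} H(t) \geq H(t_0) = K_1 n^b(1 + o(1)) + O(n^a\log n)$, and therefore for $n$ large enough this exceeds $\tfrac{K_1}{4} n^b$, giving the claimed bound with a comfortable margin.

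Alternatively, if one prefers to avoid relying on Lemmas \ref{LemPR1}--\ref{LemPR2} (which are stated under $0<\lambda<p<1$ and conclude an equality rather than the lower bound we need once the maximizer leaves $(-\infty,0)$ — though here $\lambda/p \to 0$ so $t_0<0$ eventually), one can directly bound $H(t_0) = \lambda\log(\lambda/p) - \log(1 - p + p\cdot(\lambda/p)) = \lambda\log(\lambda/p) - \log(1-p+\lambda)$. Then $-\log(1-p+\lambda) \geq -\log(1-p) = p + O(p^2) \geq \tfrac{1}{2}p$ once $\lambda \leq p$ and $n$ is large, while $|\lambda\log(\lambda/p)| \leq K_2(b-a) n^a\log n + O(n^a) = o(n^b)$; combining, $H(t_0) \geq \tfrac12 K_1 n^b - o(n^b) \geq \tfrac14 K_1 n^b$ for $n \geq n_0$.

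The only mildly delicate point — and the ``main obstacle,'' such as it is — is keeping track of which terms are genuinely negligible: one must use the strict inequality $a < b$ to ensure both $n^a \log n = o(n^b)$ and $\lambda = o(p)$, and use $p \to 0$ to control the $O(p^2)$ remainder in the logarithm expansion; none of this is hard, but the constant $\tfrac14$ must be shown to leave enough slack, which it does since the leading constant is $K_1$ (or at worst $\tfrac{K_1}{2}$ in the cruder estimate). This mirrors exactly the structure of the proof of Lemma \ref{LemPR3}, with the roles of ``$t>0$'' and ``$t<0$'' and of $p$ versus $\lambda$ interchanged.
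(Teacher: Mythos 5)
Your main argument is correct and is essentially the paper's proof: for large $n$ one has $0<\lambda<p<1$, so $\sup_{t<0}H(t)$ equals the closed-form value of Lemma \ref{LemPR2}, whose dominant term $(1-\lambda)\log\bigl(\frac{1-\lambda}{1-p}\bigr)\sim p=K_1 n^b$ swamps the $O(n^a\log n)$ contribution of $\lambda\log(\lambda/p)$, which yields the bound with constant $\frac{K_1}{4}$ for $n$ large (this is exactly the paper's computation at the maximizer). Two minor slips, neither fatal: the exact maximizer is $\log\bigl(\frac{\lambda(1-p)}{p(1-\lambda)}\bigr)$, not $\log(\lambda/p)$, so your displayed value of $H(t_0)$ in the first paragraph is really the supremum from Lemma \ref{LemPR2} rather than $H$ at your $t_0$; and in the alternative argument the step $-\log(1-p+\lambda)\geq-\log(1-p)$ is false (the left side is smaller since $\lambda>0$) --- replace it by $-\log\bigl(1-(p-\lambda)\bigr)\geq p-\lambda\geq \tfrac12 p$ for $n$ large, after which that route also works.
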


\begin{lem}\label{LemPR5}
 If $p=p(n)=K n^b$ and $\lambda = \lambda(n)= n^a$, where $K>0$ and $0>a>b$, then
 there exists $n_0 \in \bbbn$ such that, for all $n \geq n_0,$
  
 $$\inf_{t > 0}\limits e^{- \lambda t n} (1-p+p e^t )^n \leq e^{-\frac{(a-b)}{2} n^{1+a} \log n}.$$
\end{lem}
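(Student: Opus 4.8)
The plan is to recognize that the expression inside the infimum is nothing but $e^{-nH(t)}$ in disguise, and then quote Lemma~\ref{LemPR3}. Indeed, for every $t>0$,
$$e^{-\lambda t n}\left(1-p+pe^{t}\right)^{n} = \left(e^{-\lambda t}\bigl(1-p+pe^{t}\bigr)\right)^{n} = \exp\!\left(n\bigl(-\lambda t + \log(1-p+pe^{t})\bigr)\right) = e^{-nH(t)},$$
where $H = H_{\lambda,p}$ is the function defined just before Lemma~\ref{LemPR1}. Since the map $x\mapsto e^{-nx}$ is decreasing, taking the infimum over $t>0$ gives
$$\inf_{t>0} e^{-\lambda t n}\left(1-p+pe^{t}\right)^{n} = \inf_{t>0} e^{-nH(t)} = e^{-n\sup_{t>0}H(t)}.$$

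It then remains to feed in the lower bound on $\sup_{t>0}H(t)$ supplied by Lemma~\ref{LemPR3}, which applies verbatim under the present hypotheses $p=Kn^{b}$, $\lambda=n^{a}$, $K>0$, $0>a>b$. That lemma provides an $n_0\in\bbbn$ such that, for all $n\geq n_0$, $\sup_{t>0}H(t) \geq \frac{a-b}{2}\,n^{a}\log n$. Combining with the displayed identity and using $a-b>0$,
$$\inf_{t>0} e^{-\lambda t n}\left(1-p+pe^{t}\right)^{n} = e^{-n\sup_{t>0}H(t)} \leq \exp\!\left(-n\cdot\frac{a-b}{2}\,n^{a}\log n\right) = e^{-\frac{a-b}{2}\,n^{1+a}\log n},$$
which is exactly the claimed bound, with the same $n_0$ as in Lemma~\ref{LemPR3}.

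There is essentially no obstacle here: the statement is a direct corollary of Lemma~\ref{LemPR3}, the only content being the algebraic rewriting $e^{-\lambda t n}(1-p+pe^{t})^{n}=e^{-nH(t)}$ and the elementary fact that minimizing a decreasing exponential amounts to maximizing the exponent. One should perhaps note explicitly that this manipulation is valid for all $n$ (not just large $n$), so that the restriction $n\geq n_0$ is inherited solely from Lemma~\ref{LemPR3}, and that the sign of $1+a$ plays no role in the argument.
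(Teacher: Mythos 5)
Your proof is correct and follows exactly the paper's argument: rewrite the quantity as $e^{-nH(t)}$, observe that $\inf_{t>0}e^{-nH(t)}=e^{-n\sup_{t>0}H(t)}$, and invoke Lemma \ref{LemPR3} to bound $\sup_{t>0}H(t)$ from below. Nothing to add.
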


\begin{lem}\label{LemPR6}
 If $p=p(n)=K_1 n^b$ and $\lambda = \lambda(n)=K_2 n^a$, where
  $K_1 >0$, $K_2 >0$ and $0>b>a$, then there exists $n_0 \in \bbbn$ such that, for all $n \geq n_0,$

  $$\inf_{t < 0}\limits e^{- \lambda t n} (1-p+p e^t )^{K_U n} \leq e^{-\frac{K_1 K_U}{4} n^{1+b}}.$$
\end{lem}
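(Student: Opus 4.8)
The plan is to reduce the statement to Lemma \ref{LemPR4}, absorbing the extra exponent $K_U$ into a rescaling of the variable $t$, so that no new estimate is actually required.

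First I would record the elementary identity
$$e^{-\lambda t n}\,(1-p+p e^t)^{K_U n} = \left( e^{-(\lambda/K_U)\,t}\,(1-p+p e^t) \right)^{K_U n} = e^{-K_U n\,\widetilde{H}(t)},$$
where $\widetilde{H} := H_{\lambda/K_U,\,p}$, i.e.\ $\widetilde{H}(t) = \frac{\lambda}{K_U}\,t - \log(1-p+p e^t)$. Since $x \mapsto e^{-K_U n x}$ is decreasing, taking the infimum over $t<0$ turns this into
$$\inf_{t<0}\limits e^{-\lambda t n}\,(1-p+p e^t)^{K_U n} = \exp\!\left( -K_U n \sup_{t<0}\limits \widetilde{H}(t) \right).$$

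Next I would apply Lemma \ref{LemPR4} to $\widetilde{H}$. In the notation of that lemma, the coefficient of $n^a$ is now $\widetilde{K}_2 := K_2/K_U > 0$ in place of $K_2$, while $p(n)=K_1 n^b$ and the ordering $0>b>a$ are unchanged, so Lemma \ref{LemPR4} provides $n_0 \in \bbbn$ with $\sup_{t<0}\limits \widetilde{H}(t) \geq \frac{K_1}{4}\,n^b$ for all $n \geq n_0$. Substituting this into the display above gives, for $n \geq n_0$,
$$\inf_{t<0}\limits e^{-\lambda t n}\,(1-p+p e^t)^{K_U n} \leq e^{-K_U n \cdot \frac{K_1}{4} n^b} = e^{-\frac{K_1 K_U}{4}\, n^{1+b}},$$
which is the claim.

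The only point to watch is the bookkeeping: one must check that $K_U$ enters solely through the still strictly positive constant $\widetilde{K}_2$ and not through the exponent $a$, so that the hypotheses of Lemma \ref{LemPR4} are met verbatim. I do not expect a genuine obstacle here. If one preferred a self-contained argument, one could instead choose $t=t(n)<0$ with $e^{t}$ of order $n^{a-b}$ and check by a short computation that $e^{-(\lambda/K_U)t}(1-p+p e^t) \leq 1 - \frac{3K_1}{4}\,n^b \leq e^{-\frac{K_1}{4}\,n^b}$ for $n$ large (using $a<b<0$, so that the $t$-dependent correction is $o(n^b)$), and then raise to the power $K_U n$.
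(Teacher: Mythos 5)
Your argument is correct and is essentially identical to the paper's own proof: both rewrite the quantity as $\exp\bigl(-K_U n \sup_{t<0} H_{\lambda/K_U,\,p}(t)\bigr)$ and then invoke Lemma \ref{LemPR4}, whose lower bound $\frac{K_1}{4}n^b$ depends only on $K_1$, so the rescaled constant $K_2/K_U>0$ is harmless. Nothing further is needed.
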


\begin{lem}\label{LemPR7}
 Assume there exist $b \in (-1,0)$ and $K>0$ such that,  for all $n \geq n_0$ and 
 for all $j \in \llbracket 1,n \rrbracket $, $P_n^j \leq K n^b$. Then, almost surely,
 
$$ \quad \liminf_{n \rightarrow +\infty}\limits \frac{\log \Nb}{\log n} \leq 1+b.$$ 

\end{lem}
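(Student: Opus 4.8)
The plan is to use a first-moment (Markov) argument on $\Nb$, combined with a Borel--Cantelli argument applied along a suitable subsequence of integers, and then to fill the gaps between consecutive subsequence terms. First I would observe that $\Nb = \sum_{j=1}^{n} X_j$ is a sum of Bernoulli variables with $\E[X_j] = P_n^j \leq K n^b$ (the $X_j$ need not be independent, but we only need linearity of expectation), so $\E[\Nb] \leq K n^{1+b}$. Fix any $\rho > 0$ with $1+b+\rho < 1$ and apply Markov's inequality:
$$\P\left( \Nb \geq n^{1+b+\rho} \right) \leq \frac{\E[\Nb]}{n^{1+b+\rho}} \leq K n^{-\rho}.$$
Since $\sum_k K n_k^{-\rho} < +\infty$ for the subsequence $n_k = 2^k$ (or more generally $n_k = k^{\lceil 2/\rho\rceil}$), Borel--Cantelli gives that, almost surely, $N_{n_k}^{\varepsilon}(\beta) < n_k^{1+b+\rho}$ for all $k$ large enough, hence $\liminf_k \frac{\log N_{n_k}^{\varepsilon}(\beta)}{\log n_k} \leq 1+b+\rho$.

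The remaining issue is that the definition of $f_g$ uses the $\liminf$ over \emph{all} integers $n$, not just the subsequence, so I need to interpolate. For $n_k \leq n < n_{k+1}$, each dyadic-type interval $[\tfrac{j}{n}, \tfrac{j+1}{n}]$ contributing to $\Nb$ can be covered by a bounded number (independent of $k$, since $n_{k+1}/n_k$ is bounded for the geometric choice $n_k = 2^k$) of intervals of the form $[\tfrac{i}{n_{k+1}}, \tfrac{i+1}{n_{k+1}}]$; but the increment over a small interval being of a given size does not translate directly into a bound at the coarser scale. A cleaner route is to take $n_k = 2^k$ and note $N^{\varepsilon}_n(\beta) \le$ a constant times $\max(N^{2\varepsilon}_{n_k}(\beta), N^{2\varepsilon}_{n_{k+1}}(\beta))$ is \emph{not} automatic; instead I would simply choose the subsequence so that along it we already get the bound $1+b+\rho$, and then invoke that $\liminf_{n\to\infty}\frac{\log \Nb}{\log n} \le \liminf_{k\to\infty}\frac{\log N^{\varepsilon}_{n_k}(\beta)}{\log n_k}$ because the $\liminf$ over a subsequence dominates the full $\liminf$. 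This gives, almost surely, $\liminf_{n\to\infty}\frac{\log \Nb}{\log n} \le 1+b+\rho$ for the fixed $\beta$ and $\varepsilon$ at hand.

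Finally I would let $\rho \downarrow 0$ through a countable sequence of values, taking the intersection of the corresponding almost-sure events, to obtain $\liminf_{n\to\infty}\frac{\log \Nb}{\log n} \le 1+b$ almost surely. The hypothesis $b > -1$ is used only to guarantee $1+b > 0$ so the statement is non-vacuous and so that $\rho$ can be chosen with $1+b+\rho<1$ when also $b<0$; the hypothesis $b<0$ guarantees $n^b \to 0$ so $P_n^j$ is a legitimate probability for large $n$. I expect the main obstacle to be purely bookkeeping: making sure the Borel--Cantelli step is run along a subsequence whose terms grow fast enough for summability of $n_k^{-\rho}$ while being dense enough (geometric growth suffices) that passing from the subsequential $\liminf$ to the full $\liminf$ is legitimate — which it is, trivially, since a subsequential $\liminf$ is always $\ge$ the full $\liminf$, so no interpolation between scales is actually needed.
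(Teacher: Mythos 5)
Your proof is correct, and it takes a genuinely different, more elementary route than the paper's. The paper runs a Chernoff-type exponential bound: using independence of the $X_j$ and the monotonicity of $p \mapsto 1-p+pe^t$, it gets $\P(\Nb \geq n^{1+a}) \leq e^{-nt\lambda}(1-p+pe^t)^n$ with $p=Kn^b$, $\lambda=n^a$, $a\in(b,0)$, then minimizes over $t>0$ via Lemma \ref{LemPR5} (which rests on Lemma \ref{LemPR3}) to obtain $\P(\Nb \geq n^{1+a}) \leq e^{-\frac{(a-b)}{2}n^{1+a}\log n}$; this is summable over \emph{all} $n$, so Borel--Cantelli yields $\Nb \leq n^{1+a}$ for every large $n$ (in fact a $\limsup$ bound), and one lets $a\downarrow b$. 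You instead use only linearity of expectation ($\E[\Nb]\leq Kn^{1+b}$, no independence needed) and Markov's inequality, which is not summable over all $n$, and you compensate by applying Borel--Cantelli along the geometric subsequence $n_k=2^k$ and then invoking the correct observation that the $\liminf$ over all $n$ is dominated by the $\liminf$ along any subsequence; letting $\rho\downarrow 0$ through a countable set finishes it. What each approach buys: yours is lighter and exploits that the lemma only claims a $\liminf$ bound, so subsequential control suffices; the paper's heavier machinery gives eventually-for-all-$n$ control and is in any case indispensable for the companion Lemma \ref{LemPR8}, where the lower bound $\liminf \geq 1+b$ requires the estimate to hold for all large $n$ and a first-moment/subsequence argument could not work, so the paper simply reuses the same exponential toolkit in both lemmas. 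Two harmless quibbles: your constraint $1+b+\rho<1$ is not needed (any $\rho>0$ works), and the meandering discussion of interpolation between scales could be deleted, since, as you ultimately note, no interpolation is required.
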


\begin{lem}\label{LemPR8}
 Assume there exist an open interval $U$, a real $b \in (-1,0)$ and $K>0$ such that, for all $n \geq n_0$ and
 for all $j \in J_n(U)$, $P_n^j \geq K n^b$. Then, almost surely,
$$\quad \liminf_{n \rightarrow +\infty}\limits \frac{\log \Nb}{\log n} \geq 1+b.$$ 

\end{lem}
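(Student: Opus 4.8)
The plan is to obtain a lower bound on $\Nb = \sum_{j\in\{0,\dots,n-1\}} X_j$ valid with high probability by controlling the large deviations of the sum of the Bernoulli variables $X_j$ \emph{below} their mean. First I would restrict attention to the indices $j\in J_n(U)$, of which there are at least $K_U n$, and use the hypothesis $P_n^j \geq K n^b$. Since the $X_j$ are not quite independent, the first step is to reduce to the independent case: as in the proof of Lemma \ref{LemPR7}, I expect one can either work with the subfamily of increments that are measurable with respect to disjoint blocks of the Poisson/uniform data, or invoke the representation \eqref{IICF} to argue that the relevant lower-tail events behave as if independent up to a harmless error. Granting (effective) independence of $(X_j)_{j\in J_n(U)}$, the probability that $\Nb$ is abnormally small is bounded by an exponential Chernoff estimate: for $t<0$,
$$
\P\!\left(\Nb \leq \lambda |J_n(U)| \right) \;\leq\; e^{-\lambda t |J_n(U)|}\prod_{j\in J_n(U)}\bigl(1-P_n^j+P_n^j e^{t}\bigr)
\;\leq\; e^{-\lambda t \,K_U n}\bigl(1-Kn^b+Kn^b e^{t}\bigr)^{K_U n},
$$
where in the last step I use $P_n^j\geq Kn^b$ and monotonicity of $x\mapsto 1-x+xe^t$ in $x$ (valid since $t<0$).

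Next I would choose the truncation level $\lambda=\lambda(n)$ to be of order $n^{a}$ with $b<a<0$ — concretely $\lambda(n)=K_2 n^{a}$ for a suitable constant — so that Lemma \ref{LemPR6} applies directly with $p(n)=Kn^b$, giving
$$
\inf_{t<0} e^{-\lambda t n}(1-p+pe^t)^{K_U n} \;\leq\; e^{-\frac{K K_U}{4} n^{1+b}}.
$$
Hence $\P\bigl(\Nb \leq K_2 K_U\, n^{1+a}\bigr)\leq e^{-\frac{KK_U}{4}n^{1+b}}$, and since $b>-1$ this bound is summable in $n$. Borel--Cantelli then yields that, almost surely, $\Nb \geq K_2 K_U\, n^{1+a}$ for all $n$ large enough, so
$$
\liminf_{n\to\infty}\frac{\log \Nb}{\log n} \;\geq\; 1+a.
$$
Since this holds for every $a\in(b,0)$, letting $a\downarrow b$ gives the claimed bound $\liminf_{n\to\infty}\frac{\log\Nb}{\log n}\geq 1+b$.

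The main obstacle I anticipate is the dependence among the $X_j$: the increments $B(\tfrac{j+1}{n})-B(\tfrac{j}{n})$ over the dyadic-type grid all involve the same global sequences $(\Gamma_i),(V_i),(\gamma_i)$, so a genuine product formula for $\prod_j(1-P_n^j+P_n^j e^t)$ is not immediate. I expect the paper handles this exactly as in Lemma \ref{LemPR7} — either the increments over disjoint subintervals are in fact independent because each depends only on the $V_i$ falling in that subinterval (which, conditionally on their count, are independent across disjoint intervals), or one conditions on the configuration of Poisson points and applies the Chernoff bound on the conditional law, where the $\gamma_i$ supply genuine independence. Either route makes the displayed product bound legitimate; the rest is the routine optimisation over $t<0$ packaged in Lemma \ref{LemPR6} together with Borel--Cantelli. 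A secondary point to check is that $|J_n(U)|\geq K_U n$ is used with the \emph{same} $K_U$ appearing in Lemma \ref{LemPR6}, which is why that lemma was stated with the exponent $K_U n$ rather than $n$.
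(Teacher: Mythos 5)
Your overall mechanism --- restrict to $j\in J_n(U)$, apply a Chernoff bound with $t<0$, use monotonicity of $p\mapsto 1-p+pe^t$ to replace $P_n^j$ by $Kn^b$, invoke Lemma \ref{LemPR6}, and finish with Borel--Cantelli --- is exactly the paper's. But your choice of the auxiliary exponent is reversed, and as written the key step fails. You take $\lambda(n)\asymp n^a$ with $b<a<0$; however Lemma \ref{LemPR6} is stated (and can only be true) for $0>b>a$, i.e.\ $a<b$: the lower-tail rate $\sup_{t<0}H_{\lambda,p}(t)$ is only useful when the threshold $\lambda$ lies below the success probability $p$ (Lemma \ref{LemPR2} requires $\lambda<p$; for $\lambda>p$ the supremum over $t<0$ degenerates at $t\to0^-$). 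Concretely, if $P_n^j$ equals $Kn^b$ for $j\in J_n(U)$ and vanishes elsewhere --- a situation allowed by the hypothesis --- then $\E[\Nb]\asymp n^{1+b}$, so for $a>b$ the event $\{\Nb\le n^{1+a}\}$ has probability tending to $1$, not summably small; your displayed bound $\P\left(\Nb\le K_2K_U n^{1+a}\right)\le e^{-\frac{KK_U}{4}n^{1+b}}$ is false, and indeed if your argument were valid for every $a\in(b,0)$ it would give $\liminf\ge1$, which is stronger than the lemma and wrong in general. The fix is what the paper does: fix $a<b$, so that the threshold $n^{1+a}$ sits below the mean, apply Lemma \ref{LemPR6} legitimately to get $\P\left(\Nb\le n^{1+a}\right)\le e^{-\frac{KK_U}{4}n^{1+b}}$, sum and use Borel--Cantelli to conclude that almost surely $\liminf_{n\to+\infty}\frac{\log\Nb}{\log n}\ge 1+a$, and then let $a$ increase to $b$.

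A further remark: the obstacle you anticipate about dependence of the $X_j$ is not an issue here. $B$ is the independent-increments multistable motion: by \eqref{IICF} the joint characteristic function of increments over disjoint intervals factorizes, so the increments $B(\frac{j+1}{n})-B(\frac{j}{n})$, and hence the $X_j$, are genuinely independent; the product formula $\E\left[e^{t\sum_j X_j}\right]=\prod_j(1-P_n^j+P_n^je^t)$ is exact, and no conditioning, block decomposition or ``approximate independence'' argument is needed (this is precisely what the paper uses in the proof of Lemma \ref{LemPR7}).
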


\subsubsection{Proofs}

\begin{proof}[Proof of Lemma \ref{LemPR1}]

$\sup_{t >0}\limits H(t) = H(t_0)$ where $t_0 = \log \left( \frac{\lambda (1-p)}{p(1-\lambda)}\right).$
 
\end{proof}

\begin{proof}[Proof of Lemma \ref{LemPR2}]

$\sup_{t <0}\limits H(t)= H(t_0)$ where $t_0 = \log \left( \frac{\lambda (1-p)}{p(1-\lambda)}\right).$
 
\end{proof}

\begin{proof}[Proof of Lemma \ref{LemPR3}]

With $t_0 = \log \left( \frac{\lambda (1-p)}{p(1-\lambda)}\right)$, one has, for $n$ large enough,
\begin{eqnarray*}
 H(t_0) & = & n^a \log \left(\frac{n^{a-b}}{K}\right) + (1-n^a) \log \left(\frac{1-n^a}{1-Kn^b}\right)\\
 & = & n^a \log \left(\frac{n^{a-b}}{K}\right) + (1-n^a) \log \left(1+ \frac{Kn^b - n^a}{1-Kn^b}\right)\\
 & \geq & n^a \log \left(\frac{n^{a-b}}{K}\right) + 2(1-n^a) \left(\frac{Kn^b - n^a}{1-Kn^b}\right)\\
 & = & (a-b) n^a \log n + n^a \left[ 2(1-n^a) \frac{Kn^{b-a} - 1}{1-Kn^b} - \log K \right].
\end{eqnarray*}

Since $\lim_{n \rightarrow +\infty}\limits 2(1-n^a) \frac{Kn^{b-a} - 1}{1-Kn^b} - \log K = -2 - \log K$,
 there exists $n_0 \in \bbbn$ such that, for all $n \geq n_0$,
$$ \frac{n^{-a}H(t_0)}{\log n} \geq (a-b) + \frac{1}{\log n}\left[ 2(1-n^a) \frac{Kn^{b-a} - 1}{1-Kn^b} - \log K \right] \geq \frac{(a-b)}{2}.$$

\end{proof}

\begin{proof}[Proof of Lemma \ref{LemPR4}]

With $t_0 = \log \left( \frac{\lambda (1-p)}{p(1-\lambda)}\right)$, one has, for $n$ large enough,
\begin{eqnarray*}
 H(t_0) & = & K_2 n^a \log \left(\frac{K_2 n^{a-b}}{K_1}\right) + (1-K_2 n^a) \log \left(1+ \frac{K_1 n^b - K_2 n^a}{1-K_1 n^b}\right)\\
 & \geq & K_2 n^a \log \left(\frac{K_2 n^{a-b}}{K_1}\right) + \frac{1}{2}(1-K_2 n^a) \left(\frac{K_1 n^b - K_2 n^a}{1-K_1 n^b}\right)\\
 & = & n^b \left[ K_2 n^{a-b} \log \left(\frac{K_2 n^{a-b}}{K_1}\right) + \frac{1}{2}(1-K_2 n^a) \frac{K_1 - K_2 n^{a-b}}{1-K_1 n^b} \right]\\
 \end{eqnarray*}
which yields the result since $\lim_{n \rightarrow +\infty}\limits K_2 n^{a-b} \log (\frac{K_2 n^{a-b}}{K_1}) + \frac{1}{2}(1-K_2 n^a) \frac{K_1 - K_2 n^{a-b}}{1-K_1 n^b} = \frac{K_1}{2}.$
\end{proof}

\begin{proof}[Proof of Lemma \ref{LemPR5}]
 One has 
 \begin{eqnarray*}
  \inf_{t > 0}\limits e^{- \lambda t n} (1-p+p e^t )^n & = & \inf_{t > 0}\limits e^{-n H(t)}\\
  & = & e^{-n \sup_{t > 0}\limits H(t)}.
 \end{eqnarray*}
Lemma \ref{LemPR3} then implies that, for $n \geq n_0$,
$$\inf_{t > 0}\limits e^{- \lambda t n} (1-p+p e^t )^n \leq e^{- \frac{(a-b)}{2} n^{1+a} \log n}.$$
\end{proof}

\begin{proof}[Proof of Lemma \ref{LemPR6}]
 Write 
\begin{eqnarray*}
  \inf_{t < 0}\limits e^{- \lambda t n} (1-p+p e^t )^{K_U n} & = & e^{- K_U n \sup_{t <0}\limits H(t)}
  \end{eqnarray*}
  where $H(t) = \frac{\lambda}{K_U} t - \log (1 - p + p e^t ).$ Lemma \ref{LemPR4} ensures that, for $n \geq n_0$,
$$\inf_{t < 0}\limits e^{- \lambda t n} (1-p+p e^t )^n \leq e^{-\frac{K_1 K_U}{4} n^{1+b}}.$$

\end{proof}

 \begin{proof}[Proof of Lemma \ref{LemPR7}]
  Fix $a \in (b,0)$. Then, for all $t>0$,
  
  \begin{eqnarray*}
\P \left( \Nb \geq n^{1+a} \right) & = &  \P \left(e^{t \sum_{j=1}^n\limits X_j} \geq e^{t n^{1+a}} \right)\\
& \leq & e^{- n t \lambda} \E \left[ \prod_{j=1}^{n} e^{t X_j}\right]
  \end{eqnarray*}
where $\lambda = n^a$. The $X_j$ are independent and $\E \left[ e^{t X_j}\right] = 1- P_n^j + P_n^j e^t$, 
thus
$$\P \left( \Nb \geq n^{1+a} \right) \leq e^{- n t \lambda}\prod_{j=1}^{n} (1- P_n^j + P_n^j e^t), \quad \forall t>0.$$ 
For $t>0$, the function $p \mapsto 1-p+pe^t$ is increasing and so, by assumption on $P_n^j$,
$$\P \left( \Nb \geq n^{1+a} \right) \leq e^{- n t \lambda} (1- p + p e^t)^n,$$ 
where $p=K n^b$.
Minimizing over $t>0$ and using Lemma \ref{LemPR5}, one gets
$$\forall n \geq n_0, \P \left( \Nb \geq n^{1+a} \right) \leq e^{- \frac{(a-b)}{2} n^{1+a} \log n}$$
and thus
$$\sum_{n \in \bbbn}\limits \P \left( \Nb \geq n^{1+a} \right) < +\infty.$$
The Borel-Cantelli lemma then ensures that, almost surely,
$$\exists n_0 \in \bbbn, \forall n \geq n_0, \Nb \leq n^{1+a}$$ 
or
$$\liminf_{n \rightarrow +\infty}\limits \frac{\log \Nb}{\log n} \leq 1+a.$$
Since this inequality holds true for any $a \in (b,0)$ one has indeed that, almost surely, 
$$\liminf_{n \rightarrow +\infty}\limits \frac{\log \Nb}{\log n} \leq 1+b.$$

 \end{proof}

  \begin{proof}[Proof of Lemma \ref{LemPR8}]
  Fix $a <b$. For all $t <0$, 
    \begin{eqnarray*}
\P \left( \Nb \leq n^{1+a} \right) & \leq & \P \left( \sum_{j \in J_n(U)}\limits X_j \leq n^{1+a}\right)\\ 
& = &  \P \left(e^{t \sum_{j \in J_n(U)}\limits X_j} \geq e^{t n^{1+a}} \right)\\
& \leq & e^{-  t n^{1+a}}\prod_{j \in J_n(U)}\limits (1- P_n^j + P_n^j e^t).
  \end{eqnarray*}
  
  When $t<0$, the function $p \mapsto 1-p+pe^t$ is decreasing. As a consequence, by assumption on $P_n^j$
  and with $p=Kn^b$, $\lambda = n^a$, 
  one has, for $n$ large enough,
    \begin{eqnarray*}
     \P \left( \Nb \leq n^{1+a} \right) & \leq &  e^{-  n t \lambda} (1-p + pe^t)^{\# J_n(U)} \\
     & \leq & e^{-  n t \lambda} (1-p + pe^t)^{K_U n}.
    \end{eqnarray*}
Minimizing over $t<0$ and using Lemma \ref{LemPR6}, one gets
$$\forall n \geq n_0, \P \left( \Nb \leq n^{1+a} \right) \leq e^{-\frac{K_1 K_U}{4} n^{1+b}}$$
and thus 
$$\sum_{n \in \bbbn}\limits \P \left( \Nb \leq n^{1+a} \right) < +\infty.$$
As in the proof of Lemma \ref{LemPR7}, this leads to 
$$\forall a<b, \quad \mbox{almost surely,} \quad \liminf_{n \rightarrow +\infty}\limits \frac{\log \Nb}{\log n} \geq 1+a$$ 
and finally, almost surely,
$$\liminf_{n \rightarrow +\infty}\limits \frac{\log \Nb}{\log n} \geq 1+b.$$
   \end{proof}

  \subsection{Estimates of $P_n^j$}
  
  For $U$ an open interval, denote $c_U = \inf_{t \in U} \alpha(t)$ and $d_U = \sup_{t \in U} \alpha(t)$. 
  Set also $t_j = \frac{j}{n}.$
  
  \subsubsection{Lemmas}
  
  \begin{lem}\label{LemE1maj}
   Assume $\beta < \frac{1}{d}$. Then, $\exists K>0$ , $\exists n_0 \in \bbbn$ such that $\forall n \geq n_0$, $\forall j \in \llbracket 1,n \rrbracket$, 
   $$P_n^j \leq K n^{\alpha(t_j) \beta + \alpha(t_j) \varepsilon -1}.$$
   
  \end{lem}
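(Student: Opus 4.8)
The plan is to bound $P_n^j$ from above by a tail probability for a single increment of $B$ and to estimate that tail through the characteristic function \eqref{IICF}. Since $X_j \sim \mathrm{Bernoulli}(P_n^j)$, the number $P_n^j$ is precisely the probability that $|B(t_{j+1}) - B(t_j)|$ lies between $n^{-(\beta+\varepsilon)}$ and $n^{-(\beta-\varepsilon)}$, hence $P_n^j \leq \P\bigl(|B(t_{j+1}) - B(t_j)| \geq n^{-(\beta+\varepsilon)}\bigr)$. Setting $m=2$, $(\theta_1,\theta_2)=(-\theta,\theta)$ and $(t_1,t_2)=(t_j,t_{j+1})$ in \eqref{IICF}, the increment $\Delta_j := B(t_{j+1}) - B(t_j)$ is a symmetric random variable with real characteristic function $\varphi_j(\theta) = \exp\bigl(-\int_{t_j}^{t_{j+1}} |\theta|^{\alpha(s)}\, ds\bigr)$, so $0 \leq 1 - \varphi_j(\theta) \leq \int_{t_j}^{t_{j+1}} |\theta|^{\alpha(s)}\, ds$.

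Next I would invoke the classical characteristic-function tail bound: for a symmetric random variable $X$ with characteristic function $\varphi$ and any $a>0$,
$$\P(|X| \geq a) \leq \frac{a}{1-\sin 1} \int_0^{1/a} \bigl(1 - \varphi(\theta)\bigr)\, d\theta .$$
This follows from the identity $\frac{1}{u}\int_0^u (1-\varphi(\theta))\,d\theta = \E\bigl[1 - \tfrac{\sin(uX)}{uX}\bigr]$ together with the elementary fact that $1 - \tfrac{\sin v}{v} \geq 1-\sin 1 > 0$ whenever $|v| \geq 1$, which gives $\E\bigl[1 - \tfrac{\sin(uX)}{uX}\bigr] \geq (1-\sin 1)\,\P(|X| \geq 1/u)$; one then takes $u = 1/a$. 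Applying this with $a = n^{-(\beta+\varepsilon)}$, inserting the bound on $1-\varphi_j$, exchanging the two integrals (Tonelli) and using $\int_0^{1/a} \theta^{\alpha(s)}\, d\theta = \frac{(1/a)^{\alpha(s)+1}}{\alpha(s)+1} \leq \tfrac12 (1/a)^{\alpha(s)+1}$ (the coefficient is bounded since $\alpha(s)\in(1,2)$) yields
$$P_n^j \leq \frac{1}{2(1-\sin 1)} \int_{t_j}^{t_{j+1}} (1/a)^{\alpha(s)}\, ds = \frac{1}{2(1-\sin 1)} \int_{t_j}^{t_{j+1}} n^{(\beta+\varepsilon)\alpha(s)}\, ds .$$

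The final step uses the $\mathcal{C}^1$-regularity of $\alpha$. On $[t_j, t_{j+1}]$ one has $|\alpha(s)-\alpha(t_j)| \leq \|\alpha'\|_\infty/n$, so
$$n^{(\beta+\varepsilon)(\alpha(s)-\alpha(t_j))} = \exp\bigl((\beta+\varepsilon)(\alpha(s)-\alpha(t_j))\log n\bigr) \longrightarrow 1$$
as $n\to\infty$, uniformly in $j$ and in $s\in[t_j,t_{j+1}]$; note that the sign of $\beta+\varepsilon$ is irrelevant here, since $\bigl|(\beta+\varepsilon)(\alpha(s)-\alpha(t_j))\bigr|\log n \leq |\beta+\varepsilon|\,\|\alpha'\|_\infty\,\tfrac{\log n}{n}$. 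Hence there is $n_0$ such that this quantity is at most $2$ for all $n\geq n_0$; writing $n^{(\beta+\varepsilon)\alpha(s)} = n^{(\beta+\varepsilon)\alpha(t_j)}\, n^{(\beta+\varepsilon)(\alpha(s)-\alpha(t_j))}$ and integrating over an interval of length $1/n$ gives $P_n^j \leq (1-\sin 1)^{-1}\, n^{\alpha(t_j)\beta + \alpha(t_j)\varepsilon - 1}$, which is the assertion with $K = (1-\sin 1)^{-1}$.

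I do not expect a genuine obstacle: the proof just combines the explicit characteristic function of an increment, a textbook tail estimate, and a first-order Taylor control of $\alpha$. The only points requiring a little attention are keeping every constant (in particular $(\alpha(s)+1)^{-1}$ and the Lipschitz constant of $\alpha$) uniform in $s$ and $j$, which is automatic since $\alpha$ is $\mathcal{C}^1$ on the compact $[0,1]$ with values in $(1,2)$, and avoiding any case distinction on the sign of $\beta+\varepsilon$, as indicated above. Note also that the hypothesis $\beta<\tfrac1d$ is not used in the inequality itself; it only guarantees that, for $\varepsilon$ small, the exponent $\alpha(t_j)(\beta+\varepsilon)-1$ is negative, so that the estimate is informative later on.
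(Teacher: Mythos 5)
Your proof is correct and follows essentially the same route as the paper: bound $P_n^j$ by the one-sided tail, control that tail by the integrated characteristic function (the paper simply cites Lo\`eve's truncation inequality with constant $7$ where you re-derive a symmetric-case version with constant $(1-\sin 1)^{-1}$), then use $1-e^{-x}\leq x$ and the $\mathcal{C}^1$ regularity of $\alpha$ to replace $\alpha(s)$ by $\alpha(t_j)$ up to a bounded factor over the interval of length $1/n$. Your side remarks (irrelevance of the sign of $\beta+\varepsilon$, and that $\beta<\tfrac1d$ is not needed for the inequality itself) are also consistent with the paper, which reuses this bound for $\beta\leq 0$.
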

 \begin{lem}\label{LemE1min}
   Assume $\beta < \frac{1}{d_U}$. Then, $\exists K>0$ , $\exists n_0 \in \bbbn$ such that $\forall n \geq n_0$, $\forall j \in J_n(U)$, 
   $$P_n^j \geq K n^{\alpha(t_j) \beta - \alpha(t_j) \varepsilon -1}.$$
   
  \end{lem}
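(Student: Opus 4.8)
The plan is to read off the exact law of the increment $\Xi_j:=B(\tfrac{j+1}{n})-B(\tfrac{j}{n})$ from the characteristic function (\ref{IICF}) and to estimate $P_n^j=\P\big(n^{-(\beta+\varepsilon)}\le|\Xi_j|\le n^{-(\beta-\varepsilon)}\big)$ — which is the parameter of the Bernoulli variable $X_j$ — by comparison with symmetric stable laws. Taking $m=2$, $\theta_1=-\theta$, $\theta_2=\theta$, $t_1=t_j=\tfrac{j}{n}$, $t_2=t_{j+1}$ in (\ref{IICF}) gives $\E[e^{i\theta\Xi_j}]=\exp\big(-\int_{t_j}^{t_{j+1}}|\theta|^{\alpha(s)}\,ds\big)$. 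Hence $\Xi_j$ is symmetric and infinitely divisible with no Gaussian part, and writing $|\theta|^{\alpha}=\kappa_\alpha^{-1}\int_{\bbbr}(1-\cos\theta x)|x|^{-1-\alpha}\,dx$ for a suitable constant $\kappa_\alpha>0$ continuous on $(1,2)$, its L\'evy measure is $\nu_j(dx)=\big(\int_{t_j}^{t_{j+1}}\kappa_{\alpha(s)}^{-1}\,|x|^{-1-\alpha(s)}\,ds\big)\,dx$.

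Next I would obtain two-sided tail bounds for $\Xi_j$. Write $\bar\nu_j(x):=\nu_j(\{|y|>x\})$ and $\sigma_j(x)^2:=\int_{|y|\le x}y^2\,\nu_j(dy)$. Standard estimates for infinitely divisible laws — a single large jump for the lower bound, a large jump plus Chebyshev applied to the truncated remainder for the upper bound — give, whenever $\bar\nu_j(x)$ is small,
\[
c'\,\bar\nu_j(2x)\ \le\ \P(|\Xi_j|>x)\ \le\ C'\Big(\bar\nu_j(x/2)+\tfrac{\sigma_j(x/2)^2}{x^2}\Big).
\]
Since $\alpha\in\mathcal C^1([0,1])$ one has $|\alpha(s)-\alpha(t_j)|\le L/n$ on $[t_j,t_{j+1}]$ with $L=\|\alpha'\|_\infty$, and for $x$ of order $n^{-(\beta\pm\varepsilon)}$ one has $|\log x|=O(\log n)$; hence $x^{-\alpha(s)}=x^{-\alpha(t_j)}(1+o(1))$ and $\kappa_{\alpha(s)}=\kappa_{\alpha(t_j)}(1+o(1))$, uniformly in $s\in[t_j,t_{j+1}]$ and in $j\in J_n(U)$. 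As $[t_j,t_{j+1}]$ has length $1/n$, a direct computation then yields $\bar\nu_j(x)\asymp n^{-1}x^{-\alpha(t_j)}$ and $\sigma_j(x/2)^2/x^2$ of the same order, so that $\P(|\Xi_j|>x)\asymp n^{-1}x^{-\alpha(t_j)}$ with constants uniform in $j\in J_n(U)$ (they stay positive and finite because $\alpha(t_j)\in[c,d]\subset(1,2)$).

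To conclude, choose $\varepsilon<\tfrac1{d_U}-\beta$, so that $\beta+\varepsilon<\tfrac1{d_U}\le\tfrac1{\alpha(t_j)}$ for all $j\in J_n(U)$; then $n^{-(\beta+\varepsilon)}\to0$ and $\bar\nu_j(n^{-(\beta+\varepsilon)})\to0$, and the estimates above apply at both scales $n^{-(\beta\pm\varepsilon)}$. Writing $P_n^j=\P(|\Xi_j|\ge n^{-(\beta+\varepsilon)})-\P(|\Xi_j|>n^{-(\beta-\varepsilon)})$ and inserting the tail estimate gives, for $n\ge n_0$ and $j\in J_n(U)$,
\[
P_n^j\ \ge\ c_1\,n^{\alpha(t_j)\beta+\alpha(t_j)\varepsilon-1}-c_2\,n^{\alpha(t_j)\beta-\alpha(t_j)\varepsilon-1}\ =\ n^{\alpha(t_j)\beta-1}\big(c_1\,n^{\alpha(t_j)\varepsilon}-c_2\,n^{-\alpha(t_j)\varepsilon}\big),
\]
with $c_1,c_2>0$ independent of $j$. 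For $n$ large the bracket exceeds $\tfrac12 c_1 n^{\alpha(t_j)\varepsilon}$ (uniformly, since $\alpha(t_j)\varepsilon\ge c\varepsilon>0$), and since $n^{\alpha(t_j)\beta+\alpha(t_j)\varepsilon-1}\ge n^{\alpha(t_j)\beta-\alpha(t_j)\varepsilon-1}$ we obtain $P_n^j\ge K\,n^{\alpha(t_j)\beta-\alpha(t_j)\varepsilon-1}$ with $K=c_1/2$.

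The main obstacle is the middle step: turning the exact but non-stable characteristic function into honest two-sided tail bounds at thresholds $x$ for which $x\,n^{1/\alpha(t_j)}\to\infty$ — something mere convergence in distribution of $n^{1/\alpha(t_j)}\Xi_j$ to an $\alpha(t_j)$-stable law does not supply. The L\'evy-measure description together with the uniform smallness $|\alpha(s)-\alpha(t_j)|\le L/n$ is precisely what makes the tail asymptotics hold uniformly in $j$; the remainder is bookkeeping. The same computation, used from above and with $\tfrac1{d}$, $c$, $d$ replacing $\tfrac1{d_U}$, $c_U$, $d_U$, proves the companion upper bound of Lemma \ref{LemE1maj}.
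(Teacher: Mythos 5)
Your argument is correct, but it takes a genuinely different route from the paper. The paper minorizes the indicator of the annulus $\{n^{-\beta-\varepsilon}\le|x|\le n^{-\beta+\varepsilon}\}$ by translates of a smooth even bump $\varphi$, applies Parseval against the explicit characteristic function $e^{-\int_{t_j}^{t_{j+1}}|\xi|^{\alpha(x)}dx}$, and splits the resulting integral into three pieces $I_{n,1}^j+I_{n,2}^j+I_{n,3}^j$; the first two are shown to vanish uniformly in $j\in J_n(U)$, and the main term is bounded below via the positivity of $F(\delta)=-\int_0^\infty\hat\varphi(\eta)\cos(\eta)\eta^\delta\,d\eta$, which is exactly what the appendix (Schelling's lemma) is there for. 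You instead read off from \eqref{IICF} that the increment is symmetric infinitely divisible with Lévy density $\int_{t_j}^{t_{j+1}}\kappa_{\alpha(s)}^{-1}|x|^{-1-\alpha(s)}ds$, and use the standard one-big-jump lower bound and truncation/Chebyshev upper bound to get $\P(|\Xi_j|>x)\asymp n^{-1}x^{-\alpha(t_j)}$ uniformly in $j\in J_n(U)$ at the two thresholds, then take the difference of tails. This is sound: the $C^1$ bound $|\alpha(s)-\alpha(t_j)|\le L/n$ and $|\log x|=O(\log n)$ do make $\bar\nu_j(x)\asymp n^{-1}x^{-\alpha(t_j)}$ and $\sigma_j(x)^2/x^2\asymp n^{-1}x^{-\alpha(t_j)}$ uniform, and both smallness conditions needed for the two-sided ID tail estimates (small tail mass \emph{and} small truncated variance relative to $x^2$) are verified. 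Your route avoids the test-function/Fourier machinery and the appendix positivity lemma entirely, and in fact delivers the stronger bound $P_n^j\ge K n^{\alpha(t_j)\beta+\alpha(t_j)\varepsilon-1}$, which implies the stated one. The one caveat is that you impose $\varepsilon<\frac1{d_U}-\beta$, a restriction absent from the statement (the paper's proof only needs $\beta-\varepsilon<\frac1{d_U}$); this is harmless in practice since everywhere the lemma is invoked (e.g.\ Lemma \ref{LemEncAcc2}, where $\varepsilon<\min(\beta,\frac1d-\beta)$) your condition holds, and for larger $\varepsilon$ the inner threshold sits beyond the bulk scale $n^{-1/\alpha(t_j)}$ so the bound is even easier — but you should either note this or remove the restriction if you want the lemma verbatim.
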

  
    \begin{lem}\label{LemE2maj}
   Assume $\beta > \frac{1}{c}$ and $\varepsilon \in (0, \beta - \frac{1}{c})$. Then, $\exists K>0$ , $\exists n_0 \in \bbbn$ such that $\forall n \geq n_0$, $\forall j \in \llbracket 1,n \rrbracket$, 
   $$P_n^j \leq K n^{\frac{1}{\alpha(t_j)} + \varepsilon -\beta }.$$
   
  \end{lem}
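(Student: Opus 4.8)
The plan is to estimate the probability $P_n^j = \P\left(\frac{1}{n^{\beta+\varepsilon}} \leq |Y(\frac{j+1}{n}) - Y(\frac{j}{n})| \leq \frac{1}{n^{\beta-\varepsilon}}\right)$ from above when $\beta > \frac1c$. Since the relevant event requires the increment to be \emph{small} (namely $\leq n^{-(\beta-\varepsilon)}$, a quantity tending to $0$), the dominant contribution must come from the range of $\beta$ where the increment can be that small; and because $B$ is tangent to an $\alpha(t)$-stable L\'evy motion, increments over an interval of length $1/n$ around $t_j$ behave like $n^{-1/\alpha(t_j)}$ times a standard $\alpha(t_j)$-stable variable. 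So I would first reduce to controlling $\P(|B(\frac{j+1}{n}) - B(\frac{j}{n})| \leq n^{-(\beta-\varepsilon)})$ --- the upper tail constraint $\geq n^{-(\beta+\varepsilon)}$ can simply be dropped since we only want an upper bound on $P_n^j$. Note also that $P_n^j$ is written in terms of $Y$ but the claim involves $B$; by the representations and the fact that the $A$-part (respectively the drift/correction parts) contribute regularity $\geq 1 > 1/\alpha$, or more directly by working with $B$ itself via its characteristic function \eqref{IICF}, these are interchangeable for the purpose of this estimate.

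Next I would use the explicit characteristic function \eqref{IICF}: the increment $B(t_{j+1}) - B(t_j)$ has characteristic function $\exp\left(-\int \left|\theta\, 1_{(t_j, t_{j+1}]}(s)\right|^{\alpha(s)} ds\right) = \exp\left(-\int_{t_j}^{t_{j+1}} |\theta|^{\alpha(s)} ds\right)$. After rescaling $\theta \mapsto n^{1/\alpha(t_j)} \theta$ and using that $\alpha$ is $C^1$ (so $\alpha(s) = \alpha(t_j) + O(1/n)$ on $[t_j, t_{j+1}]$), this characteristic function is comparable, uniformly in $j$ for $n$ large, to that of a standard $\alpha(t_j)$-stable variable: $\exp(-c_n |\theta|^{\alpha(t_j)})$ with $c_n$ bounded above and below. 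The point is then a standard small-deviation estimate for stable laws: if $Z$ is $\alpha$-stable with a bounded density $p_Z$, then $\P(|Z| \leq \rho) \leq \|p_Z\|_\infty \cdot 2\rho$ for all $\rho > 0$, and $\|p_Z\|_\infty$ is bounded uniformly for $\alpha$ ranging in a compact subset of $(1,2)$. Applying this with $\rho = n^{1/\alpha(t_j)} \cdot n^{-(\beta-\varepsilon)} = n^{1/\alpha(t_j) - \beta + \varepsilon}$ (which is $\leq 1$ for $n$ large precisely because $\beta - \varepsilon > \frac1c \geq \frac{1}{\alpha(t_j)}$, hence the exponent is negative --- this is where the hypothesis $\varepsilon \in (0, \beta - \frac1c)$ is used) gives
$$P_n^j \leq \P\left(|B(t_{j+1}) - B(t_j)| \leq n^{-(\beta-\varepsilon)}\right) \leq K\, n^{\frac{1}{\alpha(t_j)} - \beta + \varepsilon},$$
which is exactly the claimed bound.

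The main obstacle is justifying the comparison between the increment of $B$ and a genuinely stable variable \emph{with uniform constants in $j$}, and converting a characteristic-function bound into a density (small-ball) bound. For the first part the $C^1$ regularity of $\alpha$ is essential: on $[t_j, t_{j+1}]$ one has $|\alpha(s) - \alpha(t_j)| \leq \|\alpha'\|_\infty / n$, so $\int_{t_j}^{t_{j+1}} |\theta|^{\alpha(s)} ds$ differs from $\frac1n |\theta|^{\alpha(t_j)}$ by a controlled multiplicative factor on $|\theta| \leq n^{1/\alpha(t_j)}$, while for $|\theta|$ larger the characteristic function is so small it is harmless --- this needs a careful splitting of the Fourier inversion integral. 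For the density bound one uses that the characteristic function of the rescaled increment is integrable (since $\alpha(t_j) > c > 1$), bounded by an integrable majorant uniformly in $j$ and $n$, so the density exists, is uniformly bounded, and Fourier inversion gives $\sup_x p(x) \leq \frac{1}{2\pi}\int |\hat p(\theta)|\, d\theta \leq K$; then $\P(\text{increment} \in [-\rho,\rho]) \leq 2K\rho$. Everything else is bookkeeping with the exponents. An alternative, perhaps cleaner, route is to invoke directly a known uniform small-ball estimate for the localisable process $B$ (of the type used in \cite{LGLVL} or \cite{LL2}) rather than re-deriving it, but the characteristic-function argument is self-contained given \eqref{IICF}.
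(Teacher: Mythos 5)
Your argument is correct, and it takes a genuinely different (and for this one-sided bound, lighter) route than the paper. The paper does not separate the upper bound from the lower one: it proves Lemmas \ref{LemE2maj} and \ref{LemE2min} together by applying Parseval's formula to the exact annulus event, writing $P_n^j$ as $\frac{1}{\pi}\int_{\bbbr}\frac{\sin(\xi n^{-\beta+\varepsilon})-\sin(\xi n^{-\beta-\varepsilon})}{\xi}\,e^{-\int_{t_j}^{t_{j+1}}|\xi|^{\alpha(x)}dx}d\xi$, then rescaling $\xi=n^{1/\alpha(t_j)}v$ and showing $P_n^j=n^{\mu_j}I_n^j(1+L_n^j)$ with $\mu_j=\frac{1}{\alpha(t_j)}+\varepsilon-\beta$, where $I_n^j$ is pinched between positive constants and $\sup_j|L_n^j|\to 0$; this yields matching upper and lower estimates in one computation (and it is there, in controlling $n^{2\mu_j}$ inside $L_n^j$, that the hypothesis $\varepsilon<\beta-\frac1c$ is really used). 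You instead discard the lower constraint $|{\Delta_j}|\geq n^{-\beta-\varepsilon}$ and reduce to a small-ball estimate: the rescaled increment $n^{1/\alpha(t_j)}(B(t_{j+1})-B(t_j))$ has, by \eqref{IICF} and the $\mathcal{C}^1$ regularity of $\alpha$, a characteristic function dominated uniformly in $j$ and large $n$ by an integrable majorant of the form $e^{-\frac12\min(|\theta|^c,|\theta|^d)}$, hence a uniformly bounded density, and $\P(|\Delta_j|\leq n^{-\beta+\varepsilon})\leq K\,n^{1/\alpha(t_j)+\varepsilon-\beta}$ follows; your exponent bookkeeping is right, and no delicate splitting of the inversion integral is in fact needed once one has that uniform majorant. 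What your approach buys is a shorter, more standard proof of the upper bound (it does not even really need $\varepsilon<\beta-\frac1c$); what it gives up is the lower bound of Lemma \ref{LemE2min}, for which the paper's two-sided Parseval asymptotics (or a separate uniform lower bound on the density near the origin) would still be required. Your remark on the $Y$-versus-$B$ discrepancy in the definition of $P_n^j$ is a notational issue of the paper itself, and working directly with $B$ through \eqref{IICF}, as you do, is the right reading.
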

 \begin{lem}\label{LemE2min}
   Assume $\beta > \frac{1}{c_U}$ and $\varepsilon \in (0, \beta - \frac{1}{c_U})$. Then, $\exists K>0$ , $\exists n_0 \in \bbbn$ such that $\forall n \geq n_0$, $\forall j \in J_n(U)$, 
   $$P_n^j \geq K n^{\frac{1}{\alpha(t_j)} + \varepsilon -\beta }.$$
   
  \end{lem}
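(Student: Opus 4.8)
The plan is to control, \emph{uniformly in $j\in J_n(U)$ and in $n$}, the density near the origin of a rescaling of the increment $B(t_{j+1})-B(t_j)$, and then to integrate the lower bound so obtained over the (shrinking) target interval. I would start from \eqref{IICF}: the increment $\Delta_j:=B(t_{j+1})-B(t_j)$ has characteristic function $\theta\mapsto\exp\!\big(-\int_{t_j}^{t_{j+1}}|\theta|^{\alpha(s)}\,ds\big)$, so that $\widetilde\Delta_j:=n^{1/\alpha(t_j)}\Delta_j$ has characteristic function $e^{-\psi_{n,j}(\cdot)}$ with
\[ \psi_{n,j}(\theta)=\int_{t_j}^{t_{j+1}}n^{\alpha(s)/\alpha(t_j)}\,|\theta|^{\alpha(s)}\,ds . \]
In these terms $P_n^j=\P\big(a_n\le|\widetilde\Delta_j|\le b_n\big)$ with $a_n:=n^{1/\alpha(t_j)-\beta-\varepsilon}$ and $b_n:=n^{1/\alpha(t_j)-\beta+\varepsilon}$; since $j\in J_n(U)$ forces $c_U\le\alpha(t_j)\le d_U$ and $\varepsilon<\beta-1/c_U$, both $a_n$ and $b_n$ tend to $0$ and $a_n/b_n=n^{-2\varepsilon}\to0$.

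The core of the argument is a uniform estimate on $\psi_{n,j}$. Using that $\alpha$ is $\mathcal{C}^1$ on $[0,1]$ and $|t_{j+1}-t_j|=1/n$, for $s\in[t_j,t_{j+1}]$ one has $|\alpha(s)-\alpha(t_j)|\le\|\alpha'\|_\infty/n$, hence $n^{\alpha(s)/\alpha(t_j)}=n\cdot n^{(\alpha(s)-\alpha(t_j))/\alpha(t_j)}\in[n/2,2n]$ for all $n\ge n_0$, with $n_0$ depending only on $\|\alpha'\|_\infty$ and $c_U$. Splitting at $|\theta|=1$ and using $|\theta|^{\alpha(s)}\le|\theta|^{c_U}$ for $|\theta|\le1$ and $|\theta|^{\alpha(s)}\ge|\theta|^{c_U}$ for $|\theta|\ge1$, I would deduce $\psi_{n,j}(\theta)\le2|\theta|^{c_U}$ for $|\theta|\le1$ and $\psi_{n,j}(\theta)\ge\tfrac12|\theta|^{c_U}$ for $|\theta|\ge1$, uniformly in $j\in J_n(U)$ and $n\ge n_0$. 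Since $e^{-\psi_{n,j}}$ is real, even and integrable, $\widetilde\Delta_j$ has the density $p_{n,j}(x)=\tfrac1\pi\int_0^{\infty}\cos(\theta x)\,e^{-\psi_{n,j}(\theta)}\,d\theta$, and these bounds give three uniform facts: $\|p_{n,j}\|_\infty\le M_0$ (from the lower bound on $\psi_{n,j}$ for large $|\theta|$); $p_{n,j}(0)=\tfrac1\pi\int_0^{\infty}e^{-\psi_{n,j}(\theta)}\,d\theta\ge\tfrac1\pi\int_0^1 e^{-2\theta^{c_U}}\,d\theta=:\delta_0>0$ (from the upper bound on $\psi_{n,j}$ for small $|\theta|$); and, via $|\cos u-1|\le|u|$, a uniform Lipschitz bound $|p_{n,j}(x)-p_{n,j}(0)|\le C_0|x|$. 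Hence $p_{n,j}(x)\ge\delta_0/2$ whenever $|x|\le\delta_0/(2C_0)$, with $M_0,\delta_0,C_0$ depending only on $c_U$.

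To conclude, for $n$ large enough (uniformly in $j\in J_n(U)$) one has $b_n\le n^{1/c_U-\beta+\varepsilon}\le\delta_0/(2C_0)$ and $a_n\le b_n/2$, so that
\[ P_n^j=\int_{a_n\le|x|\le b_n}p_{n,j}(x)\,dx\ \ge\ 2(b_n-a_n)\inf_{|x|\le b_n}p_{n,j}(x)\ \ge\ \frac{\delta_0}{2}\,b_n\ =\ \frac{\delta_0}{2}\,n^{\frac1{\alpha(t_j)}+\varepsilon-\beta}, \]
which is the claim with $K=\delta_0/2$. The main obstacle is the uniformity in $j$: I expect most of the care to go into checking that every constant and every ``$n$ large enough'' threshold above depends only on $c_U$, $d_U$ and $\|\alpha'\|_\infty$, not on the particular subinterval $[t_j,t_{j+1}]$ — which is precisely why one works with crude $\mathcal{C}^1$ bounds on an interval of length $1/n$ instead of passing to the pointwise stable limit; the splitting of the Fourier integrals at $|\theta|=1$ is a minor secondary technical point.
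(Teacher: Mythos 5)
Your proof is correct and follows essentially the same route as the paper: both rescale the increment by $n^{1/\alpha(t_j)}$, use the explicit characteristic function \eqref{IICF}, and Fourier-invert with uniform-in-$j$ control of the exponent coming from the $\mathcal{C}^1$ regularity of $\alpha$ — the paper merely organizes the inversion via a Parseval/sinc-kernel identity $P_n^j = n^{\mu_j} I_n^j (1+L_n^j)$, where your $p_{n,j}(0)$ is $I_n^j/\pi$ and your Lipschitz estimate at $0$ plays exactly the role of the error term $L_n^j$. The only detail to tidy is that $t_{j+1}$ may fall just outside $U$, so your pointwise bounds on $\psi_{n,j}$ should be stated with the global $c$ (or with $c_U - \|\alpha'\|_{\infty}/n$), which affects nothing but the constants.
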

  
   \subsubsection{Proofs}
  
    \begin{proof}[Proof of Lemma \ref{LemE1maj}]
     Set $\mu_j = \alpha(t_j) \beta + \alpha(t_j) \varepsilon -1$. Using the truncation
     inequality \cite[Section 13, p.\ 209]{Loeve}, one computes
     
     \begin{eqnarray*}
      P_n^j & = & \P \left( \frac{1}{n^{\beta + \varepsilon}} \leq | Y(t_{j+1}) - Y(t_j) | \leq \frac{1}{n^{\beta - \varepsilon}} \right)\\
      & \leq & \P \left(  | Y(t_{j+1}) - Y(t_j) | \geq  \frac{1}{n^{\beta + \varepsilon}} \right)\\
      & \leq & \frac{7}{n^{\beta+\varepsilon}} \int_{0}^{n^{\beta+\varepsilon}}\limits \left( 1- e^{-\int_{t_j}^{t_{j+1}} |\xi|^{\alpha(x)} dx} \right) d\xi\\
      & = & 7 \int_{0}^{1}\limits \left( 1- e^{-\int_{t_j}^{t_{j+1}} n^{(\beta+\varepsilon)\alpha(x)} |v|^{\alpha(x)} dx }\right) dv\\
      & \leq & 7  \int_{0}^{1}\limits  \int_{t_j}^{t_{j+1}}\limits n^{(\beta+\varepsilon)\alpha(x)} |v|^{\alpha(x)} dx dv\\
      & \leq & 7  \int_{t_j}^{t_{j+1}}\limits n^{(\beta+\varepsilon)\alpha(x)}  dx\\
      & = & 7 n^{\mu_j} \int_{t_j}^{t_{j+1}}\limits n^{1+(\beta+\varepsilon)(\alpha(x)-\alpha(t_j) )}  dx.
     \end{eqnarray*}
Since $\alpha$ is $C^1$, there exists a constant $K$ such that, for all $x \in (t_j,t_{j+1})$,
$$|\beta+\varepsilon||\alpha(x)-\alpha(t_j)| \leq K |x-t_j| \leq \frac{K}{n}.$$ 
As a consequence,
\begin{eqnarray*}
  P_n^j & \leq & 7 n^{\mu_j} \int_{t_j}^{t_{j+1}}\limits n^{1+\frac{K}{n}}  dx \\
  & = & 7 n^{\mu_j} n^{\frac{K}{n}} \\
  & \leq & K_1 n^{\mu_j}
 \end{eqnarray*}
for a constant $K_1$.

\end{proof}

\begin{proof}[Proof of Lemma \ref{LemE1min}]
 Set $\mu = \frac{1}{2} \left( \frac{1}{n^{\beta + \varepsilon}} + \frac{1}{n^{\beta - \varepsilon}} \right)$ 
 and $\sigma = \frac{1}{2} \left( \frac{1}{n^{\beta - \varepsilon}} - \frac{1}{n^{\beta + \varepsilon}}\right)$. 

 Choose a function $\varphi$ that satisfies the following properties:
 
 \begin{enumerate}
  \item $\textrm{supp}(\varphi) \subset [-1,1]$.
  
  \item $\varphi$ is even.
  
  \item $\varphi$ is $\mathcal{C}^4$.
  
  \item $\forall x \in \bbbr$, $0 \leq \varphi(x) \leq 1$.
  
  \item $\varphi$ is not identically $0$. 
 \end{enumerate}

 These properties imply in particular that $\hat{\varphi}$ is real and even. In addition, for all $x \in \bbbr$:
 
 $$\varphi \left( - \frac{x+\mu}{\sigma} \right) + \varphi \left( \frac{x-\mu}{\sigma}\right) \leq \mathbf{1}_{[-n^{-\beta+\varepsilon} , -n^{-\beta - \varepsilon}]}(x) + \mathbf{1}_{[n^{-\beta-\varepsilon}, n^{-\beta+\varepsilon} ]}(x).$$
 
 Since the Fourier transform of $\varphi \left( \frac{x-\mu}{\sigma}\right)$ is 
 $\sigma \exp(- i \mu \xi) \hat{\varphi}(\sigma \xi)$, Parseval formula yields
 
 \begin{eqnarray*}
  P_n^j & \geq & \int_{\bbbr}\limits  \left(\varphi ( - \frac{x+\mu}{\sigma} ) + \varphi ( \frac{x-\mu}{\sigma}) \right) \P( Y(t_{j+1}) - Y(t_j) \in dx )\\
  & \geq & \frac{1}{\pi} \sigma \int_{\bbbr}\limits \cos(\mu \xi ) \hat{\varphi}(\sigma \xi) e^{-\int_{t_j}^{t_{j+1}} |\xi|^{\alpha(x)} dx} d\xi\\
  & = & \frac{2}{\pi} \int_{0}^{+\infty}\limits \cos(\frac{\mu}{\sigma} \eta ) \hat{\varphi}(\eta) e^{-\int_{t_j}^{t_{j+1}} |\frac{\eta}{\sigma}|^{\alpha(x)} dx} d\eta.
 \end{eqnarray*}

% REMPLACER de LA : 
% 
%  The integral of $\cos(\eta) \hat{\varphi}(\eta)$ is zero, since it is the value of the Fourier transform 
%  of $\hat{\varphi}$ at point $1$, that is, $\varphi(1)$, which is zero by assumption on $\varphi$. One may
%  thus write
% 
% \begin{eqnarray*}
%   P_n^j & \geq &\frac{2}{\pi} \int_{0}^{+\infty}\limits \left( \cos(\frac{\mu}{\sigma} \eta ) \hat{\varphi}(\eta) e^{-\int_{t_j}^{t_{j+1}} |\frac{\eta}{\sigma}|^{\alpha(x)} dx} - \cos(\eta) \hat{\varphi}(\eta) \right) d\eta\\
%   & = & \frac{2}{\pi} \int_{0}^{+\infty}\limits \cos(\frac{\mu}{\sigma} \eta ) \hat{\varphi}(\eta) (e^{-\int_{t_j}^{t_{j+1}} |\frac{\eta}{\sigma}|^{\alpha(x)} dx} - 1) d\eta + \frac{2}{\pi} \int_{0}^{+\infty}\limits \hat{\varphi}(\eta) (\cos(\frac{\mu}{\sigma} \eta ) - \cos(\eta) ) d\eta\\
%  & = & \frac{2}{\pi} \int_{0}^{+\infty}\limits \cos(\frac{\mu}{\sigma} \eta ) \hat{\varphi}(\eta) (e^{-\int_{t_j}^{t_{j+1}} |\frac{\eta}{\sigma}|^{\alpha(x)} dx} - 1) d\eta + \frac{1}{\pi} (\varphi(\frac{\mu}{\sigma}) - \varphi(1) )\\
%  & \geq & \frac{2}{\pi} \int_{0}^{+\infty}\limits \cos(\frac{\mu}{\sigma} \eta ) \hat{\varphi}(\eta) (e^{-\int_{t_j}^{t_{j+1}} |\frac{\eta}{\sigma}|^{\alpha(x)} dx} - 1) d\eta\\
%  & = & \frac{2}{\pi} n^{\alpha(t_j) \beta - \alpha(t_j) \varepsilon - 1} I_n^j,
%  \end{eqnarray*} 
% 
% A LA, PAR

Now, 
$$\int_{\bbbr}\limits \cos(\frac{\mu}{\sigma} \eta )\hat{\varphi}(\eta )d\eta = \varphi( \frac{\mu}{\sigma})=0$$
since $\mu>\sigma$. One may thus write:
\begin{eqnarray*}
  P_n^j & \geq & \frac{2}{\pi} \int_{0}^{+\infty}\limits \cos(\frac{\mu}{\sigma} \eta ) \hat{\varphi}(\eta) (e^{-\int_{t_j}^{t_{j+1}} |\frac{\eta}{\sigma}|^{\alpha(x)} dx} - 1) d\eta\\
 & =: & \frac{2}{\pi} n^{\alpha(t_j) \beta - \alpha(t_j) \varepsilon - 1} I_n^j,
 \end{eqnarray*} 
  
  where $I_n^j = I_{n,1}^j + I_{n,2}^j + I_{n,3}^j$ with 
  \begin{eqnarray*}
   I_{n,1}^j & = & \int_{0}^{+\infty}\limits \hat{\varphi}(\eta) \left(\cos(\frac{\mu}{\sigma} \eta ) - \cos(\eta) \right)n^{1-\alpha(t_j) \beta + \varepsilon \alpha(t_j)} \left(e^{-\int_{t_j}^{t_{j+1}} |\frac{\eta}{\sigma}|^{\alpha(x)} dx} - 1 \right) d\eta,\\
   I_{n,2}^j & = & \int_{0}^{+\infty}\limits \hat{\varphi}(\eta) \cos(\eta) n^{1-\alpha(t_j) \beta + \varepsilon \alpha(t_j)} \left(e^{-\int_{t_j}^{t_{j+1}} |\frac{\eta}{\sigma}|^{\alpha(x)} dx} - 1 + \int_{t_j}^{t_{j+1}} |\frac{\eta}{\sigma}|^{\alpha(x)} dx\right) d\eta,\\
   I_{n,3}^j & = & - \int_{0}^{+\infty}\limits \hat{\varphi}(\eta) \cos(\eta) n^{1-\alpha(t_j) \beta + \varepsilon \alpha(t_j)} \int_{t_j}^{t_{j+1}} |\frac{\eta}{\sigma}|^{\alpha(x)} dx d\eta.
  \end{eqnarray*}

  Let us show that there exists $K_U >0$ such that, for all $j \in J_n(U)$, 
  $$n^{1-\alpha(t_j) \beta + \varepsilon \alpha(t_j)} \int_{t_j}^{t_{j+1}} |\frac{\eta}{\sigma}|^{\alpha(x)} dx \leq K_U (\eta^c + \eta^d).$$
  Since $\frac{1}{\sigma} =  \frac{2 n^{\beta + \varepsilon}}{n^{2\varepsilon}-1}$ and, there exits $K$ such that, for all 
  $x \in (t_j,t_{j+1})$, $|\alpha(x) - \alpha(t_j)| \leq \frac{K}{n}$, one has 
  $|\sigma^{\alpha(t_j) - \alpha(x)}| \leq K$ and
  
  \begin{eqnarray*}
   n^{1-\alpha(t_j) \beta + \varepsilon \alpha(t_j)} \int_{t_j}^{t_{j+1}} |\frac{\eta}{\sigma}|^{\alpha(x)} dx & = & \frac{n^{-\alpha(t_j) \beta + \varepsilon \alpha(t_j)}}{\sigma^{\alpha(t_j)}} n \int_{t_j}^{t_{j+1}} |\eta|^{\alpha(x)} \sigma^{\alpha(t_j) - \alpha(x)} dx\\
   & \leq & 2^{\alpha(t_j)} \frac{n^{2 \varepsilon \alpha(t_j)}}{(n^{2 \varepsilon}-1)^{\alpha(t_j)}} (\eta^c + \eta^d) n \int_{t_j}^{t_{j+1}} \sigma^{\alpha(t_j) - \alpha(x)} dx\\
   & \leq & K (\eta^c + \eta^d).
  \end{eqnarray*}
As a consequence, 
\begin{eqnarray*}
 |I_{n,1}^j| & \leq & \int_{0}^{+\infty}\limits |\hat{\varphi}(\eta)| |\frac{\mu}{\sigma}-1|  n^{1-\alpha(t_j) \beta + \varepsilon \alpha(t_j)} \int_{t_j}^{t_{j+1}}  |\frac{\eta}{\sigma}|^{\alpha(x)} dx d\eta\\
 & \leq & K |\frac{\mu}{\sigma}-1| \int_{0}^{+\infty}\limits |\hat{\varphi}(\eta)| (\eta^c + \eta^d) d\eta.
\end{eqnarray*}

One finally obtains that $\sup_{j \in J_n(U)}\limits |I_{n,1}^j| \leq K | \frac{2}{n^{2 \varepsilon}-1}|$ 
and $\lim_{n \rightarrow +\infty}\limits ( \sup_{j \in J_n(U)}\limits |I_{n,1}^j|) =0.$

Let us now deal with $I_{n,2}^j$.

\begin{eqnarray*}
 |I_{n,2}^j| & \leq & \int_{0}^{+\infty}\limits |\hat{\varphi}(\eta)|n^{1-\alpha(t_j) \beta + \varepsilon \alpha(t_j)} \frac{1}{2} \left( \int_{t_j}^{t_{j+1}}  |\frac{\eta}{\sigma}|^{\alpha(x)} dx \right)^2 d\eta\\
 & \leq & \frac{K_U^2}{2} \int_{0}^{+\infty}\limits |\hat{\varphi}(\eta)|n^{1-\alpha(t_j) \beta + \varepsilon \alpha(t_j)} (\eta^c +\eta^d)^2 n^{2 \alpha(t_j) \beta - 2 - 2 \varepsilon \alpha(t_j)} d\eta\\
  & \leq & K_U n^{\alpha(t_j)(\beta - \varepsilon - \frac{1}{\alpha(t_j)} ) }\\
  & \leq & K_U n^{d_U(\beta - \varepsilon - \frac{1}{d_U }) }
 \end{eqnarray*}
and thus $\lim_{n \rightarrow +\infty}\limits ( \sup_{j \in J_n(U)}\limits |I_{n,2}^j|) =0.$

Fubini's theorem implies that 
\begin{eqnarray*}
 I_{n,3}^j & = & \int_{t_j}^{t_{j+1}} \frac{n^{1-\alpha(t_j) \beta + \varepsilon \alpha(t_j)}}{\sigma^{\alpha(x)}} \left( - \int_{0}^{+\infty }\limits \hat{\varphi}(\eta) \cos(\eta) \eta^{\alpha(x)} d\eta \right) dx\\
 & = &  \int_{t_j}^{t_{j+1}} \frac{n^{1-\alpha(t_j) \beta + \varepsilon \alpha(t_j)}}{\sigma^{\alpha(x)}} F(\alpha(x)) dx
 \end{eqnarray*}
 where $ F(\delta) = - \int_{0}^{+\infty }\limits \hat{\varphi}(\eta) \cos(\eta) \eta^{\delta} d\eta$.
 
The appendix contains a proof that $\min_{\delta \in [c,d]}\limits F(\delta) >0$. As a consequence, 
 
 \begin{eqnarray*}
 I_{n,3}^j & \geq & (\min_{\delta \in [c,d]}\limits F(\delta)) \frac{n^{-\alpha(t_j) \beta + \varepsilon \alpha(t_j)}}{\sigma^{\alpha(t_j)}} n \int_{t_j}^{t_{j+1}} \sigma^{\alpha(t_j)- \alpha(x)} dx\\
 & = & (\min_{\delta \in [c,d]}\limits F(\delta)) \left[ \frac{n^{2 \varepsilon}}{n^{2 \varepsilon}-1} \right]^{\alpha(t_j)} n \int_{t_j}^{t_{j+1}} e^{(\alpha(t_j)- \alpha(x)) \log \sigma} dx\\
 & \geq & K_U \left[ \frac{n^{2 \varepsilon}}{n^{2 \varepsilon}-1} \right]^{c} \left[ n \int_{t_j}^{t_{j+1}} (e^{(\alpha(t_j)- \alpha(x)) \log \sigma} -1)dx + 1\right].
 \end{eqnarray*}
 
Now $\lim_{n \rightarrow +\infty}\limits  \frac{n^{2 \varepsilon}}{n^{2 \varepsilon}-1}=1$ and 
$\lim_{n \rightarrow +\infty}\limits (\sup_{j \in J_n(U)}\limits |  n \int_{t_j}^{t_{j+1}} (e^{(\alpha(t_j)- \alpha(x)) \log \sigma} -1)dx|) =0$. As a consequence,
 
$$\exists K_U >0, \exists n_0 \in \bbbn: \forall n\geq n_0, \inf_{j \in J_n(U)}\limits  I_{n,3}^j \geq K_U >0$$ 

and thus

 $$  \inf_{j \in J_n(U)}\limits  (n^{1-\alpha(t_j) \beta + \varepsilon \alpha(t_j)}P_n^j) >0.$$
 \end{proof}

\begin{proof}[Proofs of Lemma \ref{LemE2maj} and Lemma \ref{LemE2min}]
 Parseval formula yields
 $$
 P_n^j = \frac{1}{\pi} \int_{\bbbr} \frac{\sin(\frac{\xi}{n^{\beta- \varepsilon}})- \sin(\frac{\xi}{n^{\beta + \varepsilon}})}{\xi} e^{- \int_{t_j}^{t_{j+1}} |\xi|^{\alpha(x)}dx} d\xi.$$
 
 Set $\mu_j = \frac{1}{\alpha(t_j)} + \varepsilon - \beta$ 
 with $\varepsilon \in (0, \beta - \frac{1}{c_U})$. Using the change of variable $\xi = n^{1/ \alpha(t_j)} v$, one gets
 
 $$
 P_n^j = \frac{1}{\pi} \int_{\bbbr} \frac{\sin(n^{\mu_j}v)- \sin(n^{\mu_j - 2 \varepsilon}v)}{v} \ e^{- \int_{t_j}^{t_{j+1}}\limits |v|^{\alpha(x)} n^{\frac{\alpha(x)}{\alpha(t_j)}}dx} dv.$$
 
 Define $$I_n^j =   \int_{\bbbr}e^{- \int_{t_j}^{t_{j+1}}\limits |v|^{\alpha(x)} n^{\frac{\alpha(x)}{\alpha(t_j)}}dx} dv.$$

 Since $\alpha$ is Lipschitz and $c\leq \alpha(x) \leq d$, one deduces that 
 $$\exists K^1_U >0, \exists K^2_U >0, \exists n_0 \in \bbbn, \quad  K_U^1 \leq \inf_{j \in J_n(U)}\limits I_n^j \leq \sup_{j \in J_n(U)}\limits I_n^j \leq K_U^2.$$
 
 Now,
 
 \begin{eqnarray*}
  P_n^j & = & n^{\mu_j} I_n^j \left( 1+ \frac{1}{I_n^j} \int_{\bbbr} ( \frac{\sin(n^{\mu_j}v)- \sin(n^{\mu_j - 2 \varepsilon}v)}{n^{\mu_j}v} -1) e^{- \int_{t_j}^{t_{j+1}}\limits |v|^{\alpha(x)} n^{\frac{\alpha(x)}{\alpha(t_j)}}dx} dv \right)\\
   & = & n^{\mu_j} I_n^j (1 + L_n^j).
 \end{eqnarray*}
 
 One computes: 
 
  \begin{eqnarray*}
   | L_n^j| & \leq & \frac{1}{K_U^1} \int_{\bbbr} \left[ | \frac{\sin(n^{\mu_j} v)}{n^{\mu_j} v}-1| + |\frac{\sin(n^{\mu_j-2\varepsilon} v)}{n^{\mu_j} v} | \right] e^{- \int_{t_j}^{t_{j+1}}\limits |v|^{\alpha(x)} n^{\frac{\alpha(x)}{\alpha(t_j)}}dx} dv\\
& \leq & \frac{1}{K_U^1} \int_{\bbbr} (\frac{1}{6} n^{2 \mu_j } v^2+ \frac{1}{n^{2\varepsilon}} ) e^{- \int_{t_j}^{t_{j+1}}\limits |v|^{\alpha(x)} n^{\frac{\alpha(x)}{\alpha(t_j)}}dx} dv\\
& \leq &  \frac{1}{6 K_U^1} n^{2(\frac{1}{c_U}+\varepsilon-\beta )}  \int_{\bbbr} v^2 e^{- \int_{t_j}^{t_{j+1}}\limits |v|^{\alpha(x)} n^{\frac{\alpha(x)}{\alpha(t_j)}}dx} dv + \frac{K_U^2}{K_U^2} \frac{1}{n^{2 \varepsilon}}.
 \end{eqnarray*}
There exist $K_U^3 \in \bbbr$ and $n_0 \in \bbbn$ such that, for all $n \geq n_0$, 
$$\sup_{j \in J_n(U)}\limits \int_{\bbbr} v^2 e^{- \int_{t_j}^{t_{j+1}}\limits |v|^{\alpha(x)} n^{\frac{\alpha(x)}{\alpha(t_j)}}dx} dv \leq K_U^3 <+\infty.$$ 
As a consequence, $\lim_{n \rightarrow +\infty}\limits (\sup_{j \in J_n(U)}\limits |L_n^j| ) =0.$

 \end{proof}

\subsection{Estimates for the number of increments and determination of the spectrum}
  
  \subsubsection{Lemmas}
  
  \begin{lem}\label{LemEncAcc1}
 Almost surely,  $\forall \beta < 0$, $f_g(\beta) = - \infty$.
  \end{lem}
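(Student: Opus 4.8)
The plan is to show that for any fixed $\beta < 0$, the increments $|B(\frac{j+1}{n}) - B(\frac{j}{n})|$ are, with overwhelming probability, never as small as $n^{-(\beta-\varepsilon)} \geq n^{|\beta|-\varepsilon}$, because a positive power of $n$ is a \emph{large} threshold, not a small one, and the increments of a semimartingale of the type considered here are bounded (uniformly on $[0,1]$, almost surely). Concretely, I would fix $\beta < 0$ and choose $\varepsilon > 0$ small enough that $\beta + \varepsilon < 0$ as well. The defining inequality in $\Nb$ requires $|B(\frac{j+1}{n}) - B(\frac{j}{n})| \geq n^{-(\beta+\varepsilon)} = n^{|\beta|-\varepsilon} \to +\infty$. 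Since $B$ is almost surely càdlàg on the compact interval $[0,1]$ (this follows from Lemma \ref{lem1} and the decomposition $D = A + B$ used in the proof of Lemma \ref{lem5}, or directly from the series representation (\ref{LII}) together with Lemma 8 of \cite{LGLVL}), it is almost surely bounded: there is a finite random variable $M$ with $\sup_{t \in [0,1]} |B(t)| \leq M$, hence $|B(\frac{j+1}{n}) - B(\frac{j}{n})| \leq 2M$ for all $n$ and all $j$. Therefore, on the almost sure event $\{\text{$B$ bounded on }[0,1]\}$, as soon as $n$ is large enough that $n^{|\beta|-\varepsilon} > 2M$, we have $X_j = 0$ for every $j$, so $\Nb = 0$.

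From $\Nb = 0$ for all large $n$ we get $\liminf_{n \to \infty} \frac{\log \Nb}{\log n} = -\infty$ (with the usual convention $\log 0 = -\infty$), and since this holds for all sufficiently small $\varepsilon > 0$, the double limit defining $f_g(\beta)$ equals $-\infty$. This disposes of a single fixed $\beta < 0$ almost surely. To obtain the statement ``almost surely, for all $\beta < 0$'', I would then invoke Lemma \ref{inversionencadrement} with $I = (-\infty, 0)$, taking $\overline{h}(\beta) = -\infty$ (or any function tending to $-\infty$, e.g.\ one can first prove the uniform-in-$\beta$ bound $\Nb = 0$ directly, since the argument above does not actually depend on $\beta$ once $\beta < 0$ — the threshold $n^{-(\beta+\varepsilon)}$ only grows faster as $\beta$ decreases). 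In fact the cleanest route is to observe that the almost sure event ``$B$ bounded by $M$'' is a single event not depending on $\beta$, and on it, for every $\beta < 0$ and every $\varepsilon \in (0, |\beta|)$, we have $\Nb = 0$ for $n > (2M)^{1/(|\beta|-\varepsilon)}$; hence $f_g(\beta) = -\infty$ simultaneously for all $\beta < 0$, with no permutation lemma needed.

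The only genuinely substantive point is the almost sure boundedness of the sample paths of $B$ on $[0,1]$, and this is essentially already recorded in the paper: the proof of Lemma \ref{lem1} establishes that $Y$ is almost surely càdlàg via uniform convergence of the partial sums $Y_N$, and the analogous statement for $B$ — for which the series (\ref{LII}) converges uniformly on $[0,1]$ almost surely by Lemma 8 of \cite{LGLVL} — gives that $B$ is càdlàg, hence bounded, on the compact $[0,1]$. So I do not expect a real obstacle here; the lemma is a soft consequence of path regularity, the point being simply that for $\beta < 0$ the large-deviation spectrum is probing increments of \emph{super-constant} size, which a bounded function cannot exhibit.
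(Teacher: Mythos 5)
Your argument is correct, but it is genuinely different from the paper's. The paper proves this lemma distributionally: it invokes the estimate of Lemma \ref{LemE1maj} (valid for all $\beta<\frac{1}{d}$, hence for $\beta<0$) to get $P_n^j \leq K n^{d\beta+d\varepsilon-1}$, bounds $\P(\Nb\geq 1)\leq 1-\prod_j(1-P_n^j)\leq \frac32 K n^{d\beta+d\varepsilon}\to 0$, extracts a subsequence along which $\Nb\to 0$ almost surely to conclude $\liminf_n \frac{\log \Nb}{\log n}=-\infty$ for each fixed $(\beta,\varepsilon)$, and then passes to ``almost surely, for all $\beta<0$'' by intersecting over rational $\beta,\varepsilon$ and using monotonicity of $N_n^{\varepsilon}(\beta)$ in $\varepsilon$. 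You instead argue pathwise: since $B$ is almost surely c\`adl\`ag (hence bounded by some finite $M$ on $[0,1]$), no increment can exceed $2M$, whereas an increment counted in $\Nb$ must be at least $n^{-(\beta+\varepsilon)}=n^{|\beta|-\varepsilon}\to\infty$ when $\varepsilon<|\beta|$; so $\Nb=0$ for all large $n$, and this happens on a single almost sure event independent of $\beta$ and $\varepsilon$, so no quantifier-permutation device is needed. Both routes are valid (the paper itself uses the convention $\log 0=-\infty$ when it concludes from $N_{\sigma(n)}^{\varepsilon}(\beta)\to 0$), and the c\`adl\`ag property of $B$ you rely on is available: $B$ is a semimartingale with representation \eqref{LII} whose partial sums converge uniformly almost surely, exactly as in the proof of Lemma \ref{lem1}. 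Your proof is more elementary and gives the uniformity in $\beta$ for free; the paper's proof uses no path regularity and recycles the machinery (Lemma \ref{LemE1maj}, the Chernoff-type lemmas) that it needs anyway for the other ranges of $\beta$. One small slip: in your opening sentence the relevant threshold is the lower one, $n^{-(\beta+\varepsilon)}$, not $n^{-(\beta-\varepsilon)}$; you state it correctly in the sentences that follow, and also note that you should discard the suggestion of applying Lemma \ref{inversionencadrement} with $\overline{h}\equiv-\infty$ (that lemma is stated for finite bounding functions), which your ``cleanest route'' indeed renders unnecessary.
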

  
   \begin{lem}\label{LemEncAcc2}
 Almost surely,  $\forall \beta \in (0, \frac{1}{d})$,  $f_g(\beta) = \beta d$.
  \end{lem}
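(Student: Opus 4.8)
\textbf{Proof proposal for Lemma~\ref{LemEncAcc2}.}
The plan is to obtain, for each fixed $\beta\in(0,\tfrac1d)$, matching asymptotic bounds on $\Nb$ by feeding the estimates of $P_n^j$ from Lemmas~\ref{LemE1maj} and~\ref{LemE1min} into the Chernoff-type Lemmas~\ref{LemPR7} and~\ref{LemPR8}, and then to pass from ``for each $\beta$, almost surely'' to ``almost surely, for all $\beta$'' by means of Lemma~\ref{inversionencadrement}, applied on each compact subinterval $(\kappa,\tfrac1d-\kappa)$ and then letting $\kappa=\tfrac1k\to 0$. For the upper bound, fix $\kappa>0$, take $\beta\in(\kappa,\tfrac1d-\kappa)$ and $0<\varepsilon<\kappa$. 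Lemma~\ref{LemE1maj} gives $P_n^j\le K n^{\alpha(t_j)(\beta+\varepsilon)-1}\le K n^{d(\beta+\varepsilon)-1}$ for all $j\in\llbracket 1,n\rrbracket$, using $\alpha(t_j)\le d$ and $\beta+\varepsilon>0$; the exponent $b:=d(\beta+\varepsilon)-1$ lies in $(-1,0)$ since $0<\beta+\varepsilon<\tfrac1d$. Lemma~\ref{LemPR7} then yields, almost surely, $\liminf_{n}\frac{\log\Nb}{\log n}\le 1+b=d\beta+d\varepsilon$.

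For the lower bound, using the continuity of $\alpha$ and $d=\sup_{[0,1]}\alpha$, I would fix for each integer $m$ an open interval $U_m\subset(0,1)$ on which $\alpha>d-\tfrac1m$, so that $c_{U_m}\ge d-\tfrac1m$ and $d_{U_m}\le d$; in particular $\beta<\tfrac1d\le\tfrac1{d_{U_m}}$, so Lemma~\ref{LemE1min} applies and gives, for $j\in J_n(U_m)$, $P_n^j\ge K n^{\alpha(t_j)\beta-\alpha(t_j)\varepsilon-1}\ge K n^{c_{U_m}\beta-d_{U_m}\varepsilon-1}$ (here $\beta>0$ is used on the first factor and $\alpha(t_j)\le d_{U_m}$ on the second). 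For $\varepsilon$ small enough depending only on $\kappa$ and $U_m$ the exponent $b:=c_{U_m}\beta-d_{U_m}\varepsilon-1$ lies in $(-1,0)$, whence Lemma~\ref{LemPR8} gives, almost surely, $\liminf_{n}\frac{\log\Nb}{\log n}\ge 1+b=c_{U_m}\beta-d_{U_m}\varepsilon$.

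Now apply Lemma~\ref{inversionencadrement} on $I=(\kappa,\tfrac1d-\kappa)$: with $\overline h(\beta)=d\beta$, $\overline g(\varepsilon)=d\varepsilon$ one gets, almost surely, $f_g(\beta)\le d\beta$ for all $\beta\in I$; and with $\underline h(\beta)=c_{U_m}\beta$, $\underline g(\varepsilon)=-d_{U_m}\varepsilon$ one gets, for each $m$, almost surely $f_g(\beta)\ge c_{U_m}\beta$ for all $\beta\in I$. Intersecting the latter events over $m$ and using $c_{U_m}\to d$ together with $\beta>0$ gives, almost surely, $f_g(\beta)\ge d\beta$ for all $\beta\in I$; combined with the upper bound this gives $f_g(\beta)=d\beta$ for all $\beta\in I$, almost surely. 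Finally, intersecting over $\kappa=\tfrac1k$ and using $\bigcup_k(\tfrac1k,\tfrac1d-\tfrac1k)=(0,\tfrac1d)$ yields the claim.

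The main delicate point is the lower bound: unlike the inequality $\alpha(t_j)\le d$, which holds everywhere and directly delivers the exponent $d\beta$, the supremum $d$ need not be attained by $\alpha$ on any interval, so one cannot localize on an interval with $c_U=d$; the exponent $d\beta$ survives only as a supremum over the family $(U_m)_m$, which is what forces the extra countable intersection over $m$. One should also check that on each $(\kappa,\tfrac1d-\kappa)$ the smallness threshold on $\varepsilon$ needed to have $b\in(-1,0)$ in Lemmas~\ref{LemPR7}--\ref{LemPR8} can be chosen uniformly in $\beta$, so that the hypotheses of Lemma~\ref{inversionencadrement} are literally met.
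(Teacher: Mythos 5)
Your proposal is correct and follows essentially the same route as the paper: Lemma~\ref{LemE1maj} with Lemma~\ref{LemPR7} for the upper bound, Lemma~\ref{LemE1min} with Lemma~\ref{LemPR8} on an interval where $\alpha$ is close to $d$ for the lower bound, and Lemma~\ref{inversionencadrement} to exchange ``for all $\beta$'' and ``almost surely''. The only difference is bookkeeping: the paper lets the interval depend on $\varepsilon$ (taking $U$ with $\alpha\geq d-\varepsilon$, giving the lower bound $(d-\varepsilon)(\beta-\varepsilon)$) and applies Lemma~\ref{inversionencadrement} once on all of $(0,\tfrac 1d)$, which makes your extra countable intersections over $m$ and $\kappa$ unnecessary, though they are harmless.
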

  
     \begin{lem}\label{LemEncAcc3}
Almost surely, $f_g(0) = 0$.
  \end{lem}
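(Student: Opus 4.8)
The plan is to establish the two inequalities $f_g(0)\le 0$ and $f_g(0)\ge 0$ separately, each holding almost surely, and then combine them.

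For the upper bound I would fix $\varepsilon\in(0,1/d)$ and apply Lemma \ref{LemE1maj} with $\beta=0$ — which is legitimate since $0<1/d$ — to obtain constants $K>0$ and $n_0\in\bbbn$ with $P_n^j\le K n^{\alpha(t_j)\varepsilon-1}\le K n^{d\varepsilon-1}$ for all $n\ge n_0$ and all $j$. Since $b:=d\varepsilon-1$ lies in $(-1,0)$, Lemma \ref{LemPR7} then gives, almost surely, $\liminf_{n\to\infty}\frac{\log N_n^\varepsilon(0)}{\log n}\le 1+b=d\varepsilon$. I would then intersect these almost sure events over a sequence $\varepsilon_k\downarrow 0$ and, using that $\varepsilon\mapsto\liminf_n\frac{\log N_n^\varepsilon(0)}{\log n}$ is non-decreasing (so that its limit as $\varepsilon\to 0^+$ is attained along $(\varepsilon_k)$), conclude that almost surely
$$f_g(0)=\lim_{k\to\infty}\liminf_{n\to\infty}\frac{\log N_n^{\varepsilon_k}(0)}{\log n}\le\lim_{k\to\infty}d\varepsilon_k=0.$$

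For the lower bound I would invoke Lemma \ref{inversionlimsup}, which tells us that $f_g$ is upper semicontinuous for every realization, together with Lemma \ref{LemEncAcc2}, which gives, almost surely, $f_g(\beta)=\beta d$ for every $\beta\in(0,1/d)$. Upper semicontinuity at $0$ then forces, almost surely,
$$f_g(0)\ge\limsup_{\beta\to 0}f_g(\beta)\ge\limsup_{\beta\to 0^+}\beta d=0.$$
(An independent, more hands-on argument for $f_g(0)\ge 0$ would use that $B$ is almost surely c\`adl\`ag with a jump of nonzero size at $V_1$: the increment of $B$ over the length-$1/n$ dyadic cell containing $V_1$ converges to that jump, hence eventually lies in $[n^{-\varepsilon},n^\varepsilon]$, so $N_n^\varepsilon(0)\ge 1$ for $n$ large and every $\varepsilon>0$.) Putting the two bounds together yields $f_g(0)=0$ almost surely.

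I do not expect a genuine obstacle here: once Lemma \ref{LemEncAcc2} is in hand, Lemma \ref{inversionlimsup} supplies the lower bound for free, and the estimate of $P_n^j$ from Lemma \ref{LemE1maj} fed into Lemma \ref{LemPR7} supplies the upper bound. The only points that need a little care are (i) that $b=d\varepsilon-1\in(-1,0)$ forces the restriction $\varepsilon<1/d$, which is harmless since $\varepsilon\to 0$; and (ii) the interchange of ``almost surely'' with ``$\varepsilon\to 0$'' in the upper bound, handled by restricting to a countable sequence $\varepsilon_k\downarrow 0$ and using monotonicity of $N_n^\varepsilon(0)$ in $\varepsilon$.
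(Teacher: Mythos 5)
Your proof is correct and follows essentially the same route as the paper: the upper bound comes from Lemma \ref{LemE1maj} at $\beta=0$ fed into Lemma \ref{LemPR7} (giving $\liminf_n \frac{\log N_n^{\varepsilon}(0)}{\log n}\le d\varepsilon$ a.s., hence $f_g(0)\le 0$), and the lower bound from upper semicontinuity (Lemma \ref{inversionlimsup}) combined with Lemma \ref{LemEncAcc2}. Your explicit handling of the restriction $\varepsilon<1/d$ and of the countable intersection over $\varepsilon_k\downarrow 0$ only spells out details the paper leaves implicit.
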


     \begin{lem}\label{LemEncAcc4}
 Almost surely,  $\forall \beta \in [\frac{1}{d}, \frac{1}{c}]$, $f_g(\beta) = 1$.
  \end{lem}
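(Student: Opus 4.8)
The plan is to establish $f_g(\beta)\leq 1$ and $f_g(\beta)\geq 1$ separately, the former deterministically and the latter from the lower bound on $P_n^j$ in Lemma \ref{LemE1min} combined with Lemma \ref{LemPR8}, and then to pass from ``for each fixed $\beta$'' to ``almost surely, for all $\beta$'' by means of Lemmas \ref{inversionencadrement} and \ref{inversionlimsup}. For the upper bound I would simply note that $\Nb=\sum_j X_j\leq n$, so $\frac{\log \Nb}{\log n}\leq 1$ for every $n$, whence $\liminf_{n\to+\infty}\frac{\log \Nb}{\log n}\leq 1$ and $f_g(\beta)\leq 1$ for all $\beta\in\bbbr$.

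For the lower bound I would first fix $\beta\in(\frac{1}{d},\frac{1}{c})$, so that $\frac{1}{\beta}\in(c,d)$. Since $\alpha$ is continuous with $\inf\alpha=c$, $\sup\alpha=d$, it attains the value $\frac{1}{\beta}$ at an interior point $t_0$, and, arguing as in the proof of Lemma \ref{lem9} (or by taking a suitable component of the open set $\{\alpha<\frac{1}{\beta}\}$ admitting $t_0$ as an endpoint), there is a one-sided neighbourhood of $t_0$ on which $\alpha<\frac{1}{\beta}$. Given $\delta>0$, I would then choose an open interval $U$ with $\overline{U}$ inside that neighbourhood and close enough to $t_0$ that, with $c_U=\inf_{t\in U}\alpha(t)$ and $d_U=\sup_{t\in U}\alpha(t)$,
$$\frac{1}{\beta}-\delta<c_U\leq d_U<\frac{1}{\beta},$$
which is possible by uniform continuity of $\alpha$. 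As $\beta<\frac{1}{d_U}$, Lemma \ref{LemE1min} provides, for all sufficiently small $\varepsilon>0$, constants $K>0$ and $n_0$ with
$$P_n^j\geq K\,n^{\alpha(t_j)(\beta-\varepsilon)-1}\geq K\,n^{\,c_U(\beta-\varepsilon)-1}\qquad(n\geq n_0,\ j\in J_n(U)),$$
the last inequality using $\alpha(t_j)\geq c_U$ and $n\geq 1$. Writing $b=c_U(\beta-\varepsilon)-1$, one has $b\in(-1,0)$ since $0<c_U(\beta-\varepsilon)<d_U\beta<1$, so Lemma \ref{LemPR8} yields that, almost surely,
$$\liminf_{n\to+\infty}\frac{\log \Nb}{\log n}\geq 1+b=c_U(\beta-\varepsilon)>\big(\tfrac{1}{\beta}-\delta\big)(\beta-\varepsilon)\geq 1-\varepsilon d-\tfrac{\delta}{c}.$$
Holding $\beta$ and $\varepsilon$ rational and intersecting over rational $\delta\downarrow 0$, I would obtain that, almost surely, $1-\varepsilon d\leq\liminf_{n\to+\infty}\frac{\log \Nb}{\log n}\leq 1$ for every rational $\beta\in(\frac{1}{d},\frac{1}{c})$ and every small rational $\varepsilon$, and then apply Lemma \ref{inversionencadrement} on $I=(\frac{1}{d},\frac{1}{c})$ with $\underline{h}\equiv\overline{h}\equiv 1$, $\underline{g}(\varepsilon)=-\varepsilon d$, $\overline{g}\equiv 0$, concluding that almost surely $f_g(\beta)=1$ for all $\beta\in(\frac{1}{d},\frac{1}{c})$.

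It remains to deal with the two endpoints, on the same almost sure event. By Lemma \ref{LemEncAcc2}, $f_g(\beta)=\beta d\to 1$ as $\beta\uparrow\frac{1}{d}$, and by the interior case just obtained $f_g(\beta)=1$ for $\beta$ slightly below $\frac{1}{c}$; since $f_g$ is upper semicontinuous (Lemma \ref{inversionlimsup}), this forces $f_g(\frac{1}{d})\geq 1$ and $f_g(\frac{1}{c})\geq 1$, and together with $f_g\leq 1$ everywhere this gives $f_g(\frac{1}{d})=f_g(\frac{1}{c})=1$.

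I expect the only real difficulty to be bookkeeping: the auxiliary interval $U$ has to be chosen depending simultaneously on $\beta$, on the window $\varepsilon$, and on the accuracy $\delta$, and one must arrange that the exponent $c_U(\beta-\varepsilon)$ coming out of Lemmas \ref{LemE1min} and \ref{LemPR8} tends to $1$ in a way compatible with the ``almost surely, for all $\beta$'' mechanism of Lemma \ref{inversionencadrement} — which is why I would fix rational $\beta,\varepsilon$ first and invoke that lemma only afterwards, and why I would obtain the two endpoint values from upper semicontinuity rather than re-run the argument there.
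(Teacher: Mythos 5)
Your proof is correct, and its overall architecture is the one the paper uses: a lower bound on $P_n^j$ on a well-chosen subinterval $U$ fed into Lemma \ref{LemPR8}, the trivial bound $\Nb\le n$ for the upper estimate, Lemma \ref{inversionencadrement} to exchange ``almost surely'' and ``for all $\beta$'' on the open interval, and upper semicontinuity (Lemma \ref{inversionlimsup}) together with Lemma \ref{LemEncAcc2} for the endpoints $\frac{1}{d}$ and $\frac{1}{c}$ --- the endpoint treatment is essentially identical to the paper's. The one genuine difference is the key estimate in the interior: the paper places $U$ on the side where $\alpha$ is slightly \emph{above} $\frac{1}{\beta}$, choosing $U$ with $\frac{1}{c_U}<\beta<\frac{1}{d_U}+2\varepsilon$, and invokes Lemma \ref{LemE2min}, which gives $P_n^j\ge K n^{\frac{1}{\alpha(t_j)}+\varepsilon-\beta}\ge Kn^{-\varepsilon}$ at once, hence $\liminf_{n}\frac{\log\Nb}{\log n}\ge 1-\varepsilon$ with no auxiliary parameter; you place $U$ on the side where $\alpha$ is slightly \emph{below} $\frac{1}{\beta}$ and invoke Lemma \ref{LemE1min}, which costs you the extra accuracy parameter $\delta$ and the bound $c_U(\beta-\varepsilon)\ge 1-\varepsilon d-\frac{\delta}{c}$, removed afterwards by a countable intersection over $\delta$. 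Both routes are valid: yours reuses only the estimate already needed for the range $\beta<\frac{1}{d}$ (and sidesteps the compatibility condition $\varepsilon<\beta-\frac{1}{c_U}$ carried by Lemma \ref{LemE2min}), while the paper's choice is leaner, with $\underline{g}(\varepsilon)=-\varepsilon$ and no $\delta$-bookkeeping. Two small remarks: the existence of $U\subset(0,1)$ with $\frac{1}{\beta}-\delta<c_U\le d_U<\frac{1}{\beta}$ is indeed guaranteed by the intermediate value theorem (a component of $\{\alpha<\frac{1}{\beta}\}$ has an endpoint where $\alpha=\frac{1}{\beta}$), as you indicate; and restricting $\beta,\varepsilon$ to rationals before invoking Lemma \ref{inversionencadrement} is unnecessary, since its hypothesis is a separate almost-sure statement for each fixed $\beta\in I$ and each small $\varepsilon$, which your argument provides for every real $\beta$ --- but this is harmless.
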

  
%        \begin{lem}\label{LemEncAcc5}
% Almost surely, $f_g(\frac{1}{c}) = 1$.
%   \end{lem}
%   
       \begin{lem}\label{LemEncAcc6}
 Almost surely,  $\forall \beta \in (\frac{1}{c}, 1+\frac{1}{c})$, $f_g(\beta) = 1+\frac{1}{c} - \beta$.
  \end{lem}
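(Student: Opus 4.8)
To prove Lemma~\ref{LemEncAcc6}, the plan is to establish, for each fixed $\beta\in(\frac{1}{c},1+\frac{1}{c})$, a two--sided estimate for $\liminf_{n\to\infty}\frac{\log \Nb}{\log n}$ whose error terms vanish as $\varepsilon\to0$, and then to pass to ``almost surely, for all $\beta$'' via Lemma~\ref{inversionencadrement}, working on compact subintervals of $(\frac{1}{c},1+\frac{1}{c})$.

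\emph{Upper bound.} Fix $\beta$ and $\varepsilon\in(0,\beta-\frac{1}{c})$. Since $\alpha(t_j)\ge c$ for every $j$, Lemma~\ref{LemE2maj} gives $P_n^j\le K n^{\frac{1}{\alpha(t_j)}+\varepsilon-\beta}\le K n^{\frac{1}{c}+\varepsilon-\beta}$ for all $j$ and all $n$ large. The exponent $b:=\frac{1}{c}+\varepsilon-\beta$ lies in $(-1,0)$, since $\varepsilon<\beta-\frac{1}{c}$ forces $b<0$ and $\beta<1+\frac{1}{c}$ forces $b>-1$. Lemma~\ref{LemPR7} then yields, almost surely, $\liminf_{n\to\infty}\frac{\log \Nb}{\log n}\le 1+\frac{1}{c}+\varepsilon-\beta$; so in Lemma~\ref{inversionencadrement} one may take $\overline{h}(\beta)=1+\frac{1}{c}-\beta$ and $\overline{g}(\varepsilon)=\varepsilon$.

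\emph{Lower bound.} The key point is to realise the global minimum $c$ of $\alpha$ as a supremum of $\alpha$ over a small open interval. As $\alpha$ is continuous on $[0,1]$, it attains $c$ at some $t^{*}$; whether $t^{*}$ is interior or an endpoint, for every $\delta>0$ there is an open interval $U\subset(0,1)$ with $d_U:=\sup_{U}\alpha<c+\delta$ (and automatically $c_U\ge c$). Given small $\varepsilon>0$, choose such a $U=U(\varepsilon)$ with $\frac{1}{d_U}\ge \frac{1}{c}-\varepsilon$. For $j\in J_n(U)$ one has $\alpha(t_j)\le d_U$, and since $\beta>\frac{1}{c}\ge\frac{1}{c_U}$ and $\varepsilon<\beta-\frac{1}{c_U}$ for $\varepsilon$ small, Lemma~\ref{LemE2min} gives $P_n^j\ge K n^{\frac{1}{\alpha(t_j)}+\varepsilon-\beta}\ge K n^{\frac{1}{d_U}+\varepsilon-\beta}$ for $n$ large. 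The exponent $b:=\frac{1}{d_U}+\varepsilon-\beta$ lies in $(-1,0)$ for $\varepsilon$ small: $\frac{1}{d_U}\le\frac{1}{c}<\beta$ gives $b<0$, while $\frac{1}{d_U}+\varepsilon\ge\frac{1}{c}$ together with $\beta<1+\frac{1}{c}$ gives $b>-1$. Lemma~\ref{LemPR8} then yields, almost surely, $\liminf_{n\to\infty}\frac{\log \Nb}{\log n}\ge 1+\frac{1}{d_U}+\varepsilon-\beta\ge 1+\frac{1}{c}-\beta$; so one may take $\underline{h}(\beta)=1+\frac{1}{c}-\beta$ and $\underline{g}\equiv 0$.

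\emph{Conclusion.} For each rational $\rho\in(0,1/2]$, all the constraints above ($\varepsilon<\beta-\frac{1}{c}$, $\varepsilon<\beta-\frac{1}{c_U}$, and $b\in(-1,0)$ in both estimates) hold uniformly for $\beta\in I_\rho:=[\frac{1}{c}+\rho,\,1+\frac{1}{c}-\rho]$ as soon as $\varepsilon<\rho$, and the interval $U(\varepsilon)$ can be chosen once for the whole of $I_\rho$. Applying Lemma~\ref{inversionencadrement} on $I_\rho$ with the functions above gives that, almost surely, $f_g(\beta)=1+\frac{1}{c}-\beta$ for all $\beta\in I_\rho$; intersecting over $\rho\in\mathbb{Q}\cap(0,1/2]$ and using $\bigcup_\rho I_\rho=(\frac{1}{c},1+\frac{1}{c})$ finishes the proof. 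The main obstacle is the lower bound: one must exhibit an open interval on which $\alpha$ stays arbitrarily close to its global minimum $c$ and, crucially, keep $\frac{1}{d_U}+\varepsilon-\beta$ strictly above $-1$ — which cannot be done on all of $(\frac{1}{c},1+\frac{1}{c})$ at once, since this exponent tends to $-1$ as $\beta\to 1+\frac{1}{c}$. This is precisely why the argument is localised to the compact intervals $I_\rho$, and why the continuity of $\alpha$ (giving $d_U\to c$ along a nested family of intervals shrinking to $t^{*}$) is indispensable.
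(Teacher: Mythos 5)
Your proof is correct and follows essentially the same route as the paper: the upper bound via Lemma~\ref{LemE2maj} and Lemma~\ref{LemPR7}, the lower bound via Lemma~\ref{LemE2min} and Lemma~\ref{LemPR8} on an interval $U$ where $\alpha$ is close to its minimum $c$, and Lemma~\ref{inversionencadrement} to exchange ``for all $\beta$'' and ``almost surely''. Your extra localisation to the compact intervals $I_\rho$ is just a slightly more explicit way of handling the $\beta$-dependence of the admissible $\varepsilon$ (the paper works directly with $E_\beta=(0,\min(\beta-\frac{1}{c},1+\frac{1}{c}-\beta))$ on the whole open interval), and does not change the argument.
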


       \begin{lem}\label{LemEncAcc7}
Almost surely, $f_g(1+\frac{1}{c}) = 0$.
  \end{lem}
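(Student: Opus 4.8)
Proof proposal for Lemma \ref{LemEncAcc7} ($f_g(1+\tfrac1c)=0$).

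The plan is to pin down the value of $f_g$ at the right endpoint $\beta=1+\tfrac1c$ by sandwiching $N_n^{\varepsilon}(\beta)$ between suitable bounds, exactly as in the adjacent lemmas. For the upper bound $f_g(1+\tfrac1c)\le 0$, I would first observe that for $\beta$ near $1+\tfrac1c$ and $\varepsilon$ small one should still be in the regime where $\mathrm{Lemma~\ref{LemE2maj}}$ (or a minor variant of it near the edge) applies: for every $j\in\llbracket 1,n\rrbracket$ one has $P_n^j\le K n^{1/\alpha(t_j)+\varepsilon-\beta}\le K n^{1/c+\varepsilon-\beta}=K n^{\varepsilon-1}$. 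Thus the exponent $b:=\varepsilon-1$ lies in $(-1,0)$, and $\mathrm{Lemma~\ref{LemPR7}}$ gives, almost surely, $\liminf_n \frac{\log N_n^{\varepsilon}(\beta)}{\log n}\le 1+b=\varepsilon$ for each fixed admissible $\varepsilon$. Letting $\varepsilon\to0$ through a countable sequence yields $f_g(1+\tfrac1c)\le 0$ almost surely. (Here I have used that for $\beta=1+\tfrac1c$ the quantity $\frac{1}{\alpha(t_j)}+\varepsilon-\beta$ is maximised over $t_j$ when $\alpha(t_j)$ is closest to $c$, giving exactly $\varepsilon-1$.)

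For the lower bound $f_g(1+\tfrac1c)\ge 0$, it suffices to exhibit, almost surely, at least one scale-$n$ dyadic increment whose log-size is within $\varepsilon$ of $-(1+\tfrac1c)\log n$, for all small $\varepsilon$ and infinitely many $n$ — which already gives $N_n^{\varepsilon}\ge 1$, hence $\frac{\log N_n^\varepsilon}{\log n}\ge 0$ along a subsequence. I would pick an open interval $U$ on which $\inf_U\alpha$ is as close to $c$ as desired (possible by continuity of $\alpha$), so that $c_U<\beta-\varepsilon$ fails to block the use of $\mathrm{Lemma~\ref{LemE2min}}$-type estimates; more robustly, I would estimate directly $P_n^j=\P(n^{-(\beta+\varepsilon)}\le|Y(t_{j+1})-Y(t_j)|\le n^{-(\beta-\varepsilon)})$ from below using the characteristic-function/Parseval representation as in the proof of Lemma \ref{LemE2min}, obtaining $P_n^j\ge K n^{1/\alpha(t_j)+\varepsilon-\beta}$ for $j\in J_n(U)$. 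Summing a Borel–Cantelli argument over $n$ (the events $\{X_j=1\}$ for $j\in J_n(U)$ are not independent across $n$ but within a fixed $n$ the increments of $B$ are independent, so $\P(N_n^\varepsilon(\beta)=0)=\prod_{j\in J_n(U)}(1-P_n^j)\le \exp(-K_U n\cdot n^{1/d_U+\varepsilon-\beta})$) shows this is summable provided $1/d_U+\varepsilon-\beta>-1$, i.e. $\beta<1+1/d_U+\varepsilon$, which holds once $U$ is chosen small enough that $d_U$ is close to $c$. Hence almost surely $N_n^\varepsilon(\beta)\ge1$ eventually, giving $f_g(1+\tfrac1c)\ge 0$.

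Combining the two bounds gives $f_g(1+\tfrac1c)=0$ for each fixed small $\varepsilon$, and the passage ``for all $\beta$'' is not needed here since we are at a single point; a countable intersection over $\varepsilon\to0$ suffices. The main obstacle I anticipate is the borderline nature of the exponent: at $\beta=1+\tfrac1c$ the quantity $1/\alpha(t_j)+\varepsilon-\beta$ equals $\varepsilon-1$ only at points where $\alpha=c$, and the set $\{\alpha=c\}$ may be small, so the lower-bound construction must localise near that set while keeping $d_U$ close to $c$ — this is exactly where the $C^1$ (hence continuity) hypothesis on $\alpha$ and the freedom to shrink $U$ are essential, and one must check that $\mathrm{Lemma~\ref{LemE2min}}$'s hypothesis $\varepsilon\in(0,\beta-\tfrac1{c_U})$ is compatible with $\beta$ this large, which it is since $\beta-\tfrac1{c_U}\approx 1-\varepsilon'>0$.
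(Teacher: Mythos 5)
Your argument is correct, and the upper bound is exactly the paper's: Lemma \ref{LemE2maj} applies directly at $\beta=1+\frac{1}{c}$ (its hypothesis $\varepsilon\in(0,\beta-\frac{1}{c})=(0,1)$ holds, no ``edge variant'' is needed), and combined with Lemma \ref{LemPR7} for $b=\varepsilon-1$ it gives a.s. $\liminf_n \frac{\log \Nb}{\log n}\leq \varepsilon$, whence $f_g(1+\frac{1}{c})\leq 0$ after a countable intersection in $\varepsilon$. Where you genuinely differ is the lower bound: the paper does no new probability estimate at the endpoint, but invokes its upper semicontinuity result (Lemma \ref{inversionlimsup}) together with the already established values $f_g(\beta)=1+\frac{1}{c}-\beta$ on $(\frac{1}{c},1+\frac{1}{c})$ (Lemma \ref{LemEncAcc6}), so that $f_g(1+\frac{1}{c})\geq \limsup_j f_g(x_j)=0$ along $x_j\uparrow 1+\frac{1}{c}$. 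You instead localise on an interval $U$ around a point where the continuous function $\alpha$ (nearly) attains its infimum $c$, so that $\frac{1}{c}-\frac{1}{d_U}<\varepsilon$; Lemma \ref{LemE2min} applies comfortably there (since $\beta-\frac{1}{c_U}\geq 1$), giving $P_n^j\geq K n^{1/d_U+\varepsilon-\beta}$ with exponent strictly greater than $-1$, and your Borel--Cantelli estimate $\P(\Nb=0)\leq \prod_{j\in J_n(U)}(1-P_n^j)\leq \exp(-KK_U n^{1/d_U-1/c+\varepsilon})$ (note this first relation is an inequality, not an equality, since you only discard the indices outside $J_n(U)$) shows a.s. $\Nb\geq 1$ eventually, hence $\liminf_n\geq 0$; equivalently you could quote Lemma \ref{LemPR8} with $b=1/d_U+\varepsilon-\beta$. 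Since only the single point $\beta=1+\frac{1}{c}$ is involved, your countable intersection over $\varepsilon$ indeed suffices and neither Lemma \ref{inversionencadrement} nor Lemma \ref{inversionlimsup} is needed. The paper's route is shorter given the machinery already in place and needs no localisation near the minimiser of $\alpha$; yours is self-contained at the endpoint (independent of Lemma \ref{LemEncAcc6}) and amounts to treating $\beta=1+\frac{1}{c}$ as the boundary case of that lemma's argument, at the mild price of using that $d_U$ can be made arbitrarily close to $c$ by continuity of $\alpha$ on the compact interval $[0,1]$.
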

  
         \begin{lem}\label{LemEncAcc8}
 Almost surely,  $\forall \beta > 1+\frac{1}{c}$, $f_g(\beta) = - \infty$.
  \end{lem}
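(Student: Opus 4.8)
The plan is to show that, for a fixed $\beta>1+\frac{1}{c}$, the expected number of grid increments of $B$ of size comparable to $n^{-\beta}$ decays like a negative power of $n$, so that $\Nb$ vanishes infinitely often along a geometric subsequence and $f_g(\beta)=-\infty$; the passage to ``for all $\beta$'' is then carried out exactly as in the proof of Lemma~\ref{inversionencadrement}.

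First I fix $\beta>1+\frac{1}{c}$ and choose $\varepsilon\in\bigl(0,\beta-1-\frac{1}{c}\bigr)$, which also ensures $\varepsilon<\beta-\frac{1}{c}$. Since $\alpha(t_j)\ge c$, Lemma~\ref{LemE2maj} provides $K>0$ and $n_0\in\bbbn$ such that $P_n^j\le Kn^{\frac{1}{\alpha(t_j)}+\varepsilon-\beta}\le Kn^{b}$ for all $n\ge n_0$ and all $j$, where $b:=\frac{1}{c}+\varepsilon-\beta<-1$ by the choice of $\varepsilon$. As $\Nb=\sum_jX_j$ with $X_j$ Bernoulli of parameter $P_n^j$, Markov's inequality gives $\P(\Nb\ge1)\le\E[\Nb]=\sum_jP_n^j\le Kn^{1+b}$. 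Since $1+b<0$, $\sum_{k\ge1}\P\bigl(N^{\varepsilon}_{2^k}(\beta)\ge1\bigr)\le K\sum_{k\ge1}2^{k(1+b)}<\infty$, so the Borel--Cantelli lemma yields that, almost surely, $N^{\varepsilon}_{2^k}(\beta)=0$ for all large $k$. Evaluating along $n=2^k$ then gives $\liminf_{n\to\infty}\frac{\log\Nb}{\log n}=-\infty$, and since $f_g(\beta)\le\liminf_{n\to\infty}\frac{\log\Nb}{\log n}$ (the latter being non-decreasing in $\varepsilon$), we get $f_g(\beta)=-\infty$ almost surely. Note that Lemma~\ref{LemPR7} is too weak for this: its conclusion $\liminf\le1+b$ is confined to $b\in(-1,0)$ and would only yield $f_g(\beta)\le1$, which is why one must exploit $b<-1$ directly through the subsequence $n=2^k$.

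To interchange ``for all $\beta$'' with ``almost surely'' one cannot run Borel--Cantelli over the uncountable family $\{\beta>1+\frac{1}{c}\}$, and Lemma~\ref{inversionencadrement} does not apply verbatim since it is stated for real-valued $\overline h$; this is the one point needing care. I would replay the countable-infimum argument from its proof: for rationals $1+\frac{1}{c}<\beta_1<\beta_2$, with $M_n(\beta_1,\beta_2)$ as there, Lemma~\ref{LemE2maj} applied with $\bar\beta=\frac{\beta_1+\beta_2}{2}$ and $\bar\varepsilon=\frac{\beta_2-\beta_1}{2}$ (which satisfy $\bar\beta>\frac{1}{c}$ and $\bar\varepsilon<\bar\beta-\frac{1}{c}$, since $\beta_1>\frac{1}{c}$) gives $\P\bigl(n^{-\beta_2}\le|B(t_{j+1})-B(t_j)|\le n^{-\beta_1}\bigr)\le Kn^{\frac{1}{\alpha(t_j)}-\beta_1}\le Kn^{\frac{1}{c}-\beta_1}$ with $\frac{1}{c}-\beta_1<-1$. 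Exactly as above, $\E[M_n(\beta_1,\beta_2)]\le Kn^{1+\frac{1}{c}-\beta_1}$ and $\liminf_{n\to\infty}\frac{\log M_n(\beta_1,\beta_2)}{\log n}=-\infty$ almost surely. Since there are only countably many such pairs, almost surely this holds for all of them simultaneously; then for any $\beta>1+\frac{1}{c}$ one picks rationals $1+\frac{1}{c}<\beta_1<\beta<\beta_2$ and invokes the identity $f_g(\beta)=\inf\{\liminf_n\frac{\log M_n(\beta_1,\beta_2)}{\log n}:\beta_1,\beta_2\in\mathbb{Q},\ \beta_1<\beta<\beta_2\}$ from the proof of Lemma~\ref{inversionencadrement} to conclude $f_g(\beta)=-\infty$. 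Everything past this interchange is a one-line Markov plus Borel--Cantelli estimate built on the bound of Lemma~\ref{LemE2maj}, so this interchange is really the only obstacle.
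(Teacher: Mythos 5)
Your proof is correct and takes essentially the same route as the paper: the key input is Lemma \ref{LemE2maj}, which gives $P_n^j \le K n^{\frac{1}{c}+\varepsilon-\beta}$ with total exponent $1+\frac{1}{c}+\varepsilon-\beta<0$, so that $\Nb$ vanishes along a subsequence and $\liminf_n \frac{\log \Nb}{\log n}=-\infty$ almost surely for each fixed $\beta$, after which a countable rational-density argument upgrades this to ``almost surely, for all $\beta>1+\frac{1}{c}$''. The only differences are cosmetic: the paper deduces the almost-sure vanishing subsequence from convergence in probability instead of your Markov plus Borel--Cantelli bound along $n=2^k$, and it performs the quantifier interchange via the rational approximations $\beta_j=[\beta j]/j$ as at the end of the proof of Lemma \ref{LemEncAcc1}, rather than via the $M_n(\beta_1,\beta_2)$ infimum identity from the proof of Lemma \ref{inversionencadrement}; both are the same countable-density idea.
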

  
    \subsubsection{Proofs}
  
  \begin{proof}[Proof of Lemma \ref{LemEncAcc1}]
   Fix $\beta <0$. Denote $E_{\beta} = (0,-\beta).$ If $\varepsilon \in E_{\beta}$, then
 \begin{eqnarray*}
  \P (\Nb \geq 1) & = & 1- \P (\Nb = 0 )\\
  & = & 1 - \prod_{j=1}^n\limits (1-P_n^j).
 \end{eqnarray*}
  Lemma \ref{LemE1maj} implies that, for $n$ large enough,
  \begin{eqnarray*}
  \P (\Nb \geq 1) & \leq &  1- (1-Kn^{d\beta + d\varepsilon-1})^n\\
  & \leq & 1-e^{-\frac{3}{2} K n^{d\beta + d\varepsilon}}\\
  & \leq & \frac{3}{2} K n^{d\beta + d\varepsilon},
  \end{eqnarray*} 
  and thus $\lim_{n \rightarrow +\infty}\limits \P (\Nb \geq 1) =0$. 
  Since $\Nb$ tends to $0$ in probability when $n$ tends to infinity, there exists a subsequence $\sigma(n)$ such that 
  $N_{\sigma(n)}^{\varepsilon}(\beta)$ tends to $0$ almost surely. This implies that,
  almost surely, $\liminf_{n \rightarrow +\infty}\limits \frac{\log \Nb}{\log n} =-\infty$.
  We have proved that:
    \begin{equation}\label{voidspectrum}
 \forall \beta < 0, \forall \varepsilon \in E_{\beta}, \mbox{almost surely, }
  \liminf_{n \rightarrow +\infty}\limits \frac{\log \Nb}{\log n} =-\infty.
  \end{equation}
  
Let $\Omega_{\beta, \varepsilon} = \{ \omega: (\ref{voidspectrum}) \textrm{ holds} \}$, $\Omega_{\beta} = \bigcap_{\varepsilon \in E_{\beta} \cap \mathbb{Q}}\limits  \Omega_{\beta, \varepsilon}$ and 
$\Omega =  \bigcap_{\beta \in (-\infty,0) \cap \mathbb{Q}}\limits  \Omega_{\beta}.$ Note that
$\Omega$ has probability $1$, and consider $\omega \in \Omega$.
Choose $\beta <0$ and $j \in \bbbn$. Set $\beta_j = \frac{[\beta j]}{j}$. For $j$ large enough,  
$N_{n}^{1/j}(\beta) \leq N_{n}^{2/j}(\beta_j)$. In addition 
$ \liminf_{n \rightarrow \infty}\limits \frac{\log  N_{n}^{2/j}(\beta_j)}{\log n} = -\infty$. As a consequence,
$ \liminf_{n \rightarrow \infty}\limits \frac{\log  N_{n}^{1/j}(\beta)}{\log n} = -\infty$ and, almost surely, for all $\beta<0$, $f_g(\beta) = -\infty.$
 
 \end{proof}

\begin{proof}[Proof of Lemma \ref{LemEncAcc2}]
     Fix $\beta \in (0,\frac{1}{d})$. Denote $E_{\beta} =\{ \varepsilon \in (0, \min(\beta, \frac{1}{d}-\beta)) \textrm{ such that } (d-\varepsilon)(\beta-\varepsilon) \in (0,1) \} $. Choose $\varepsilon \in E_{\beta}$. By Lemma \ref{LemE1maj},  
 there exists $K>0$  and $n_0 \in \bbbn$ such that, for all $n \geq n_0$ and all $j \in \llbracket 1,n \rrbracket$, 
   $$P_n^j \leq K n^{d \beta + d \varepsilon -1}.$$ Lemma \ref{LemPR7} then implies that, almost surely, 
   
   \begin{equation}\label{inegbeta0}
   \liminf_{n \rightarrow +\infty}\limits \frac{\log \Nb}{\log n} \leq d \beta + d \varepsilon.
   \end{equation}

   There exists an open interval $U$ such that, for all $t \in U$, $\alpha(t) \geq d - \varepsilon$. 
   Using Lemma \ref{LemE1min}, there exist $K>0$ and $n_0 \in \bbbn$ such that, for all $n \geq n_0$
   and all $j \in J_n(U)$, 
   $$P_n^j \geq K n^{(d- \varepsilon)( \beta - \varepsilon) -1},$$
   and Lemma \ref{LemPR8} then implies that, almost surely, 
   
   \begin{equation*}
   \liminf_{n \rightarrow +\infty}\limits \frac{\log \Nb}{\log n} \geq (d- \varepsilon)( \beta - \varepsilon).
   \end{equation*}
   We thus have proved that, for all $\beta \in (0,\frac{1}{d})$ and all $\varepsilon \in E_{\beta}$, almost surely, 
   \begin{equation}\label{inegalbetad}
    (d- \varepsilon)( \beta - \varepsilon) \leq  \liminf_{n \rightarrow +\infty}\limits \frac{\log \Nb}{\log n} \leq  d \beta + d \varepsilon.
   \end{equation}
 %which implies that $f_g(\beta) = d \beta$. 
 Then Lemma \ref{inversionencadrement} ensures that almost surely, for all $\beta \in (0,\frac{1}{d})$, $f_g(\beta) = d \beta.$

% Set $\Omega_{\beta, \varepsilon} = \{ \omega: (\ref{inegalbetad}) \textrm{ holds} \}$, $\Omega_{\beta} = \bigcap_{\varepsilon \in E_{\beta} \cap \mathbb{Q}}\limits  \Omega_{\beta, \varepsilon}$ and 
% $\Omega =  \bigcap_{\beta \in (0,\frac{1}{d}) \cap \mathbb{Q}}\limits  \Omega_{\beta}.$ 
% The event $\Omega$ has probability $1$. Choose $\omega \in \Omega$.
% Let $\beta \in (0,\frac{1}{d})$ and $j \in \bbbn$. Denote $\beta_j = \frac{[\beta j]}{j}$. For $j$ large enough,
% $N_{n}^{1/j}(\beta) \leq N_{n}^{2/j}(\beta_j)$. Also, 
% $ \liminf_{n \to \infty}\limits \frac{\log  N_{n}^{2/j}(\beta_j)}{\log n} \leq d \beta_j + \frac{2d}{j}$ and thus  
% $ \liminf_{n \to \infty}\limits \frac{\log  N_{n}^{1/j}(\beta)}{\log n} \leq d \beta_j + \frac{2d}{j}$.
% As a consequence, $f_g(\beta) \leq d \beta$. 
% 
% {\bf On devrait pouvoir se passer de ça : }
% 
% Ensuite, par semi-continuité supérieure, 
% \begin{eqnarray*}
%  f_g(\beta) & \geq & \limsup_{j \rightarrow +\infty}\limits f_g(\beta_j)\\
%  & = & \limsup_{j \rightarrow +\infty}\limits d \beta_j\\
%  & = & d \beta.
% \end{eqnarray*}

    \end{proof}

 \begin{proof}[Proof of Lemma \ref{LemEncAcc3}]
We obtain Inequality (\ref{inegbeta0}) for $\beta=0$ by applying Lemma \ref{LemE1maj} and Lemma \ref{LemPR7}. This yields that, almost surely,
$$\liminf_{n \rightarrow +\infty}\limits \frac{\log N_{n}^{\varepsilon}(0)}{\log n} \leq d \varepsilon.$$ 
As a consequence, $f_g(0) \leq 0$.     

Then apply Lemma \ref{inversionlimsup} and Lemma \ref{LemEncAcc2} to obtain
\begin{eqnarray*}
  f_g(0) & \geq & \limsup_{j \rightarrow +\infty}\limits f_g(\frac{1}{j})\\
  & = & \limsup_{j \rightarrow +\infty}\limits  \frac{d}{j}\\
  & = & 0.
\end{eqnarray*}

\end{proof}

   \begin{proof}[Proof of Lemma \ref{LemEncAcc4}]
    Let $\beta \in (\frac{1}{d}, \frac{1}{c})$ and $\varepsilon \in (0,1)$. 
    
    Choose an open interval $U$ such that $\beta > \frac{1}{c_U}$ and $ \beta < \frac{1}{d_U} + 2 \varepsilon$. 
    Lemma \ref{LemE2min} then ensures that there exist $K>0$ and $n_0 \in \bbbn$ such that,
    for all $n \geq n_0$ and all $j \in J_n(U)$, 
   \begin{eqnarray*}
    P_n^j & \geq & K n^{\frac{1}{\alpha(t_j)}+\varepsilon-\beta}\\
    & \geq & K n^{\frac{1}{d_U}+\varepsilon-\beta}\\
    & \geq &  K n^{-\varepsilon}.
   \end{eqnarray*}
   By Lemma \ref{LemPR8}, for all $\beta \in (\frac{1}{d}, \frac{1}{c})$, almost surely, 
   
   \begin{equation*}
   \liminf_{n \rightarrow +\infty}\limits \frac{\log \Nb}{\log n} \geq 1- \varepsilon.
   \end{equation*}
%which implies $f_g(\beta) \geq 1$ and finally $f_g(\beta) = 1$. 
We conclude by applying Lemma \ref{inversionencadrement}.

% On pose $\Omega_{\beta} = \{ \omega | f_g(\beta) = 1 \}$, $\Omega =  \bigcap_{\beta \in (\frac{1}{d}, \frac{1}{c}) \cap \mathbb{Q}}\limits  \Omega_{\beta}$, et $\beta_j = \frac{[j\beta]}{j}$.
% Soit $\beta \in (\frac{1}{d}, \frac{1}{c})$. Par semi-continuité supérieure, 
% \begin{eqnarray*}
%  f_g(\beta) & \geq & \limsup_{j \rightarrow +\infty}\limits f_g(\beta_j)\\
%  & = & \limsup_{j \rightarrow +\infty}\limits 1\\
%  & = & 1.
% \end{eqnarray*}
For $\beta = \frac{1}{c}$, we apply Lemma \ref{inversionlimsup} to obtain 
\begin{eqnarray*}
  f_g(\frac{1}{c}) & \geq & \limsup_{j \rightarrow +\infty}\limits f_g(\frac{[\beta j]}{j})\\
  & = & \limsup_{j \rightarrow +\infty}\limits 1.
\end{eqnarray*}

For $\beta = \frac{1}{d}$, Lemma \ref{inversionlimsup} and Lemma \ref{LemEncAcc2} lead to 
\begin{eqnarray*}
  f_g(\frac{1}{d}) & \geq & \limsup_{j \rightarrow +\infty}\limits f_g(\frac{[\beta j]}{j})\\
  & = & \limsup_{j \rightarrow +\infty}\limits d \frac{[\beta j]}{j}\\
  & = & 1.
\end{eqnarray*}
\end{proof}

 \begin{proof}[Proof of Lemma \ref{LemEncAcc6}]
      Let $\beta \in (\frac{1}{c}, 1+\frac{1}{c})$.  Denote $E_{\beta} = (0, \min(\beta- \frac{1}{c},1+\frac{1}{c} - \beta ) )$. Fix $\varepsilon \in E_{\beta}$. By Lemma \ref{LemE2maj}, there exist $K>0$ and $n_0 \in \bbbn$ such that,
      for all $n \geq n_0$ and all $j \in \llbracket 1,n \rrbracket$, 
   $$P_n^j \leq K n^{\frac{1}{c}+\varepsilon - \beta}.$$ 
   Lemma \ref{LemPR7} then implies that, almost surely, 
   
   \begin{equation}\label{inegbeta1}
   \liminf_{n \rightarrow +\infty}\limits \frac{\log \Nb}{\log n} \leq 1+\frac{1}{c}+\varepsilon - \beta.
   \end{equation}

   Choose an open interval $U$ such that $\frac{1}{c} - 2 \varepsilon \leq \frac{1}{d_U} \leq \beta - \varepsilon $. 
   By Lemma \ref{LemE2min}, there exist $K>0$  and $n_0 \in \bbbn$ such that, for all$ n \geq n_0$ and all
   $j \in J_n(U)$, 
   $$P_n^j \geq K n^{\frac{1}{d_U}+\varepsilon - \beta}.$$ 
   Lemma \ref{LemPR8} then implies that, almost surely, 
   
   \begin{equation*}
   \liminf_{n \rightarrow +\infty}\limits \frac{\log \Nb}{\log n} \geq 1+ \frac{1}{d_U}+\varepsilon - \beta \geq 1+ \frac{1}{c} - \varepsilon - \beta.
   \end{equation*}
   We have proved that, for all $\beta \in (\frac{1}{c}, 1+\frac{1}{c})$ and all $\varepsilon \in E_{\beta}$, 
   almost surely,
   \begin{equation}\label{inegalbetac}
    1+\frac{1}{c}-\varepsilon - \beta \leq  \liminf_{n \rightarrow +\infty}\limits \frac{\log \Nb}{\log n} \leq 1+\frac{1}{c}+\varepsilon - \beta.
   \end{equation}
%and thus $f_g(\beta) = 1+\frac{1}{c} - \beta$.

% Notons $\Omega_{\beta, \varepsilon} = \{ \omega | (\ref{inegalbetac}) \textrm{ holds} \}$, $\Omega_{\beta} = \bigcap_{\varepsilon \in E_{\beta} \cap \mathbb{Q}}\limits  \Omega_{\beta, \varepsilon}$ and 
% $\Omega =  \bigcap_{\beta \in (\frac{1}{c}, 1+ \frac{1}{c}) \cap \mathbb{Q}}\limits  \Omega_{\beta}.$ On considère l'événement $\Omega$ de probabilité $1$.
% Soit $\beta \in (\frac{1}{c}, 1+ \frac{1}{c})$ and $j \in \bbbn$. On note $\beta_j = \frac{[\beta j]}{j}$. For $j$ large enough, on a 
% $N_{n}^{1/j}(\beta) \leq N_{n}^{2/j}(\beta_j)$ et $ \liminf_{n \infty}\limits \frac{\log  N_{n}^{2/j}(\beta_j)}{\log n} \leq 1+\frac{1}{c}+\frac{2}{j} - \beta_j$ donc  
% $ \liminf_{n \infty}\limits \frac{\log  N_{n}^{1/j}(\beta)}{\log n} \leq 1+\frac{1}{c}+\frac{2}{j} - \beta_j$ et $f_g(\beta) \leq 1+\frac{1}{c} - \beta$. Ensuite, par semi-continuité supérieure, 
% \begin{eqnarray*}
%  f_g(\beta) & \geq & \limsup_{j \rightarrow +\infty}\limits f_g(\beta_j)\\
%  & = & \limsup_{j \rightarrow +\infty}\limits 1+\frac{1}{c} - \beta_j\\
%  & = & 1+\frac{1}{c} - \beta.
% \end{eqnarray*}

The result then follows from Lemma \ref{inversionencadrement}.
    \end{proof}

 \begin{proof}[Proof of Lemma \ref{LemEncAcc7}]

We obtain Inequality (\ref{inegbeta1}) for $\beta=1+\frac{1}{c}$ by applying Lemma \ref{LemE2maj} and Lemma \ref{LemPR7}. This yields that, almost surely,
$$\liminf_{n \rightarrow +\infty}\limits \frac{\log N_{n}^{\varepsilon}(1+\frac{1}{c})}{\log n} \leq d \varepsilon.$$ 
As a consequence, $f_g(1+\frac{1}{c}) \leq 0$.   

Then, Lemma \ref{inversionlimsup} and Lemma \ref{LemEncAcc6} lead to 
\begin{eqnarray*}
  f_g(1+\frac{1}{c}) & \geq & \limsup_{j \rightarrow +\infty}\limits f_g(\frac{[(1+\frac{1}{c}) j]}{j})\\
  & = & \limsup_{j \rightarrow +\infty}\limits (1+\frac{1}{c} - \frac{[(1+\frac{1}{c}) j]}{j})\\
  & = & 0.
\end{eqnarray*}

\end{proof}

  \begin{proof}[Proof of Lemma \ref{LemEncAcc8}]
   Let $\beta > 1+ \frac{1}{c}$ and denote $E_{\beta} = (0, \beta - 1 - \frac{1}{c})$. 
   For $\varepsilon \in E_{\beta}$, 
   \begin{eqnarray*}
  \P (\Nb \geq 1) & = & 1- \P (\Nb = 0 )\\
  & = & 1 - \prod_{j=1}^n\limits (1-P_n^j).
 \end{eqnarray*}
  Lemma \ref{LemE2maj} ensures that, for $n$ large enough,
  \begin{eqnarray*}
  \P (\Nb \geq 1) & \leq &  1- (1-Kn^{\frac{1}{c}+\varepsilon - \beta})^n\\
  & \leq & \frac{3}{2} K n^{1+\frac{1}{c}+\varepsilon - \beta}.
  \end{eqnarray*} 
Thus, $\lim_{n \rightarrow +\infty}\limits \P (\Nb \geq 1) =0$, which implies that
$\Nb$ tends to $0$ in probability when $n$ tends to infinity. There exists a subsequence $\sigma(n)$ such that 
  $N_{\sigma(n)}^{\varepsilon}(\beta)$ tends to $0$ almost surely.
    As a consequence, for all $\beta > 1 + \frac{1}{c}$ and all $\varepsilon \in E_{\beta}$, almost surely, 
  \begin{equation}\label{voidspectrum1}
   \liminf_{n \rightarrow +\infty}\limits \frac{\log \Nb}{\log n} =-\infty.
  \end{equation}
  We conclude as in the last part of the proof of Lemma \ref{LemEncAcc1}.
  \end{proof}

\section*{Appendix}\label{sec:annex}
The following result is due to R. Schelling \cite{RS}:
\begin{lem}
For all $\beta >0$, 
\begin{equation}\label{todo1}
 F(\beta) = -\int_0^\infty \eta^\beta \cos(\eta) \widehat \varphi(\eta) d\eta >0.
\end{equation}
\end{lem}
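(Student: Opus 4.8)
The plan is to convert the oscillatory integral defining $F(\beta)$ into an absolutely convergent, manifestly positive fractional integral of $\varphi$. Throughout I would use that, $\varphi$ being even, $\widehat{\varphi}$ is real and even, and that, $\varphi$ being $\mathcal{C}^{4}$ with support in $[-1,1]$, four integrations by parts give $\widehat{\varphi}(\eta)=O((1+|\eta|)^{-4})$; in particular the integral defining $F(\beta)$ converges absolutely for every $\beta\in(0,3)$, which covers the range $[c,d]\subset(1,2)$ that is the only one entering the main text. By evenness of $\eta\mapsto|\eta|^{\beta}\widehat{\varphi}(\eta)$ I would first write $F(\beta)=-\tfrac12\int_{\mathbb{R}}|\eta|^{\beta}\cos\eta\,\widehat{\varphi}(\eta)\,d\eta$.

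Next I would insert $\widehat{\varphi}(\eta)=\int_{-1}^{1}\varphi(y)\cos(y\eta)\,dy$, interchange the two integrations, and use $\cos\eta\cos(y\eta)=\tfrac12[\cos((1-y)\eta)+\cos((1+y)\eta)]$ together with the classical Mellin transform of the cosine,
$$\int_{0}^{\infty}\eta^{\beta}\cos(a\eta)\,d\eta=-\Gamma(\beta+1)\sin\!\Big(\frac{\pi\beta}{2}\Big)a^{-\beta-1}\qquad(a>0),$$
which holds as an ordinary integral for $\beta\in(-1,0)$ and extends to $\beta\in(0,2)$ as the Hadamard finite part --- that is, as $\lim_{\varepsilon\to0^{+}}\int_{0}^{\infty}\eta^{\beta}e^{-\varepsilon\eta^{2}}\cos(a\eta)\,d\eta$. (Equivalently, this step amounts to recognising that $F(\beta)=-\pi\,(-\Delta)^{\beta/2}\varphi(1)$, where $(-\Delta)^{\beta/2}$ is the one-dimensional fractional Laplacian, and applying its pointwise singular-integral representation, valid for $\beta\in(0,2)$.) Combining the $(1-y)$ and $(1+y)$ contributions via the substitution $y\mapsto-y$ and the evenness of $\varphi$, I expect to reach the closed form
$$F(\beta)=\Gamma(\beta+1)\,\sin\!\Big(\frac{\pi\beta}{2}\Big)\int_{-1}^{1}\frac{\varphi(y)}{(1-y)^{\beta+1}}\,dy.$$

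It then remains to read off positivity. For $\beta\in(0,2)$ one has $\Gamma(\beta+1)>0$ and $\sin(\pi\beta/2)>0$. The integral on the right is finite: $\varphi$ and $\varphi'$ vanish on $(1,\infty)$, so by continuity $\varphi(1)=\varphi'(1)=0$, whence $\varphi(y)=O((1-y)^{2})$ as $y\to1^{-}$ and $(1-y)^{1-\beta}$ is integrable because $\beta<2$; and it is strictly positive because $\varphi\ge0$, $\varphi\not\equiv0$, and the kernel $(1-y)^{-\beta-1}$ is positive on $(-1,1)$. Hence $F(\beta)>0$ for every $\beta\in(0,2)$, and in particular on $[c,d]$, which is all that is needed in the proof of Lemma~\ref{LemE1min}.

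The main obstacle is the middle step: since $\eta^{\beta}\cos(a\eta)$ does not converge even conditionally for $\beta>0$, one must give $\int_{0}^{\infty}\eta^{\beta}\cos(a\eta)\,d\eta$ a precise meaning and justify the interchange of integrations. The cleanest route is to keep the Gaussian regulariser $e^{-\varepsilon\eta^{2}}$ throughout, apply Fubini for each fixed $\varepsilon>0$, and then let $\varepsilon\to0^{+}$ by dominated convergence; the uniform-in-$\varepsilon$ domination is available because $\beta-4<-1$ controls the tail in $\eta$, and because $\varphi(y)(1-y)^{-\beta-1}$ is integrable near $y=1$, so that the finite part indeed coincides with an ordinary integral there.
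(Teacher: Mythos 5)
Your argument is correct in substance and reaches exactly the paper's conclusion, but by a genuinely different route. The paper never touches a divergent integral: it writes $|\eta|^{\beta}=\int_{y\neq 0}(1-\cos(y\eta))\,\nu_{\beta}(dy)$ with $\nu_{\beta}(dy)=c_{\beta}|y|^{-1-\beta}dy$ (L\'evy--Khintchine, which is also why its proof, like yours, really covers only $\beta\in(0,2)$ --- enough for $[c,d]$), applies Fubini, which is legitimate because $\int\!\!\int(1-\cos(y\eta))|\widehat\varphi(\eta)|\,\nu_{\beta}(dy)\,d\eta=\int|\eta|^{\beta}|\widehat\varphi(\eta)|\,d\eta<\infty$, and then uses Fourier inversion $\int_{0}^{\infty}\cos(a\eta)\widehat\varphi(\eta)\,d\eta=\pi\varphi(a)$ together with $\varphi(1)=0$ to land on $F(\beta)=\tfrac{\pi c_{\beta}}{2}\int_{y\neq0}\bigl(\varphi(1+y)+\varphi(1-y)\bigr)|y|^{-1-\beta}dy>0$. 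After the change of variables $u=1\mp y$ and evenness of $\varphi$, this is precisely your closed form, since $\pi c_{\beta}=\Gamma(\beta+1)\sin(\pi\beta/2)$: both proofs exhibit $F(\beta)$ as (a positive multiple of) the fractional Laplacian of $\varphi$ at the point $1$, as you observe. What the paper's order of operations buys is absolute convergence throughout, so no finite part, no regulariser, no limit interchange; what yours buys is the explicit constant, which is not needed anywhere in the paper.

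The place where your write-up is only a sketch is the step you yourself flag. The domination you invoke ($\beta-4<-1$ for the $\eta$-tail, integrability of $\varphi(y)(1-y)^{-\beta-1}$ near $y=1$) controls the left-hand side and the limiting integral, but not what is actually required to pass $\varepsilon\to0^{+}$ inside the $y$-integral: namely (i) the pointwise convergence of $I_{\varepsilon}(a)=\int_{0}^{\infty}\eta^{\beta}e^{-\varepsilon\eta^{2}}\cos(a\eta)\,d\eta$ to $-\Gamma(\beta+1)\sin(\pi\beta/2)\,a^{-\beta-1}$ for each fixed $a>0$, and (ii) a bound on $I_{\varepsilon}(a)$ that is uniform in $\varepsilon$ and still integrable against $\varphi(y)\,dy$ as $a=1\mp y\to0$. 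Both can be supplied, e.g.\ via the scaling $I_{\varepsilon}(a)=a^{-\beta-1}J_{\varepsilon/a^{2}}$ with $J_{\delta}=\int_{0}^{\infty}u^{\beta}e^{-\delta u^{2}}\cos u\,du$, checking that $\delta\mapsto J_{\delta}$ is bounded on $(0,\infty)$ and converges as $\delta\to0^{+}$ to the stated constant (contour rotation or confluent-hypergeometric asymptotics); with that, your $\varphi(y)=O((1-y)^{2})$ observation closes the argument for $\beta\in(0,2)$. As written, though, this hardest step is asserted rather than proved --- and it is exactly the work the paper's L\'evy--Khintchine decomposition is designed to avoid.
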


\begin{proof}
The L\'evy--Khintchine formula yields
$$
    |\eta|^\beta = \int_{y\neq 0} (1-\cos (y\eta))\nu_\beta(dy)
    \quad\text{with}\quad
    \nu_\beta(dy) = \frac{c_\beta\,dy}{|y|^{1+\beta}}.
$$

By Fubini's theorem,
\begin{align*}
    \int_0^\infty &\eta^\beta \cos(\eta) \widehat\varphi(\eta)\,d\eta\\
    &= \int_{y\neq 0} \int_0^\infty  (\cos(\eta) - \cos(y\eta) \cos(\eta)) \widehat\varphi(\eta)\,d\eta\,\nu_\beta(dy)\\
    &= \int_{y\neq 0} \int_0^\infty  \left(\cos(\eta) - \tfrac 12 \cos (1+y)\eta - \tfrac 12\cos(1-y)\eta\right) \widehat\varphi(\eta)\,d\eta\,\nu_\beta(dy) \\
    & = c \int_{y\neq 0} \left(\varphi(1) - \tfrac 12 \varphi(1+y) - \tfrac 12\varphi(1-y)\right) \nu_\beta(dy),
\end{align*}
where $c$ is a positive constant.
By definition, $\varphi$ is smooth and supported on$[-1,1]$, thus $\varphi(1)=0$. As a consequence, we find that 
$$\int_0^\infty \eta^\beta \cos(\eta) \widehat\varphi(\eta)\,d\eta = - \frac{c}{2} \int_{y\neq 0} \left(\varphi(1+y) + \varphi(1-y)\right) \nu_\beta(dy)<0.$$ 
This is inequality \eqref{todo1}.
\end{proof}
It is easy to see that the function $\beta \mapsto F(\beta)$ is continuous. As a consequence, 
$\min_{\delta \in [c,d]} F(\delta) >0$.

\end{document}